\numberwithin{equation}{section}
\providecommand{\highlight}[1]{{\color{red}#1}}
         \providecommand{\changes}[1]{
           \ifthenelse{\boolean{showchanges}}{{\highlight{#1}}}{#1}
         }
 \providecommand{\newchanges}[1]{
           \ifthenelse{\boolean{newchanges}}{{\highlight{#1}}}{#1}
         }
         \providecommand{\changefromto}[3][replace with]{
           \ifthenelse{\boolean{showchanges}}
           {{\sout{#2}\margnote{#1}}{\highlight{#3}}}
           {#3\xspace}
         }
         \providecommand{\ChangePar}[2]{
           \ifthenelse{\boolean{showchanges}}
           {{\par$\mapsfrom$ \textcolor{red!20}{#1}}{\par$\mapsto$ \textcolor{blue}{#2}}}
           {\par #2}
         }
         \providecommand{\InsertPar}[1]{
           \ifthenelse{\boolean{showchanges}}
           {{\par$\mapsto$ \textcolor{blue}{#1}}}
           {\par #1}
         }
\newcommand{\delete}[1]{
  \ifthenelse{\boolean{showdelete}} {{\color{red}{#1}}}{}
}
\numberwithin{equation}{section}
\newtheorem{The}[subsection]{Theorem}
\newtheorem{Lem}[subsection]{Lemma}
\newtheorem{Prop}[subsection]{Proposition}
\theoremstyle{note}
\newtheorem{Rem}[subsection]{Remark}
\newtheorem{Defn}[subsection]{Definition}
\newtheorem{Pbm}[subsection]{Problem}
\numberwithin{equation}{section}
\newcommand{\Program}[1]{\textsf{#1}\xspace}
\newcommand{\ALBERTA}{\Program{ALBERTA}}
\newcommand{\PARAVIEW}{\Program{PARAVIEW}}
\renewcommand{\vec}[1]{\ensuremath{\boldsymbol{#1}}}
\newcommand{\myall}{\ensuremath{\quad \mbox{ for all }}}
\newcommand{\pdt}{\ensuremath{\partial_t}}
\newcommand{\pdtau}{\ensuremath{\bar{\partial}_\tau}}
\newcommand{\Reals}{\ensuremath{{\mathbb{R}}}}
\newcommand{\diff}{\ensuremath{{\operatorname{d}}}}
\newcommand{\dd}{\mathop{}\, \diff}
\renewcommand{\O}{\ensuremath{{\Omega}}}
\newcommand{\normal}{\ensuremath{{\vec{\nu}}}}
\newcommand{\G}{\ensuremath{{\Gamma}}}
\newcommand{\Gc}{\ensuremath{{\G_h}}}
\newcommand{\Lp}[1]{\ensuremath{{L}^{#1}}}
\newcommand{\Hil}[1]{\ensuremath{{H}^{#1}}}
\newcommand{\dual}[1]{\ensuremath{\left({#1}\right)^\prime}}
\newcommand{\Sc}{\ensuremath{{\mathbb{S}_h}}}
\newcommand{\Vc}[1]{\ensuremath{{\mathbb{V}_h^{#1}}}}
\newcommand{\Kc}[1]{\ensuremath{{\mathbb{K}_h^{#1}}}}
\newcommand{\dualp}[4]{\ensuremath{{}_{#1}\left\langle#2,#3\right\rangle}_{#4}}
\newcommand{\ltwon}[2]{\ensuremath{\left\|#1\right\|}_{\Lp{2}\left(#2\right)}}
\newcommand{\Lpn}[3]{\ensuremath{\left\|#2\right\|}_{\Lp{#1}(#3)}}
\newcommand{\lv}{\ensuremath{\left\vert}}
\newcommand{\rv}{\ensuremath{\right\vert}}
\newcommand{\lap}{\ensuremath{\Updelta}}
\newcommand{\T}{\ensuremath{{\mathcal{T}}}}
\newcommand{\Clement}{\ensuremath{{\mathcal{I}^h}}}
\newcommand{\Lagrange}{\ensuremath{{\Lambda^h}}}
\definecolor{MyGreen}{rgb} {0.05,0.4,0.05}
\definecolor{RedViolet}{rgb} {0.1,0.1,0.75}
          \providecommand{\highlight}[1]{{\color{blue}#1}}
          \newcommand{\standout}[1]{\colorbox{red}{\textcolor{white}{#1}}}
          \newcounter{margnote}[page]
          \newcommand{\margnotemark}{{\standout{\footnotesize\upshape\texttt{\arabic{margnote}}}}}
          \newcommand{\margnote}[2][]{
            \ifthenelse{
              \boolean{shownotes}
            }{\stepcounter{margnote}\margnotemark\marginpar{
                \texttt{
                  \begin{minipage}{2cm}
                    \raggedright\tiny
                    \margnotemark{#1}:
                    #2
                  \end{minipage}
            }}}{}
          }
          \providecommand{\chcolor}{\color{blue}}
          \providecommand{\changes}[1]{
            \ifthenelse{\boolean{showchanges}}
                     {{\chcolor #1}}
                     {#1
          }
          }
          \providecommand{\changes}[1]{
            \ifthenelse{\boolean{showchanges}}{{\highlight{#1}}}{#1}
          }
          \providecommand{\changefromto}[3][replace with]{
            \ifthenelse{\boolean{showchanges}}
            {{\sout{#2}\margnote{#1}}{\highlight{#3}}}
            {#3\xspace}
          }
          \providecommand{\ChangePar}[2]{
            \ifthenelse{\boolean{showchanges}}
            {{\par$\mapsfrom$ \textcolor{red!20}{#1}}{\par$\mapsto$ \textcolor{blue}{#2}}}
            {\par #2}
          }
          \providecommand{\InsertPar}[1]{
            \ifthenelse{\boolean{showchanges}}
            {{\par$\mapsto$ \textcolor{blue}{#1}}}
            {\par #1}
         }
\newcommand \beq{\begin{equation}}
\newcommand \eeq{\end{equation}}
\def\eps{{\varepsilon}}
\def\Du{{\mathbb{D}u}}
\def\bpmat{\begin{pmatrix}}
\def\epmat{\end{pmatrix}}
\def\mathref#1{\ifmmode\mathrm{(\ref{#1})}\else{\rm(\ref{#1})}\fi}
\def\nref#1{\ifmmode\mathrm{\ref{#1}}\else{\rm\ref{#1}}\fi}
\newcommand\ubar{\bar{u}}
\newcommand\vbar{\bar{v}}
\def\tu{{\tilde{u}}}
\def\tw{{\tilde{w}}}
\def\tv{{\tilde{v}}}
\def\tz{{\tilde{z}}}
\def\hu{{\hat{u}}}
\def\hw{{\hat{w}}}
\def\hv{{\hat{v}}}
\def\hz{{\hat{z}}}
\newcommand{\delO}{\ensuremath{\delta_\Omega}}
\newcommand{\delG}{\ensuremath{\delta_\Gamma}}
\newcommand{\delk}{\ensuremath{\delta_k}}
\newcommand{\sgn}{\mathrm{sgn}}
\begin{document}
\bibliographystyle{abbrvnat}
\title{Coupled bulk-surface free boundary problems arising from a mathematical model of receptor-ligand dynamics}%
\author{%
  Charles M.\ Elliott}
  \address[C.~M.~Elliott]{Mathematics Institute, Zeeman Building, University of Warwick, Coventry, UK, CV4 7AL.}
  \email[C.~M.~Elliott]{C.M.Elliott@warwick.ac.uk}
  \author{Thomas Ranner}
    \address[T.~Ranner]{School of Computing, E.C. Stoner Building, University of Leeds, Leeds, UK. LS2 9JT.}
  \email[T.~Ranner]{T.Ranner@leeds.ac.uk}
  \author{Chandrasekhar Venkataraman}
    \address[C.~Venkataraman]{Mathematical Institute, North Haugh, University of St Andrews, Fife, UK. KY16 9SS.}
  \email[C.~Venkataraman]{cv28@st-andrews.ac.uk}

\maketitle


\begin{abstract}
We consider a coupled bulk-surface system of partial differential equations with nonlinear coupling modelling receptor-ligand dynamics. The model arises 
as a simplification of a mathematical model for the reaction between cell surface resident receptors and ligands present in the extra-cellular medium.
We prove the existence and uniqueness of solutions. We also consider a number of biologically relevant asymptotic limits of the model. We prove 
convergence to limiting problems which take the form of free boundary problems posed on the cell surface. We also report on numerical simulations 
illustrating convergence to one of the limiting problems as well as the spatio-temporal distributions of the receptors and ligands in a realistic geometry.
\end{abstract}

\pagestyle{myheadings}
\thispagestyle{plain}
\markboth{C.M. Elliott, T. Ranner and C. Venkataraman}{Coupled bulk-surface free boundary problems arising from a mathematical model of receptor-ligand dynamics}

\section{Introduction}\label{sec:intro}
We start by outlining the mathematical model for receptor-ligand dynamics whose analysis and asymptotic limits will be the main 
focus of this work.
Let $\Gamma$ be a smooth, compact closed $n$-dimensional hypersurface contained in  {the interior of a simply 
connected} domain $D \subset \Reals^{n+1}$, $n=1,2$.
The surface $\Gamma$ separates the domain $D$ into an interior domain $I$ and an exterior domain $\Omega$. We will denote 
by $\partial_0 \Omega$ the outer boundary of $\Omega$, i.e.\ the boundary $\partial D$. {The vectors $\normal$ and 
$\normal_\Omega$ denote the outward pointing unit normals to $\O$ on $\G$ and $\partial_0\O$ respectively.
 Fig.~\ref{fig:domain} shows a cartoon sketch of the setup.
We  assume that the outer boundary $\partial_0 \Omega$  is Lipschitz.
We consider the following problem: Find $u \colon \bar{\Omega} \times [0,T) \to \mathbb{R}^+$ and $w \colon \Gamma \times [0,T) \to \mathbb{R}^+$ such that
\begin{subequations}
  \label{eq:eps-problem}
  \begin{align}
    \delO \partial_{{t}} {u} - \Delta {u} & = 0 && \mbox{ in } \Omega\times(0,T)  \\
    \nabla {u} \cdot \vec\nu & = - \frac{1}{\delk} {u} {w} && \mbox{ on } \Gamma\times(0,T)  \\
    {u} = {u}_D & \mbox{ or } \nabla {u} \cdot \vec{\nu}_{\Omega} = 0 && \mbox{ on } \partial_0 \Omega\times(0,T)  \\
    \partial_{{t}} {w} - \delG \Delta_{\Gamma} {w} & =  \nabla {u} \cdot \vec\nu && \mbox{ on } \Gamma\times(0,T)  \\
    u(\cdot,0)&=u^0(\cdot) && \mbox{ in } \O \\
    w(\cdot,0)&=w^0(\cdot) && \mbox{ on } \G ,
  \end{align}
\end{subequations}
}
where $\delO, \delG,\delk >0$ are given model parameters and the initial data are bounded, non-negative 
functions, i.e., $u^0\in\Lp{\infty}(\O)$, $w^0\in\Lp{\infty}(\G)$ and $u^0,w^0\geq 0$. In the above $\lap_\G$ denotes 
the Laplace-Beltrami operator on the surface $\G$ and $\lap$ the usual Cartesian Laplacian in $\Reals^{n+1}$. 

We will use either Dirichlet or Neumann boundary conditions on $\partial_0 \Omega$.
For the Dirichlet case, we assume that the Dirichlet boundary data $u_D$ is a positive scalar constant. Our analysis remains valid 
if we consider bounded positive functions
for the Dirichlet boundary data, we restrict the discussion to positive scalar boundary data for the sake of simplicity. The restriction 
to non-negative solutions is made since we are interested in biological problems where $u$ and $w$ represent chemical concentrations and hence are non-negative.

\begin{figure}
  \centering
  \begin{center}
    \includegraphics{./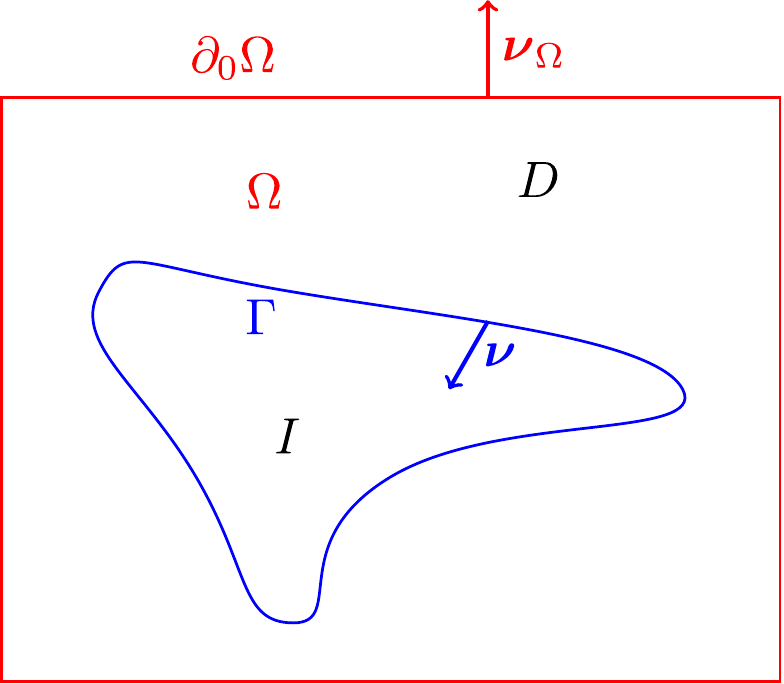}
  \end{center}
  \caption{A sketch of the cell membrane $\G$ and the extra-cellular medium $\O$.}
  \label{fig:domain}
\end{figure}

Problem (\ref{eq:eps-problem})  may be regarded as a basic model for receptor-ligand dynamics in cell biology,
modelling the dynamics of mobile cell surface receptors reacting with a mobile bulk ligand, which is a reduction of the 
model (\ref{eq:k-problem}) presented in \S \ref{sec:motivation-limit-problems}. 
Receptor-ligand interactions and the associated cascades of activation of signalling molecules, so called signalling cascades,  
are the primary mechanism by which cells sense and respond to their environment. Such processes therefore constitute a 
fundamental part of many basic phenomena in cell biology such as proliferation, motility, the maintenance of structure or form, 
adhesion, cellular signalling, etc. \citep{bongrand1999ligand,hynes1992integrins,locksley2001tnf}. Due to the complexity of the biochemistry 
involved in signalling networks, an integrated approach combining theoretical and computational  mathematical studies with experimental and modelling efforts appears necessary. 
Motivated by this need, in this work we focus on understanding a mathematical reduction of  theoretical models for receptor-ligand dynamics in cell 
biology consisting of a coupled system of bulk-surface partial differential equations (PDEs). 

A number of recent theoretical  and computational studies of receptor-ligand interactions, \citep[e.g.,][]{marciniak2008derivation, garcia2013mathematical}, 
employ models which are similar in structure to those considered in this work. Models with similar features arise in the modelling of signalling networks 
coupling the dynamics of ligands  within the cell (e.g., G-proteins) with those on the cell surface {\citep{levine2005membrane, jilkine2007mathematical,mori2008wave,ratz2012turing,ratz2013symmetry,MadChuVen14,bao2014well,morgan2015global}}. The ability of cells to 
create their own chemotactic gradients, i.e., to influence the bulk ligand field,  has been conjectured to play a crucial role in collective directed migration 
for example during neural crest formation {\citep{mclennan2012multiscale,mclennan2015vegf,mclennan2015neural}} and hence 
understanding such models is of much biological importance.

Through proving well-posedness results, this work gives a mathematically sound foundation for the use and simulation of coupled bulk-surface models for receptor-ligand dynamics. 
Moreover, we justify the consideration of various small parameter asymptotic limits of such models, through non-dimensionalisation using experimentally measured parameter values. 
We provide a rigorous derivation of the limiting problems and discuss their well-posedness. We also discuss the numerical solution of the original and limiting problems 
illustrating the asymptotic convergence together with robust and efficient methods for their approximation.  This work suggests that models for receptor-ligand dynamics 
featuring fast reaction kinetics can be derived using classical elements of free boundary methodology as components of the modelling.

Whilst our focus is on receptor-ligand dynamics, problems of a similar structure arise in fields such as ecology where one considers populations consisting of two or more 
competing species \citep{holmes1994partial}. Such a scenario can be modelled by so-called spatial segregation models and the corresponding asymptotic limits have been the subject 
of much mathematical study, \citep[e.g.,][]{conti2005asymptotic,crooks2004spatial,dancer1999spatial}.
Further details on the cell-biological motivation for studying (\ref{eq:eps-problem}), together with the limits $\delO,\delG,\delk\to0$, is given in \S \ref{sec:motivation-limit-problems}.

The main focus of this work is to show the system of partial differential equations (\ref{eq:eps-problem}) is well posed and so is meaningful from the 
mathematical perspective and,
furthermore, to obtain reduced models as limits of this system  as we send the parameters $\delO,\delG$ and $\delk$ to zero.  Specifically, we establish 
existence and 
uniqueness of a solution to (\ref{eq:eps-problem}) and show that in the limits $\delk\to0$, $\delO,\delG>0$ fixed, $\delG=\delk\to0$, $\delO>0$ fixed, 
$\delO=\delG=\delk\to0$, this solution to (\ref{eq:eps-problem}) converges to a solution of suitably defined limit problems. Furthermore, in the latter two 
cases, $\delG=\delk\to0$ and $\delO=\delG=\delk\to0$, the uniqueness of the solution to the limit problems, respectively constrained parabolic and elliptic  
problems with dynamic boundary conditions, is also shown. We then show that the limit problems with dynamic boundary conditions may be reformulated as 
variational inequalities and briefly explore some connections with classical free boundary problems. These reduced models in the form of free boundary 
problems may be considered as models in their own right and offer simplifications with respect to numerical computation.

That the fast reaction limit ($\delk\to0$) leads to interesting free boundary problems is  because of the complementarity nature of the resulting limit
\begin{align*}
 {u}\ge 0, && w\ge 0, && {u} {w} =0 && \mbox{ on } \Gamma.
\end{align*}
Such limits have been considered for  coupled systems of parabolic equations (posed in the same domain) in a number of previous 
works  \citep[e.g.,][]{evans1980ctc,bothe2001instantaneous,bothe2012instantaneous} with the limiting problem corresponding to a 
Stefan problem  \citep{hilhorst1996fast,hilhorst2001competition,hilhorst2003vanishing}. 
Here in this paper the main complication in the analysis  is that the species reside in different domains and the coupling is on the boundary of the bulk domain 
which results in added technical complications in passing to the limit.

For the limit problems $\delG=\delk=0$ and $\delG=\delk=\delO=0$ with dynamic boundary conditions, we obtain Stefan and Hele-Shaw type problems 
on the  hypersurface $\Gamma$  with a differential operator, which may be interpreted as a non-local fractional differential operator, obtained by using the 
Dirichlet to Neumann map for the bulk parabolic and elliptic operators. This leads to an interesting variational inequality reformulation  in the  case of the limit bulk 
elliptic equation consisting of  a boundary obstacle problem  that is satisfied by the integral in time of the solution. The approach follows that employed for the 
reformulation of the one-phase Stefan problem and the Hele-Shaw problem  for which the transformed variable (integral in time of the solution) satisfies a 
parabolic \citep{duvaut1973resolution} or elliptic \citep{ell80, elliott1981variational} variational inequality respectively.  

Problems related to those considered in this work have been the focus of recent studies.
For example, { \citet{morgan2015global} consider coupled bulk-surface systems of parabolic equations with nonlinear coupling in which the surface 
resident species are defined on the boundary of the bulk domain.  They derive sufficient conditions  on the coupling to ensure global existence of classical solutions 
extending the results of \citet{pierre2010global} , from the planar case to the coupled bulk-surface case.}
 \citet{2013arXiv1302.5026S} consider the well posedness of singular heat equation with dynamic boundary conditions of reactive-diffusive type (i.e., including 
 the Laplace-Beltrami of the trace of the solution on the boundary).
\citet{bao2014well} consider a reaction-diffusion equation in a bulk domain coupled to a reaction-diffusion equation posed on the boundary.
They prove existence and uniqueness of a weak solution to the problem and establish exponential convergence to equilibrium. 
\citet{vazquez2008heat,vazquez2009laplace,vazquez2011heat} study the well posedness of the Laplace and heat equations with dynamic boundary conditions of 
reactive- and reactive-diffusive type. The heat equation with nonlinear dynamic Neumann boundary conditions which arises in problems of boundary 
heat control is considered by  \citet{athanasopoulos2010continuity}. 
The authors  prove continuity of the solution and furthermore, they extend their results to the case where the heat operator in the interior is replaced 
with a fractional diffusion operator.  Existence  and uniqueness of weak solutions to Hele-Shaw problems which are  Stefan-type free boundary problems with  
vanishing specific heat    are considered by  \citet{crowley1979weak}. Elliptic equations with non-smooth nonlinear dynamic boundary conditions have been studied in 
a number of applications. \citet{aitchison1984model} propose a simplified model for an electropaint process that consists of an elliptic equation with nonlinear 
dynamic boundary conditions involving the normal derivative. The authors  formally derive the steady state stationary problem which consists of a Signorini 
problem similar to the elliptic variational inequality we derive in \S \ref{sec:var-ineq}. 
 This problem is studied by \citet{caffarelli1985nonlinear} where the authors prove that the steady state solution ($t\to\infty$) of  an implicit time discretisation 
 solves the Signorini problem proposed as the formal limit by \citet{aitchison1984model}. A similar problem, which models percolation in gently sloping beaches, 
 that  consists of an elliptic equation variational inequality with dynamic boundary conditions  involving the normal derivative is proposed and 
 analysed by \citet{aitchison1983percolation,elliott1985analysis,colli1987nonlinear}. \citet{perthame2014hele} 
 derive Hele-Shaw type free boundary problems as limits of models for tumour growth.  Finally we mention the work of \citet{nochetto2014convergence} who 
 consider the numerical approximation of obstacle problems, in particular, they prove optimal convergence rates for the thin obstacle (Signorini) problem and 
 prove quasi-optimal convergence rates for the approximation of the obstacle problem for the fractional Laplacian.
 
Our main results are stated in Theorems \ref{thm:eps-problem}, \ref{thm:parab-pbm}, \ref{thm:PdBC-pbm} and \ref{thm:EdBC-pbm}.
\begin{itemize}
\item In Theorem \ref{thm:eps-problem} we establish the existence of a unique, bounded solution to (\ref{eq:eps-problem}).
\item In Theorem \ref{thm:parab-pbm} we present a rigorous derivation that in the limit $\delk\to0$, $\delO,\delG>0$ fixed, 
the solution to (\ref{eq:eps-problem}) converges to a solution of a system of constrained coupled bulk-surface parabolic equations (c.f., (\ref{eq:parabolic-problem})).
\item In Theorem \ref{thm:PdBC-pbm} we present a rigorous derivation that in the limit $\delG=\delk\to0$, with $\delO>0$ fixed, 
the solution to (\ref{eq:eps-problem}) converges to the unique solution of constrained parabolic problem with dynamic boundary condition (c.f., (\ref{eq:PdBC-limit-problem})).
\item In Theorem \ref{thm:EdBC-pbm} we present a rigorous derivation that in the limit $\delO=\delG=\delk\to0$, the solution to (\ref{eq:eps-problem}) 
converges to the unique solution of constrained elliptic problem with dynamic boundary condition (c.f., (\ref{eq:EdBC-limit-problem})).
\end{itemize}

We conclude the paper by providing some numerical experiments employing a coupled bulk-surface finite element method where we support 
numerically the theoretical convergence results to a limiting problem  and investigate the resulting free boundary problem on a surface.

\section{Biological motivation}\label{sec:motivation-limit-problems}

We now present a  model for receptor-ligand dynamics and justify, through non-dimensionalisation of the model using parameter values 
previously measured in experimental studies, the simplifications and limiting problems considered in this work. 

{
We start with the following model, that corresponds to one of the models presented by \citet{garcia2013mathematical} if one 
neglects the terms involving internalisation of receptors and complexes.
}
 The reaction under consideration is between mobile receptors that reside on the  cell surface with ligands present in the extra-cellular 
 medium (the bulk region surrounding the cell).  We assume a single species of mobile surface (cell membrane) resident receptor whose 
 concentration (surface density) is denoted by $c_r$ and a single species of bulk resident diffusible ligand whose concentration (bulk concentration) 
 is denoted by $c_L$. The receptor and ligand react reversibly on the surface to form a (surface resident, mobile) receptor-ligand complex, whose 
 concentration is denoted by $c_{rl}$. The kinetic constants $k_{\text{on}}$ and $k_{\text{off}}$ represent the forward and reverse reaction rates. 
 Denoting by $\G$ the cell surface and by $\O$ the extra-cellular medium with outer boundary $\partial_0\O$ (c.f., Fig.~\ref{fig:domain}), we have in mind models of the following form,
 {
\begin{subequations}
  \label{eq:k-problem}
  \begin{align}
    \pdt c_L - D_L \lap c_L & =0\quad && \text{ in }\O\times(0,T)\\
    D_L\nabla c_L\cdot \normal & =-k_{\text{on}}c_L c_r+k_{\text{off}}c_{rl} && \text{ on }\G\times(0,T)\\
    c_L=c_D & \mbox{ or } D_L \nabla c_L \cdot \vec{\nu}_\Omega = 0 && \text{ on }\partial_0\O\times(0,T)\\
    \pdt c_r- D_r\lap_\G c_r & =-k_{\text{on}}c_L c_r+k_{\text{off}}c_{rl} && \text{ on }\G\times(0,T)\\
    \pdt c_{rl}- D_{rl}\lap_\G c_{rl}& =k_{\text{on}}c_L c_r-k_{\text{off}}c_{rl} && \text{ on }\G\times(0,T).
  \end{align}
\end{subequations}%
}%
The model is closed by suitable (bounded, non-negative) initial conditions.  For the outer boundary condition we take either a 
Dirichlet or a Neumann boundary condition.
The Dirichlet boundary condition, with $c_D>0$ a positive constant, arises under the modelling assumption that the 
background concentration of ligands sufficiently far away from the cell is uniform.
Alternatively, the Neumann boundary condition arises from assuming zero flux across $\partial_0 \Omega$.

\subsection{Non-dimensionalisation and limit problems}\label{sec:limit-model}

{%
We are interested in different limit problems arising from model \eqref{eq:k-problem} for ligand-receptor binding.
To simplify notation, we write the unknowns as $u = c_L, w = c_r$ and $\chi = c_{rl}$ and the parameters 
$D_\Omega = D_L$, $D_\Gamma = D_r = D_{rl}$, $k = k_{\text{on}}$, $k_{-1} = k_{\text{off}}$.

The first problem we consider is to find $u \colon \Omega \times [0,T) \to \mathbb{R}$ and $w, \chi \colon \Gamma \times [0,T) \to \mathbb{R}$ such that
\begin{subequations}
  \label{eq:full-pb}
  \begin{align}
    \partial_t u - D_\Omega \Delta u & = 0 && \mbox{ in } \Omega \times (0,T) \\
    D_\Omega \nabla u \cdot \vec{\nu} & = - k u w + k_{-1} \chi && \mbox{ on } \Gamma \times (0,T)  \\
    u = u_D & \mbox{ or } D_\Omega \nabla u \cdot \vec{\nu}_\Omega = 0 && \mbox{ on } \partial_0 \Omega  \times (0,T) \\
    \partial_t w - D_\Gamma \Delta_\Gamma w & = D_\Omega \nabla u \cdot \vec\nu && \mbox{ on } \Gamma  \times (0,T) \\
    \partial_t \chi - D_\Gamma \Delta_\Gamma \chi & = -D_\Omega \nabla u \cdot \vec\nu && \mbox{ on } \Gamma  \times (0,T) \\
    u(\cdot,0)&=u^0(\cdot) && \mbox{ in } \O \\
    w(\cdot,0)&=w^0(\cdot) && \mbox{ on } \G \\
    \chi(\cdot,0)&=\chi^0(\cdot) && \mbox{ on } \G.
  \end{align}
\end{subequations}

In order to determine the sizes of each coefficient, we take the following rescaling. We set
\begin{align*}
  \tilde{x} = x / L, \quad
  \tilde{t} = t / S, \quad
  \tilde{u} = u / U, \quad
  \tilde{w} = w / W, \quad
  \tilde{\chi} = \chi / X,
\end{align*}
where $L$ is a length scale, $S$ is a time scale, $U, W$ and $X$ are typical concentrations for $u, w$ and $\chi$ respectively.

Applying the chain rule, this leads to a non-dimensional form of (\ref{eq:full-pb}):

\begin{subequations}
  \label{eq:nondim-pb}
  \begin{align}
    \delta_\Omega \partial_{\tilde{t}} \tilde{u} - \Delta \tilde{u} & = 0 && \mbox{ in } \tilde\Omega \times ( 0, \tilde{T} ) \\
    \nabla \tilde{u} \cdot \vec\nu & = - \frac{1}{\delta_k} \tilde{u} \tilde{w} + \delta_{\chi} \tilde{\chi} && \mbox{ on } \tilde\Gamma \times ( 0, \tilde{T} ) \\
    \tilde{u} = \tilde{u}_D & \mbox{ or } \nabla \tilde{u} \cdot \vec{\nu}_{\tilde\Omega} = 0 && \mbox{ on } \partial_0 \tilde\Omega \times ( 0, \tilde{T} ) \\
    \partial_{\tilde{t}} \tilde{w} - \delta_\Gamma \Delta_{\tilde\Gamma} \tilde{w} & = \mu \nabla \tilde{u} \cdot \vec\nu && \mbox{ on } \tilde\Gamma \times ( 0, \tilde{T} ), \\
    \partial_{\tilde{t}} \tilde{\chi} - \delta_\Gamma \Delta_{\tilde\Gamma} \tilde{\chi} & = -\mu' \nabla \tilde{u} \cdot \vec\nu && \mbox{ on } \tilde\Gamma \times ( 0, \tilde{T} ) \\
    \tilde{u}(\cdot,0)&= \tilde{u}^0(\cdot) := u^0(\cdot) / U && \mbox{ in } \O \\
    \tilde{w}(\cdot,0)&= \tilde{w}^0(\cdot) :=  w^0(\cdot) / W && \mbox{ on } \G \\
    \tilde{\chi}(\cdot,0)&= \tilde{\chi}^0(\cdot) := \chi^0(\cdot) / X && \mbox{ on } \G.
  \end{align}
\end{subequations}
Here we have six non-dimensional coefficients:
\begin{gather*}
  \delta_\Omega = \frac{L^2}{D_\Omega S}, \quad
  \delta_k = \frac{D_\Omega}{k L W}, \quad
  \delta_\chi = \frac{k_{-1} L X}{D_L U}, \quad
  \delta_\Gamma = \frac{D_\Gamma S}{L^2}, \\
  \mu = \frac{D_\Omega S U}{L W}, \quad
  \mu' = \frac{D_\Omega S U}{L X}.
\end{gather*}

Taking values from Table~\ref{tab:parameters}, we infer that
\begin{gather*}
  \delta_\Omega = (5.6 \, \mathrm{s}) \cdot S^{-1}, \quad
  \delta_k = 5.7 \cdot 10^{-2}, \quad
  \delta_\chi = 8.7 \cdot 10^{-2}, \\
  \delta_\Gamma = (1.8 \cdot 10^{-4} \, \mathrm{s}^{-1} ) \cdot S, \quad
  \mu = (5.7 \cdot 10^{-2} \, \mathrm{s}^{-1} ) \cdot S, \quad
  \mu' = (5.7 \cdot 10^{-2} \, \mathrm{s}^{-1} ) \cdot S.
\end{gather*}

\begin{table}
  \centering
  \begin{tabular}{ccc}
    \hline
    Parameter & Value & Source \\
    \hline
    $L$ & $7.5 \cdot 10^{-6} \, \mathrm{m}$ & \citet{garcia2013mathematical} \\
    $U$ & $1.0 \cdot 10^{-3} \, \mathrm{mol} \, \mathrm{m}^{-3}$ & \citet{garcia2013mathematical} \\
    $W$ & $2.3 \cdot 10^{-8} \, \mathrm{mol} \, \mathrm{m}^{-2}$ & \citet{garcia2013mathematical} \\
    $X$ & $2.3 \cdot 10^{-8} \, \mathrm{mol} \, \mathrm{m}^{-2}$ & limited by total receptor concentration \\
    $D_\Omega$ & $1.0 \cdot 10^{-11} \, \mathrm{m}^2 \, \mathrm{s}^{-1}$ & \citet{LINDERMAN1986295} \\
    $D_\Gamma$ & $1.0 \cdot 10^{-15} \, \mathrm{m}^2 \, \mathrm{s}^{-1}$ & \citet{LINDERMAN1986295} \\
    $k_{\text{on}}$ & $1.0 \cdot 10^3 \, \mathrm{m}^3 \, \mathrm{mol}^{-1} \, \mathrm{s}^{-1}$ & \citet{garcia2013mathematical} \\
    $k_{\text{off}}$ & $5.0 \cdot 10^{-3} \mathrm{s}^{-1}$ & \citet{garcia2013mathematical} \\
    \hline
  \end{tabular}

  \caption{Parameters used for rescaling equations. The values for $U$ and $W$ are extreme values taken from within a physical range from \citet{garcia2013mathematical}.}
  \label{tab:parameters}
\end{table}

First, we note that $\delta_\chi \ll 1$. Considering the limit $\delta_\chi \to 0$ by dropping the terms $\delta_\chi \chi$ decouples the equations for $\tilde{u}, \tilde{w}$ from the equation for $\tilde{\chi}$. This results in the problem, which we have written in terms of the original variables:
\begin{subequations}
  \label{eq:nochi-problem}
  \begin{align}
    \partial_t u - \delta_\Omega^{-1} \Delta u & = 0 && \mbox{ in } \Omega \\
    \nabla {u} \cdot \vec\nu & = - \frac{1}{\delk} {u} {w} && \mbox{ on } \Gamma\times(0,T)  \\
    u = u_D & \mbox{ or } \nabla u \cdot \vec{\nu}_\Omega = 0 && \mbox{ on } \partial_0 \Omega \\
    \partial_t w - \delta_\Gamma \Delta_\Gamma w & = \mu \nabla u \cdot \vec\nu && \mbox{ on } \Gamma\\
        u(\cdot,0)&=u^0(\cdot) && \mbox{ in } \O \\
    w(\cdot,0)&=w^0(\cdot) && \mbox{ on } \G ,
  \end{align}
\end{subequations}
This is the first problem we consider in Section~\ref{sec:model}. Similar methods to those shown in the remaining sections can be used to show 
well posedness of the system \eqref{eq:nondim-pb} and rigorously take the limit $\delta_\chi \to 0$ for $\delta_k, \delta_\Omega, \delta_\Gamma, \mu > 0$ fixed.
The existence and uniqueness theory of \eqref{eq:nondim-pb} and the limit to obtain \eqref{eq:nochi-problem}  in the more general case of time dependent domains are considered by \citet{AlpEllTer-pp}.
}

We see that $\delta_k \ll 1$. {Again using} the original variables, we consider the limit problem: Find $u \colon \Omega \times {[0,T)} \to \mathbb{R}$ and $w \colon \Gamma \times {[0,T)} \to \mathbb{R}$ such that
{
\begin{subequations}
  \label{eq:fast-reaction-pb}
  \begin{align}
    \partial_t u - \delta_\Omega^{-1} \Delta u & = 0 && \mbox{ in } \Omega \\
    \label{eq:fast-reaction-pb-constraint}
    u w & = 0 && \mbox{ on } \Gamma \\
    u = u_D & \mbox{ or } \nabla u \cdot \vec{\nu}_\Omega = 0 && \mbox{ on } \partial_0 \Omega \\
    \partial_t w - \delta_\Gamma \Delta_\Gamma w & = \mu \nabla u \cdot \vec\nu && \mbox{ on } \Gamma\\
        u(\cdot,0)&=u^0(\cdot) && \mbox{ in } \O \\
    w(\cdot,0)&=w^0(\cdot) && \mbox{ on } \G ,
  \end{align}
\end{subequations}
}
where $\delta_\Omega, \delta_\Gamma$ and $\mu$ are positive parameters. We consider this problem as a large ligand--receptor binding rate limit of \eqref{eq:full-pb}.
We consider the well posedness of the problem and the justification of the limit in Section~\ref{sec:limit_parabolic}.

We can consider different problems by choosing different time scales $S$.
We can achieve two different problems by resolving the timescale of the volumetric diffusion ($\delta_\Omega \approx 1$) or the timescale of the surface adsorption flux ($\mu \approx 1$).

{
For $S = L^2 / D_\Omega = 5.6 \, \mathrm{s}$, we have
\begin{align*}
  \delta_\Omega = 1, \qquad
  \delta_\Gamma = 1.0 \cdot 10^{-3} \ll 1, \qquad
  \mu = 3.2 \cdot 10^{-1} \approx 1.
\end{align*}
}
This leads to {a} parabolic limit problem with dynamic boundary condition: Find $u \colon \Omega \times [0,T) \to \mathbb{R}$ and $w \colon \Gamma \times [0,T) \to \mathbb{R}$ such that
{
\begin{subequations}
\label{eq:nd_PdBC}
  \begin{align}
    \partial_t u - \delta_\Omega^{-1} \Delta u & = 0 && \mbox{ in } \Omega \\
    u w & = 0 && \mbox{ on } \Gamma \\
    u & = u_D && \mbox{ on } \partial_0 \Omega \\
    \partial_t w & = \nabla u \cdot \vec\nu && \mbox{ on } \Gamma\\
        u(\cdot,0)&=u^0(\cdot) && \mbox{ in } \O \\
    w(\cdot,0)&=w^0(\cdot) && \mbox{ on } \G.
  \end{align}
\end{subequations}
}
In this case, we have resolved the timescale of the diffusion of ligand, but the effect of the diffusion of surface bound receptor is lost.
We consider the well posedness of this problem and the justification of the limit in Section~\ref{sec:parab-limit-pb}.

{
Alternatively, taking $S = 10^2 \, \mathrm{s}$, we have
\begin{align*}
  \delta_\Omega = 5.7 \cdot 10^{-2} \ll 1, \qquad
  \delta_\Gamma = 1.8 \cdot 10^{-2} \ll 1, \qquad
  \mu = 5.7 \approx 1.
\end{align*}
}
This leads to an elliptic problem with dynamic boundary condition: Find $u \colon \Omega \times [0,T) \to \mathbb{R}$ and $w \colon \Gamma \times [0,T) \to \mathbb{R}$ such that
{
\begin{subequations}
\label{eq:nd_EdBC}
  \begin{align}
    -\Delta u & = 0 && \mbox{ in } \Omega \\
    u w & = 0 && \mbox{ on } \Gamma \\
    u & = u_D && \mbox{ on } \partial_0 \Omega \\
    \partial_t w & = \nabla u \cdot \vec\nu && \mbox{ on } \Gamma\\
    w(\cdot,0)&=w^0(\cdot) && \mbox{ on } \G.
  \end{align}
\end{subequations}
}
In this regime, we {have} chosen a time scale so that the diffusion of ligand has no memory of its previous value, except via the boundary condition.
This means this problem no longer requires an initial condition for $u$ to be a closed system.
We do not consider the exterior Neumann boundary condition in this case, since we arrive at a trivial problem where the solution is $u = 0$ and $w = w_0$.
The well posedness of this problem and a rigorous justification of limit is given in Section~\ref{sec:elliptic-limit-pb}.
We also show in Section~\ref{sec:var-ineq} that we can reformulate problems (\ref{eq:nd_PdBC}) and (\ref{eq:nd_EdBC}) by integrating forwards in time to derive variational inequalities.

{
  \begin{Rem}
    \label{rem:limits}
    In the large ligand-receptor binding rate limit,
    the nonlinear constraint \eqref{eq:fast-reaction-pb-constraint} ($u w = 0$) implies that the domain $\Gamma$ is separated into two regions, for positive times, where $u = 0$ and where $u > 0$.
    In the region $u > 0$, we have a Neumann boundary condition $\nabla u \cdot \vec\nu = 0$. This can be interpreted that there is no flux of ligand onto or off the surface in this region.
    In the region $u = 0$, we have a Dirichlet boundary condition $(u=0)$. This can be interpreted that the ligand in this region is perfectly (i.e.\ instantaneously) absorbed.
  \end{Rem}
}

\section{Preliminaries}\label{sec:prelim}
In this section, we define some of our notation and collect some technical results that will be used in the subsequent sections. We also prove some compact embedding results in Lemmas \ref{lem:H12L2-compact}, \ref{lem:L2H-12-compact} and \ref{lem:trace-compact} that are used to deduce strong convergence from weak convergence in suitable spaces.

Given a Hilbert space $Y$ we denote the dual space of a linear functionals on $Y$ by $\dual{Y}$. 
As we consider functions defined on surfaces, along with the surface function spaces $L^2(\Gamma)$ and $H^1(\Gamma)$, we will also use the space $H^{1/2}( \Gamma )$ and its dual $\dual{H^{1/2}( \Gamma )}$. For a Hilbert space $Y$, we consistently use the notation $\dualp{}{\cdot}{\cdot}{Y}$ to denote the duality pairing between the space $Y$ and its dual $\dual{Y}$.

\begin{Defn}
  The space $H^{1/2}( \Gamma )$ is defined by
  \begin{equation}
    H^{1/2}( \Gamma ) := \left\{ \xi \in L^2( \Gamma ) : \norm{ \xi }_{H^{1/2}( \Gamma ) } < + \infty \right\},
  \end{equation}
  where
  \begin{equation}
    \norm{ \xi }_{H^{1/2}( \Gamma ) }
    := \left(
      \int_{\Gamma} \xi^2 \dd \sigma
      + \int_{\Gamma} \int_{\Gamma} \frac{| \xi(x) - \xi(y) |^2}{ | x - y |^{n} }
      \dd \sigma(x) \dd \sigma(y)
      \right)^{\frac{1}{2}}.
  \end{equation}
\end{Defn}

The space can be characterised via the following result.

\begin{Prop}[Trace Theorem]
  The trace operator from $H^1( \Omega )$ to $H^{1/2}( \Gamma )$ is bounded and surjective.
\end{Prop}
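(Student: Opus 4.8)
The plan is to prove the Trace Theorem by reducing, via a partition of unity and local charts, to the model case of a graph over $\mathbb{R}^n$ and the half-space $\mathbb{R}^{n+1}_+$, where the statement is the classical fractional Sobolev trace theorem. I would begin by recalling that since $\Gamma$ is a smooth compact closed hypersurface, it admits a finite atlas of charts $\{(U_i, \psi_i)\}_{i=1}^N$ together with a subordinate smooth partition of unity $\{\varphi_i\}$. Because $\Gamma$ is smooth, the intrinsic $H^{1/2}(\Gamma)$ norm given by the Gagliardo seminorm is equivalent (uniformly over the finite atlas, using that the $\psi_i$ and their inverses have bounded derivatives, and that $|x-y|$ on $\Gamma$ is comparable to the Euclidean distance of the chart images) to $\left(\sum_i \|(\varphi_i \xi)\circ\psi_i^{-1}\|_{H^{1/2}(\mathbb{R}^n)}^2\right)^{1/2}$, where the latter is the standard fractional Sobolev norm on $\mathbb{R}^n$.

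For boundedness, I would take $v \in H^1(\Omega)$, localise near $\Gamma$ by multiplying with the $\varphi_i$, flatten each boundary piece with a smooth diffeomorphism carrying a neighbourhood of $U_i$ in $\Omega$ to a half-ball in $\mathbb{R}^{n+1}_+$ with $U_i$ mapping into $\{x_{n+1}=0\}$, and then invoke the classical estimate $\|w(\cdot,0)\|_{H^{1/2}(\mathbb{R}^n)} \le C\|w\|_{H^1(\mathbb{R}^{n+1}_+)}$. Summing over $i$ and undoing the changes of variables (all with bounded Jacobians by compactness and smoothness) yields $\|v|_\Gamma\|_{H^{1/2}(\Gamma)} \le C\|v\|_{H^1(\Omega)}$. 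The classical half-space estimate itself can be proved either by Fourier transform in the tangential variables — writing $w(\cdot,0)$ via the fundamental theorem of calculus in $x_{n+1}$ and using Plancherel to see that $\int_{\mathbb{R}^n}(1+|\xi'|)|\hat{w}(\xi',0)|^2\,d\xi' \le C\|w\|_{H^1}^2$ — or by the direct Gagliardo-seminorm computation; I would just cite it as standard (e.g. from a standard reference on Sobolev spaces).

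For surjectivity, given $\xi \in H^{1/2}(\Gamma)$ I would construct an extension $Ev \in H^1(\Omega)$ with $Ev|_\Gamma = \xi$ and $\|Ev\|_{H^1(\Omega)} \le C\|\xi\|_{H^{1/2}(\Gamma)}$, again by localisation: decompose $\xi = \sum_i \varphi_i \xi$, transport each piece to $\mathbb{R}^n$, apply the classical extension operator (for instance the harmonic/Poisson extension, or the explicit mollified extension $w(x',x_{n+1}) = \int \eta_{x_{n+1}}(x'-y')\tilde\xi_i(y')\,dy'$ with a suitable mollifier $\eta$), cut off in the normal direction so the extension is supported in the flattening chart, pull back to $\Omega$, and sum. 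A standard bound $\|w\|_{H^1(\mathbb{R}^{n+1}_+)} \le C\|\tilde\xi_i\|_{H^{1/2}(\mathbb{R}^n)}$ for the model extension finishes the construction; composing with the inverse trace then shows the trace is onto.

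The main obstacle is purely bookkeeping rather than conceptual: making sure the norm equivalence between the intrinsic Gagliardo norm on $\Gamma$ and the flattened $H^{1/2}(\mathbb{R}^n)$ norm is handled carefully, since the double integral defining $\|\cdot\|_{H^{1/2}(\Gamma)}$ mixes contributions from different charts and one must control the cross terms (points $x,y$ lying in different $U_i$). This is standard — away from the diagonal the kernel $|x-y|^{-n}$ is bounded and the contribution is controlled by the $L^2$ norm, while near the diagonal one is within a single chart — but it is the one place where smoothness and compactness of $\Gamma$ are genuinely used and where care is required. Everything else follows from the Euclidean theory and the finiteness of the atlas.
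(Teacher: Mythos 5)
Your proposal is correct, but it differs from the paper in that the paper offers no argument at all: its ``proof'' is a one-line citation to \cite[Thm~1.5.1.3]{grisvard2011elliptic}. What you have written out is essentially the standard proof that such a reference contains --- localisation by a finite partition of unity on the compact smooth hypersurface $\Gamma$, flattening to the half-space model, the classical estimate $\norm{w(\cdot,0)}_{H^{1/2}(\mathbb{R}^n)} \le C \norm{w}_{H^1(\mathbb{R}^{n+1}_+)}$ via Fourier transform in the tangential variables, and the reverse extension for surjectivity. Two remarks on how your route interacts with the paper's specific setting. First, the domain $\Omega$ here has two boundary components: the smooth interior surface $\Gamma$ and the outer boundary $\partial_0\Omega$, which is assumed only Lipschitz. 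Your localisation near $\Gamma$ automatically keeps all cutoffs and extensions supported away from $\partial_0\Omega$, so the weaker regularity there never enters; this is worth saying explicitly, since a blanket appeal to a trace theorem for the whole boundary $\partial\Omega$ would have to cope with the Lipschitz part, whereas your chart-based argument only uses smoothness where it is available. Second, your identified ``main obstacle'' --- controlling the off-diagonal cross terms in the Gagliardo double integral when $x$ and $y$ lie in different charts --- is exactly the right place to be careful, and your resolution (the kernel $|x-y|^{-n}$ is bounded off the diagonal, so that contribution is dominated by the $L^2(\Gamma)$ norm) is the correct one. In short: your approach buys a self-contained argument at the cost of the bookkeeping you describe; the paper buys brevity by outsourcing the whole statement to Grisvard. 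Both are legitimate, and your sketch has no gaps.
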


\begin{proof}
  The result can be found in \cite[Thm~1.5.1.3]{grisvard2011elliptic}.
\end{proof}

We recall the following interpolated trace inequality.

\begin{Prop}
  [Interpolated trace theorem]
  \label{prop:interpolated-trace}
  For all $\phi\in\Hil{1}(\O)$  and for any $\delta>0$
  \begin{equation}
    \label{eqn:interp_trace}
    \ltwon{\phi}{\G}^2\leq \delta\ltwon{\nabla\phi}{\O}^2+c_\delta\ltwon{\phi}{\O}^2.
  \end{equation}
\end{Prop}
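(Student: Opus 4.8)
The statement to prove is the interpolated trace inequality: for all $\phi \in H^1(\Omega)$ and any $\delta > 0$,
$$\ltwon{\phi}{\G}^2 \leq \delta \ltwon{\nabla\phi}{\O}^2 + c_\delta \ltwon{\phi}{\O}^2.$$
The plan is to combine the (scaling-invariant) continuity of the trace operator $H^1(\Omega) \to L^2(\Gamma)$ with a compactness-and-contradiction argument, or alternatively to argue directly via a compactness (Ehrling-type) lemma. I would set this up as follows.

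First I would record the plain trace bound: since $\Gamma$ is a smooth compact hypersurface sitting in the interior of $\Omega$ and $\partial_0\Omega$ is Lipschitz, the trace theorem (the Proposition just stated, or its interior version) gives a constant $C_{\mathrm{tr}}$ with $\ltwon{\phi}{\G} \leq C_{\mathrm{tr}} \Hiln{\phi}{1}{\O}$ for all $\phi \in H^1(\Omega)$; equivalently $\ltwon{\phi}{\G}^2 \leq C_{\mathrm{tr}}^2(\ltwon{\nabla\phi}{\O}^2 + \ltwon{\phi}{\O}^2)$. This is the starting inequality but has the wrong coefficient in front of the gradient term, so a direct application does not suffice.

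The main step is to upgrade this to the stated form for arbitrary small $\delta$. I would argue by contradiction: suppose the inequality fails for some fixed $\delta_0 > 0$. Then for every $k \in \mathbb{N}$ there is $\phi_k \in H^1(\Omega)$ with $\ltwon{\phi_k}{\G}^2 > \delta_0 \ltwon{\nabla\phi_k}{\O}^2 + k \ltwon{\phi_k}{\O}^2$. Normalising $\ltwon{\phi_k}{\G} = 1$, this forces $\ltwon{\nabla\phi_k}{\O}^2 < \delta_0^{-1}$ and $\ltwon{\phi_k}{\O}^2 < 1/k \to 0$, so $(\phi_k)$ is bounded in $H^1(\Omega)$; passing to a subsequence, $\phi_k \rightharpoonup \phi$ weakly in $H^1(\Omega)$, and by Rellich compactness $\phi_k \to \phi$ strongly in $L^2(\Omega)$, whence $\ltwon{\phi}{\O} = 0$, i.e. $\phi = 0$ a.e.\ in $\Omega$. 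On the other hand, the trace map $H^1(\Omega) \to L^2(\Gamma)$ is compact (trace into $H^{1/2}(\Gamma)$ composed with the compact embedding $H^{1/2}(\Gamma) \hookrightarrow L^2(\Gamma)$), so $\phi_k|_\Gamma \to \phi|_\Gamma = 0$ strongly in $L^2(\Gamma)$, contradicting $\ltwon{\phi_k}{\G} = 1$. Hence the inequality holds, with the resulting constant $c_\delta$ depending on $\delta$ (and on $\Omega$, $\Gamma$, $n$).

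The only genuine obstacle is justifying the \emph{compactness} of the trace operator $H^1(\Omega) \to L^2(\Gamma)$, which is what powers the contradiction; this follows from the Trace Theorem above (boundedness $H^1(\Omega) \to H^{1/2}(\Gamma)$) together with the compact embedding $H^{1/2}(\Gamma) \hookrightarrow L^2(\Gamma)$ on the compact manifold $\Gamma$ (a Rellich-type result, analogous to Lemma~\ref{lem:trace-compact}). Alternatively, one avoids the contradiction entirely by an explicit argument: extend $\phi$ appropriately and use the divergence theorem on the region between $\Gamma$ and $\partial_0\Omega$ with a smooth vector field $\vec{g}$ equal to $\normal$ near $\Gamma$, giving $\int_\Gamma \phi^2 = \int_{\O'} \div(\phi^2 \vec{g}) = \int_{\O'} (2\phi \nabla\phi\cdot\vec{g} + \phi^2 \div\vec{g})$, then Young's inequality $2|\phi||\nabla\phi| \leq \delta|\nabla\phi|^2 + \delta^{-1}|\phi|^2$ yields the claim directly with $c_\delta = \delta^{-1}\|\vec g\|_\infty^2 + \|\div\vec g\|_\infty$; I would present this direct route as it gives an explicit constant and is self-contained, citing \cite{grisvard2011elliptic} for the technicalities of the extension when $\partial_0\Omega$ is merely Lipschitz.
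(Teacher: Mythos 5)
Your proposal is correct, but it takes a different route from the paper in the trivial sense that the paper offers no proof at all: it simply cites \cite[Thm 1.5.1.10]{grisvard2011elliptic} for this inequality. Both of your arguments are sound. The direct divergence-theorem computation with a smooth vector field $\vec g$ equal to $\normal$ near $\G$ and supported away from $\partial_0\O$ is the standard self-contained proof (and essentially the one underlying Grisvard's theorem); it yields the explicit constant $c_\delta = \delta^{-1}\norm{\vec g}_\infty^2 + \norm{\div \vec g}_\infty$ and, since $\G$ is smooth and compactly contained in the interior of $D$, needs the Lipschitz regularity of $\partial_0\O$ only for the density of smooth functions in $\Hil{1}(\O)$. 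You are right to prefer it. One caution on your compactness-and-contradiction alternative: do not lean on the paper's Lemmas \ref{lem:H12L2-compact} or \ref{lem:trace-compact} for the compactness of the trace map, since both of those are proved in the paper \emph{using} Proposition \ref{prop:interpolated-trace}, which would make the argument circular. You would instead need to invoke the compact embedding $\Hil{1/2}(\G)\hookrightarrow \Lp{2}(\G)$ on the compact manifold $\G$ as an independent Rellich-type fact, which is legitimate but less elementary than the divergence-theorem route, and it produces a non-explicit $c_\delta$.
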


\begin{proof}
  See e.g.\ \cite[Thm 1.5.1.10]{grisvard2011elliptic}.
\end{proof}

Note that for $\xi \in L^2( \Gamma )$ and $\rho\in\Hil{1/2}(\G)$ the following duality pairing is equal to $L^2(\Gamma)$ inner-product:
\begin{equation}
  \label{eq:dual12-L2inner-prod}
  \langle \xi, \rho \rangle_{H^{1/2}(\Gamma)}
  = \int_\Gamma \xi \rho \dd \sigma
\end{equation}

%
%

\subsection{Compact embeddings}

Since we are dealing with nonlinear problems, we will need to use some compact embeddings of Bochner spaces.

We recall that if $\{ f_k \}$ is a sequence of bounded functions in $L^p( 0, T; B )$, with $B$ a Banach space, for $1 \le p < \infty$, then there exists a subsequence $\{ f_{k_j} \} \subset \{ f_k \}$ and $f \in L^p( 0, T; B )$ such that
\begin{equation}
  \label{eq:weak-conv}
  f_{k_j} \rightharpoonup f \qquad \mbox{ in } L^p( 0, T; B ).
\end{equation}
Here we are interested to show under what conditions we may assert the existence of a strongly convergent subsequence.
The basic results we require are summarised by \citet{simon1986compact}.

\begin{Lem}
  [Aubin-Lions-Simons compactness theory {\citep{simon1986compact}}]
  \label{lem:als-compact}
  Let $\{ f_k \}$ be a bounded sequence of functions in $L^p( 0, T; B )$ where $B$ is a Banach space and $1 \le p \le \infty$. If
  \begin{enumerate}
  \item the sequence of functions $\{ f_k \}$ is bounded in $L^p( 0, T; X )$ where $X$ is compactly embedded in $B$;
  \item either
    \begin{enumerate}
    \item the derivatives $\{ \partial_t f_k \}$ are bounded in the space $L^p( 0, T; Y )$ where $B \subset Y$; or
    \item for each $k$, the time translates of $\{ f_k \}$ are such that 
      \begin{equation}
        \label{eq:3}
        \int_0^{T - \tau} \norm{ f_k( t + \tau ) - f_k( t ) }_{B}^p \dd t
        \to 0\quad\text{ as }\quad\tau\to0.
      \end{equation}
    \end{enumerate}
  \end{enumerate}
  Then there exists a subsequence $\{ f_{k_j} \} \subset \{ f_k \}$ and $f \in L^p( 0, T; X )$ such that
  \begin{equation}
    \begin{aligned}
      f_{k_j} & \rightharpoonup f && \mbox{ in } L^p( 0, T; X ) \\
      f_{k_j} & \to f && \mbox{ in } L^p( 0, T; B ).
    \end{aligned}
  \end{equation}
\end{Lem}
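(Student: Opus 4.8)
This is the compactness theory of \citet{simon1986compact}, and in the paper one would simply invoke that reference; here I sketch the route I would take to prove it from scratch. The argument rests on two classical ingredients: (i) the Lions interpolation (Ehrling-type) inequality, namely that compactness of $X \hookrightarrow B$ together with the continuous embedding $B \hookrightarrow Y$ implies that for every $\eta > 0$ there is $C_\eta$ with $\| v \|_B \le \eta \| v \|_X + C_\eta \| v \|_Y$ for all $v \in X$ (proved by a standard contradiction argument); and (ii) a Fr\'echet--Kolmogorov--Riesz type compactness criterion in the Bochner space $L^p(0,T;B)$, characterising relative compactness by uniform decay of time translates together with relative compactness in $B$ of the time averages. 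First I would extract, using reflexivity of $X$ when $1 < p < \infty$ (and a weak-$*$ argument when $p = 1$ or $p = \infty$), a subsequence with $f_{k_j} \rightharpoonup f$ in $L^p(0,T;X)$; the task is then to find a further subsequence converging strongly in $L^p(0,T;B)$, after which the two limits coincide and the conclusion follows.

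The next step is to reduce hypothesis (2a) to hypothesis (2b), so that in every case one has a uniform $L^p$-modulus of continuity for the time translates in the $B$-norm. If $\{ \partial_t f_k \}$ is bounded in $L^p(0,T;Y)$, then writing $f_k(t+\tau) - f_k(t) = \int_t^{t+\tau} \partial_t f_k(s) \, \mathrm{d}s$ and applying H\"older in $s$ and then Fubini gives
\begin{equation*}
  \int_0^{T-\tau} \| f_k(t+\tau) - f_k(t) \|_Y^p \, \mathrm{d}t
  \le \tau^p \, \| \partial_t f_k \|_{L^p(0,T;Y)}^p
  \le C \tau^p
\end{equation*}
uniformly in $k$ (with the obvious modification for $p = \infty$). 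Applying the Lions inequality pointwise in $t$ and using that $\{ f_k \}$ is bounded in $L^p(0,T;X)$, one obtains a bound of the form $\int_0^{T-\tau} \| f_k(t+\tau) - f_k(t) \|_B^p \, \mathrm{d}t \le \eta^p C_1 + C_\eta^p C \tau^p$; choosing $\eta$ small and then $\tau$ small shows this tends to $0$ as $\tau \to 0$, uniformly in $k$. Thus in both cases the translates decay uniformly in $L^p(0,T;B)$.

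With this uniform equicontinuity in hand I would prove the compactness by a time-averaging argument. Extend each $f_k$ by zero outside $(0,T)$ and set $f_k^\delta(t) = \tfrac{1}{\delta} \int_t^{t+\delta} f_k(s) \, \mathrm{d}s$. On the one hand, $\| f_k^\delta - f_k \|_{L^p(0,T;B)} \le \sup_{0 < h \le \delta} \| f_k( \cdot + h ) - f_k( \cdot ) \|_{L^p(0,T-h;B)} \to 0$ as $\delta \to 0$, uniformly in $k$, by the previous step. On the other hand, for fixed $\delta$ one has $\| f_k^\delta(t) \|_X \le \delta^{-1/p} \| f_k \|_{L^p(0,T;X)}$ uniformly in $k$ and $t$, while $\partial_t f_k^\delta(t) = \tfrac{1}{\delta} ( f_k(t+\delta) - f_k(t) )$ is controlled in $L^p(0,T;B)$, so $\{ f_k^\delta \}_k$ is a uniformly (H\"older) equicontinuous family of maps into $B$; since $X \hookrightarrow B$ is compact, Arzel\`a--Ascoli makes $\{ f_k^\delta \}_k$ relatively compact in $C([0,T];B)$ and hence in $L^p(0,T;B)$. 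A standard $\varepsilon/3$ argument with a diagonal extraction over $\delta = 1/m$ then yields a subsequence that is Cauchy, hence convergent, in $L^p(0,T;B)$; its limit must equal the weak limit $f$ from the first step, which therefore lies in $L^p(0,T;X)$, completing the proof.

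The step I expect to be most delicate is keeping the endpoint exponents $p = 1$ and $p = \infty$ honest throughout: the initial weak or weak-$*$ extraction, the H\"older estimate on the translates, and especially the passage from equicontinuity to relative compactness, which for $p = 1$ requires the sharper form of the Fr\'echet--Kolmogorov criterion (uniform integrability of the time averages) rather than the naive H\"older argument. The interpolation inequality (i) and the Arzel\`a--Ascoli step are routine by comparison, as is the minor bookkeeping caused by translates living on $(0, T-\tau)$ and by the choice of extension past the endpoints.
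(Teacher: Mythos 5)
The paper offers no proof of this lemma beyond the citation to \citet{simon1986compact}, and your proposal takes essentially the same route: it defers to Simon's compactness theory and then sketches Simon's own argument (Ehrling interpolation to reduce hypothesis 2(a) to uniform time-translate decay in $B$, followed by time averaging, Arzel\`a--Ascoli in $C([0,T];B)$, and a diagonal extraction). Your outline is consistent with that standard proof, the only delicate points being the endpoint exponents $p=1,\infty$ (where the initial weak or weak-$*$ extraction in $L^p(0,T;X)$ needs extra structure), which you already flag and which the paper's own statement glosses over in the same way.
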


\begin{Rem}
  Using the criterion 2(a), we see that if $\{ \eta_k \} \subset L^2( 0, T; L^2( \Omega ) )$ with a constant $C > 0$ such that
  \begin{equation*}
    \norm{ \eta_k }_{L^2( 0, T; H^1( \Omega ) ) } + \norm{ \partial_t \eta_k }_{L^2\left( 0, T; \dual{H^{1}( \Omega )} \right) } \le C \qquad \mbox{ for all } k,
  \end{equation*}
  then, there exists a subsequence, for which will use the same subscript $\{ \eta_k \}$, and $\eta \in L^2( 0, T; H^1( \Omega ) )$ such that
  \begin{equation}
    \begin{aligned}
      \eta_{k} & \rightharpoonup \eta && \mbox{ in } L^2( 0, T; H^1( \Omega ) ) \\
      \eta_{k} & \to \eta && \mbox{ in } L^2( 0, T; L^2( \Omega ) ).
    \end{aligned}
  \end{equation}
  This follows from the compact embedding of $H^1(\Omega)$ in $L^2( \Omega )$.

  However, we wish to recover strong convergence of a subsequence with less control over the time derivatives.
  The generality of criterion 2(b) allows a more general weak in time notion of solution to be used.
\end{Rem}
We will apply this result for sequences to derive strongly convergent subsequences in $L^2\left( 0, T; \dual{H^{1/2}( \Gamma )} \right)$.

\begin{Lem}
  \label{lem:H12L2-compact}
  Let $\{ \xi_k \}$ be a bounded sequence in $H^{1/2}( \Gamma )$. Then there exists a subsequence $\{ \xi_{k_j} \} \subset \{ \xi_k \}$ and $\xi \in H^{1/2}( \Gamma )$ such that
  \begin{equation}
    \begin{aligned}
      \xi_{k_j} & \rightharpoonup \xi && \mbox{ in } H^{1/2}( \Gamma ) \\
      \xi_{k_j} & \to \xi && \mbox{ in } L^2( \Gamma ).
    \end{aligned}
  \end{equation}
\end{Lem}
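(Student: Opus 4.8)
The plan is to show that $H^{1/2}(\Gamma)$ is compactly embedded in $L^2(\Gamma)$ and then simply invoke reflexivity of $H^{1/2}(\Gamma)$ to extract the weakly convergent subsequence, matching its weak limit with the strong $L^2$ limit. First I would note that $H^{1/2}(\Gamma)$ is a Hilbert space, hence reflexive, so any bounded sequence $\{\xi_k\}$ admits a subsequence $\{\xi_{k_j}\}$ with $\xi_{k_j} \rightharpoonup \xi$ in $H^{1/2}(\Gamma)$ for some $\xi \in H^{1/2}(\Gamma)$. The only real content is the compact embedding $H^{1/2}(\Gamma) \hookrightarrow\hookrightarrow L^2(\Gamma)$, which upgrades this to strong convergence in $L^2(\Gamma)$ along a further subsequence; since a weakly convergent sequence that converges strongly in a coarser topology must have the same limit, relabelling gives the claim.

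For the compact embedding I would proceed via the trace characterisation already available in the excerpt: by the Trace Theorem, the trace operator $T \colon H^1(\Omega) \to H^{1/2}(\Gamma)$ is bounded and surjective, so it has a bounded right inverse (a lifting) $E \colon H^{1/2}(\Gamma) \to H^1(\Omega)$ with $T \circ E = \mathrm{Id}$. Given a bounded sequence $\{\xi_k\}$ in $H^{1/2}(\Gamma)$, the liftings $\{E\xi_k\}$ are bounded in $H^1(\Omega)$. Now $\Omega$ is a bounded Lipschitz domain, so the Rellich–Kondrachov theorem gives that $H^1(\Omega)$ embeds compactly into $L^2(\Omega)$; more to the point, combining the boundedness of $\{E\xi_k\}$ in $H^1(\Omega)$ with the compactness of the trace operator as a map $H^1(\Omega) \to L^2(\Gamma)$ (which follows from the interpolated trace inequality of Proposition~\ref{prop:interpolated-trace} together with the compact embedding $H^1(\Omega)\hookrightarrow\hookrightarrow L^2(\Omega)$: a Cauchy-in-$L^2(\Omega)$ subsequence with bounded gradients is Cauchy in $L^2(\Gamma)$), we conclude that $\{\xi_k = T(E\xi_k)\}$ has a subsequence converging strongly in $L^2(\Gamma)$.

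Alternatively, and perhaps more cleanly, one can argue directly from the Gagliardo seminorm: the embedding $H^{1/2}(\Gamma) \hookrightarrow\hookrightarrow L^2(\Gamma)$ is a standard instance of compactness of fractional Sobolev embeddings on a compact manifold, provable by a partition-of-unity reduction to local charts and the Fréchet–Kolmogorov criterion, using that $\sup_k \int_\Gamma \int_\Gamma |\xi_k(x)-\xi_k(y)|^2 |x-y|^{-n}\,\dd\sigma(x)\,\dd\sigma(y) < \infty$ controls the $L^2$-modulus of continuity of translates uniformly in $k$. I would cite this rather than reprove it.

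The main obstacle is purely the compact embedding $H^{1/2}(\Gamma)\hookrightarrow\hookrightarrow L^2(\Gamma)$; everything else is a soft functional-analytic wrap-up (reflexivity, uniqueness of limits under two topologies). Since $\Gamma$ is assumed smooth and compact, this embedding is classical, so in the write-up I expect it to be dispatched by a citation to a standard reference on fractional Sobolev spaces, with the trace-lifting argument above available as a self-contained alternative using only results already stated in the excerpt.
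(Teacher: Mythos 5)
Your proposal is correct and follows essentially the same route as the paper: the paper also lifts the sequence to $H^1(\Omega)$ via a bounded extension (it takes the harmonic extension with zero data on $\partial_0\Omega$ as the right inverse of the trace), applies the compact embedding $H^1(\Omega)\hookrightarrow\hookrightarrow L^2(\Omega)$, and then uses the interpolated trace inequality of Proposition~\ref{prop:interpolated-trace} to transfer the strong bulk convergence to strong convergence of the traces in $L^2(\Gamma)$. Your Cauchy-sequence phrasing of that last step and the explicit reflexivity remark for the weak $H^{1/2}(\Gamma)$ limit are only cosmetic variations on the paper's argument.
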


\begin{proof}
  For any $\rho \in H^{1/2}( \Gamma )$, we define an extension to $\Omega$, written $E \rho \in H^1( \Omega )$, as the unique solution of:
  \begin{align*}
    - \Delta ( E \rho ) & = 0 && \mbox{ in } \Omega \\
    E \rho & = 0 && \mbox{ on } \partial_0 \Omega \\
    E \rho & = \rho && \mbox{ on } \Gamma.
  \end{align*}
  We note that for a constant independent of $\rho$, we have
  \begin{equation*}
    \norm{ E \rho }_{H^1( \Omega ) } \le c \norm{ \rho }_{H^{1/2}( \Gamma ) }.
  \end{equation*}

  This implies we have a sequence $\{ E \xi_k \}$ which is uniformly bounded in $H^1(\Omega)$: There exists $C_0 > 0$ such that
  \begin{equation*}
    \norm{ E \xi_k }_{H^1(\Omega)} \le C_0.
  \end{equation*}
  From the compact embedding of $H^1(\Omega)$ into $L^2(\Omega)$, we know that there exists a subsequence $\{ \xi_{k_j} \} \subset \{ \xi_k \}$, and $\eta \in H^{1}( \Omega )$ such that
  \begin{equation*}
    E \xi_{k_j} \to {\eta} \qquad \mbox{ in } L^2( \Omega ).
  \end{equation*} 

  {
    Denote by $\xi = \eta|_\Gamma$.
  Fix $\eps > 0$ and choose $\delta \le \eps / ( 4 C_0 )$. From the strong convergence of $\{ E \xi_{k_j} \}$, we know there exists $K$ such that for $j \ge K$,
  \begin{equation*}
    \norm{ E \xi_{k_j} - {\eta} }_{L^2(\Omega)} \le \frac{\eps}{2 c_\delta},
  \end{equation*}
  where $c_\delta$ is from Proposition~\ref{prop:interpolated-trace}.
  It follows that for $j \ge K$, we can infer by applying the interpolated trace inequality (Proposition~\ref{prop:interpolated-trace}), with $\delta$ as above, that
  \begin{align*}
    \norm{ \xi_{k_j} - \xi }_{L^2( \Gamma )}
    & \le \delta \norm{ E \xi_{k_j} - \eta }_{H^1( \Omega )}
    + c_\delta \norm{ E \xi_{k_j} - E \xi }_{L^2( \Omega )} \\
    & \le 2\delta C_0 + \frac{\eps}{2} \le \eps.
  \end{align*}
  }
  Thus, we have shown the strong convergence of $\xi_k$ to $\xi$ in $ L^2( \Gamma )$.
\end{proof}

\begin{Lem}
  \label{lem:L2H-12-compact}
  Let $\{ \xi_k \}$ be a bounded sequence in $L^2( \Gamma )$. Then there exists a subsequence $\{ \xi_{k_j} \} \subset \{ \xi_k \}$ and $\xi \in L^2( \Gamma )$ such that
  \begin{equation}
    \begin{aligned}
      \xi_{k_j} & \rightharpoonup \xi && \mbox{ in } L^2( \Gamma ) \\
      \xi_{k_j} & \to \xi && \mbox{ in } \dual{H^{1/2}( \Gamma )}.
    \end{aligned}
  \end{equation}
\end{Lem}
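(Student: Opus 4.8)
The plan is to dualize Lemma~\ref{lem:H12L2-compact}. The statement we must prove (the strong convergence $\xi_{k_j} \to \xi$ in $\dual{H^{1/2}(\G)}$ for a bounded sequence in $L^2(\G)$) is essentially the adjoint of the compact embedding $H^{1/2}(\G) \hookrightarrow\hookrightarrow L^2(\G)$ established above, composed with the fact that $L^2(\G)$ sits inside $\dual{H^{1/2}(\G)}$ via the pairing \eqref{eq:dual12-L2inner-prod}. So the key point is: if the embedding $H^{1/2}(\G) \hookrightarrow L^2(\G)$ is compact, then its adjoint $L^2(\G)' \to \dual{H^{1/2}(\G)}$ — which under the identification $L^2(\G)' \cong L^2(\G)$ is just the inclusion $L^2(\G) \hookrightarrow \dual{H^{1/2}(\G)}$ — is also compact. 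Compactness of an operator is equivalent to compactness of its adjoint (Schauder's theorem), so this inclusion is a compact operator.

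The steps I would carry out are as follows. First, extract a weakly convergent subsequence: since $\{\xi_k\}$ is bounded in the Hilbert space $L^2(\G)$, pass to a subsequence $\{\xi_{k_j}\}$ with $\xi_{k_j} \rightharpoonup \xi$ in $L^2(\G)$ for some $\xi \in L^2(\G)$; this already gives the first claimed convergence. Second, observe that $L^2(\G) \subset \dual{H^{1/2}(\G)}$ continuously: for $\zeta \in L^2(\G)$ and $\rho \in H^{1/2}(\G)$, the functional $\rho \mapsto \int_\G \zeta \rho \dd\sigma = \langle \zeta, \rho\rangle_{H^{1/2}(\G)}$ satisfies $|\langle \zeta, \rho\rangle| \le \ltwon{\zeta}{\G}\,\ltwon{\rho}{\G} \le C \ltwon{\zeta}{\G}\,\norm{\rho}_{H^{1/2}(\G)}$ by \eqref{eq:dual12-L2inner-prod}. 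Third — the crux — upgrade $\rightharpoonup$ in $L^2(\G)$ to $\to$ in $\dual{H^{1/2}(\G)}$. To do this directly, I compute
\begin{equation*}
  \norm{ \xi_{k_j} - \xi }_{\dual{H^{1/2}(\G)}}
  = \sup_{\substack{\rho \in H^{1/2}(\G) \\ \norm{\rho}_{H^{1/2}(\G)} \le 1}}
  \int_\G ( \xi_{k_j} - \xi ) \rho \dd\sigma,
\end{equation*}
and I want to show this supremum tends to $0$. The standard argument: if it did not, there would be $\eps_0 > 0$, a further subsequence, and unit-norm test functions $\rho_j \in H^{1/2}(\G)$ with $\int_\G (\xi_{k_j} - \xi)\rho_j \dd\sigma \ge \eps_0$. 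By Lemma~\ref{lem:H12L2-compact} applied to $\{\rho_j\}$ (it is bounded in $H^{1/2}(\G)$), pass to a subsequence with $\rho_j \to \rho$ strongly in $L^2(\G)$. Then split
\begin{equation*}
  \int_\G (\xi_{k_j} - \xi)\rho_j \dd\sigma
  = \int_\G (\xi_{k_j} - \xi)(\rho_j - \rho) \dd\sigma
  + \int_\G (\xi_{k_j} - \xi)\rho \dd\sigma;
\end{equation*}
the first term is bounded by $\big(\ltwon{\xi_{k_j}}{\G} + \ltwon{\xi}{\G}\big)\ltwon{\rho_j - \rho}{\G} \le 2C \ltwon{\rho_j - \rho}{\G} \to 0$ using the uniform bound on $\{\xi_k\}$ in $L^2(\G)$, and the second term tends to $0$ because $\xi_{k_j} \rightharpoonup \xi$ in $L^2(\G)$ and $\rho \in L^2(\G)$ is a fixed test function. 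This contradicts the lower bound $\eps_0$, completing the proof.

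The main obstacle is really just the third step, and it is a mild one: one must handle the supremum over an infinite-dimensional unit ball, which is why the contradiction argument (or equivalently Schauder's theorem on adjoints of compact operators) is needed rather than a naive term-by-term estimate. The essential input is that Lemma~\ref{lem:H12L2-compact} lets us replace the \emph{test functions} $\rho_j$ by something strongly $L^2$-convergent, which then pairs well against the merely weakly convergent $\xi_{k_j} - \xi$. Everything else is routine: the weak compactness of bounded sequences in a Hilbert space and the continuity of the embedding $L^2(\G) \hookrightarrow \dual{H^{1/2}(\G)}$ coming from \eqref{eq:dual12-L2inner-prod}.
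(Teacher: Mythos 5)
Your proof is correct and takes essentially the same route as the paper's: extract a weakly convergent subsequence in $L^2(\Gamma)$, then argue by contradiction, applying Lemma~\ref{lem:H12L2-compact} to the unit-norm test functions $\rho_j$ to obtain a strongly $L^2$-convergent limit $\rho$, and pair it against the weakly convergent $\xi_{k_j}-\xi$. Your explicit splitting $\int_\G(\xi_{k_j}-\xi)\rho_j = \int_\G(\xi_{k_j}-\xi)(\rho_j-\rho) + \int_\G(\xi_{k_j}-\xi)\rho$ in fact spells out the step the paper compresses into ``Hence, we can infer''.
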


\begin{proof}
  Since $\{ \xi_k \}$ is uniformly bounded in $L^2( \Gamma )$, we know that is has a subsequence $\{ \xi_{k_j} \}$ which weakly converges to some $\xi \in L^2( \Gamma )$.
  We suppose, for contradiction, that there exists no subsequence of $\{ \xi_{k_j} \}$ that strongly converges to $\xi$ in $\dual{H^{1/2}( \Gamma )}$.
  This implies that there exists $\delta > 0$ such that
  \begin{equation*}
    \norm{ \xi_{k_j} - \xi }_{\dual{H^{1/2}( \Gamma )} } \ge \delta.
  \end{equation*}
  Using the definition of $\dual{H^{1/2}(\Gamma)}$ as the dual space to $H^{1/2}( \Gamma )$, this implies there exists a sequence $\{ \rho_j \} \subset H^{1/2}( \Gamma )$, with $\norm{ \rho_j }_{H^{1/2}(\Gamma)} = 1$, such that {for all $j$}
  \begin{equation*}
    \langle \xi_{k_j} - \xi, \rho_j \rangle_{H^{1/2}(\Gamma)}
    = \int_\Gamma ( \xi_{k_j} - \xi ) \rho_j \dd \sigma \ge {\frac{\delta}{2}}.
  \end{equation*}
  From Lemma~\ref{lem:H12L2-compact}, we know that a subsequence $\{ \rho_{j_l} \} \subset \{ \rho_j \}$ converges strongly to $\rho \in H^{1/2}(\Gamma)$ in $L^2( \Gamma )$. Hence, we can infer
  \begin{equation*}
    \int_\Gamma ( \xi_{k_j} - \xi ) \rho \dd \sigma \ge {\frac{\delta}{2}}.
  \end{equation*}
  However, this contradicts the supposition that $\xi_{k_j}$ converges weakly to $\xi$ in $L^2( \Gamma )$.
\end{proof}
We conclude this section with a result which is similar in nature to the previous results.
\begin{Lem}
  \label{lem:trace-compact}
  Let $\{ \eta_k \}$ be a bounded sequence in $L^2( 0, T; H^1( \Omega ) )$ and $\eta \in L^2( 0, T; H^1( \Omega ) )$ such that
  \begin{equation}
    \eta_k \to \eta \qquad \mbox{ in } L^2( 0, T; L^2( \Omega ) ).
  \end{equation}
  Then the trace sequence converges to the trace of the limit:
  \begin{align}
    \eta_{k}|_{\Gamma} \to \eta|_{\Gamma} \qquad \mbox{ in } L^2( 0, T; L^2( \Gamma ) ).
  \end{align}
\end{Lem}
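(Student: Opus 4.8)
The plan is to reduce the assertion to a pointwise-in-time application of the interpolated trace inequality (Proposition~\ref{prop:interpolated-trace}), followed by an $\eps$--argument of exactly the same flavour as the one used in the proof of Lemma~\ref{lem:H12L2-compact}. Crucially, no subsequence extraction is needed here: strong convergence of the \emph{full} sequence $\eta_k \to \eta$ in $L^2(0,T;L^2(\O))$ is already assumed, so the conclusion will hold for the full sequence as well.

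\textbf{Preliminary bookkeeping.} First I would note that, since $\eta\in L^2(0,T;H^1(\O))$ and $\{\eta_k\}$ is bounded in the same space, there is a constant $C>0$ with $\|\eta_k\|_{L^2(0,T;H^1(\O))}\le C$ and $\|\eta\|_{L^2(0,T;H^1(\O))}\le C$; in particular $\|\nabla(\eta_k-\eta)\|_{L^2(0,T;L^2(\O))}\le 2C$. Since the trace operator $H^1(\O)\to L^2(\G)$ is bounded and linear, the map $t\mapsto\ltwon{\eta_k(t)-\eta(t)}{\G}$ is measurable and square-integrable, so $\eta_k|_\G-\eta|_\G$ is a well-defined element of $L^2(0,T;L^2(\G))$ whose norm is what we must estimate.

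\textbf{Main estimate.} Next I would apply inequality~\eqref{eqn:interp_trace} with $\phi=\eta_k(t)-\eta(t)\in H^1(\O)$ for a.e.\ $t\in(0,T)$ and an arbitrary parameter $\delta>0$, observing that the constant $c_\delta$ there depends only on $\O$ and $\delta$, not on $t$. Integrating the pointwise inequality over $(0,T)$ then gives
\[
\|\eta_k|_\G-\eta|_\G\|_{L^2(0,T;L^2(\G))}^2 \;\le\; \delta\,\|\nabla(\eta_k-\eta)\|_{L^2(0,T;L^2(\O))}^2 + c_\delta\,\|\eta_k-\eta\|_{L^2(0,T;L^2(\O))}^2 \;\le\; 4C^2\delta + c_\delta\,\|\eta_k-\eta\|_{L^2(0,T;L^2(\O))}^2 .
\]
Given $\eps>0$, I would first fix $\delta=\eps/(8C^2)$ so that the first term is at most $\eps/2$, and then, using the assumed strong convergence in $L^2(0,T;L^2(\O))$, choose $K$ so large that $c_\delta\,\|\eta_k-\eta\|_{L^2(0,T;L^2(\O))}^2\le\eps/2$ for all $k\ge K$. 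This yields $\|\eta_k|_\G-\eta|_\G\|_{L^2(0,T;L^2(\G))}^2\le\eps$ for $k\ge K$, which is the claim.

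\textbf{Main obstacle.} There is essentially no deep difficulty here; the only two points demanding a little care are (i) that the interpolated trace constant $c_\delta$ is genuinely independent of $t$, so that the pointwise inequality integrates cleanly over $(0,T)$, and (ii) the order of quantifiers — $\delta$ must be chosen in terms of $\eps$ and the uniform bound $C$ \emph{before} $K$ is chosen — which is precisely the structure already used in Lemma~\ref{lem:H12L2-compact}.
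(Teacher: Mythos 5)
Your proof is correct and follows essentially the same route as the paper's: apply the interpolated trace inequality (Proposition~\ref{prop:interpolated-trace}) to $\eta_k-\eta$, integrate in time, and choose $\delta$ in terms of $\eps$ and the uniform $H^1$ bound before choosing $K$. If anything, your version is slightly more careful than the paper's write-up in keeping the squared norms consistent with the statement of \eqref{eqn:interp_trace}.
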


\begin{proof}
  Denote by $C_0 > 0$ the upper bound of $\{ \eta_k \}$ and $\eta$ in $L^2( 0, T; H^1( \Omega ) )$:
  \begin{equation*}
    \norm{ \eta_k }_{L^2( 0, T; H^1( \Omega ) )}
    + \norm{ \eta }_{L^2( 0, T; H^1( \Omega ) )}
    \le C_0.
  \end{equation*}
  Fix $\eps > 0$ and choose $\delta \le \eps / ( 2 C_0 )$.
  Then from the convergence of $\{ \eta_k \}$ in $L^2( 0, T; L^2( \Omega ) )$, there exists $K$ such that for $k \ge K$,
  \begin{equation*}
    \norm{ \eta_k - \eta }_{L^2( 0, T; L^2( \Omega ) )}
    \le \frac{\eps}{2 c_\delta},
  \end{equation*}
  where $c_\delta$ is from Proposition~\ref{prop:interpolated-trace}.
  It follows that for $k \ge K$, we can infer by applying the interpolated trace inequality (Proposition~\ref{prop:interpolated-trace}), with $\delta$ as above, that
  \begin{align*}
    \norm{ \eta_k - \eta }_{L^2( 0, T; L^2( \Gamma ) ) }
    & \le \delta \norm{ \eta_k - \eta }_{L^2( 0, T; H^1( \Omega ) )}
    + c_\delta \norm{ \eta_k - \eta }_{L^2( 0, T; L^2( \Omega ) )} \\
    & \le \delta C_0 + \frac{\eps}{2} \le \eps.
  \end{align*}
  Thus, we have shown the strong convergence of $\eta_k$ to $\eta$ in $L^2( 0, T; L^2( \Gamma ) )$.
\end{proof}


\section{Ligand-receptor model}\label{sec:model}

In this section, we establish an existence and uniqueness theory for (\ref{eq:eps-problem}). As described in \S \ref{sec:limit-model},  (\ref{eq:eps-problem}) arises from (\ref{eq:k-problem}) if one neglects the receptor-ligand complexes,  non-dimensionalises as in (\ref{eq:nondim-pb}) and (for simplicity) sets the surface interchange flux $\mu=1$. 

In order to introduce the concept of a weak solution to (\ref{eq:eps-problem}), for $\gamma\in\Reals$, we introduce the Sobolev space
\[H^1_{e_\gamma}( \Omega ):=\{v\in\Hil{1}(\O)\vert v=\gamma\text{ on }\partial_0\O\},\]
where the boundary values are understood in the sense of traces and we adopt the notation, of using the same symbol for a function and its trace. We now introduce our concept of a weak solution to (\ref{eq:eps-problem}).

{
\begin{Defn}[Weak solution of  (\ref{eq:eps-problem})]\label{def:ws-eps-problem}
  For the Dirichlet boundary data case, we say that a pair $(u,w) \in \Lp{2}( {0,T} ; \Hil{1}_{e_{u_D}}(\O)) \times \Lp{2}( {0,T} ; \Hil{1}(\G))$ with $u,w\geq 0$ and with $(\partial_t u, \partial_t w ) \in \Lp{2}\left( {0,T};\dual{\Hil{1}_{e_{0}}(\O)}\right) \times \Lp{2}\left( {0,T}; \dual{\Hil{1}(\G)} \right)$  is a weak solution of (\ref{eq:eps-problem}) if for all $(\eta,\rho)\in\Hil{1}_{e_0}(\O)\times\Hil{1}(\G)$ {and for a.e. $t\in(0,T)$}
  \begin{subequations}
    \label{eqn:wf_eps_problem}
    \begin{align}
      \delO\dualp{\dual{\Hil{1}_{e_0}(\O)}}{\pdt u}{\eta}{\Hil{1}_{e_0}(\O)}
      + \int_\Omega  \nabla u \cdot \nabla \eta \dd x
      & = - \frac{1}{\delk} \int_\Gamma u w \eta \dd \sigma \\
      \label{eqn:wf_eps_problem_w}
      \dualp{\dual{\Hil{1}(\G)}}{\pdt w}{\rho}{\Hil{1}(\G)}
      + \delG \int_\Gamma \nabla_\Gamma w \cdot \nabla_\Gamma \rho \dd \sigma
      & = - \frac{1}{\delk} \int_\Gamma u w \rho \dd \sigma.
    \end{align}
  \end{subequations}
  In the case of Neumann boundary data, we say that a pair $(u,w) \in \Lp{2}( {0,T} ; \Hil{1}(\O)) \times \Lp{2}( {0,T} ; \Hil{1}(\G))$ with $u,w\geq 0$ and with $(\partial_t u, \partial_t w ) \in \Lp{2}\left( {0,T}, \dual{\Hil{1}(\O)}\right) \times \Lp{2}\left( {0,T}; \dual{\Hil{1}(\G)} \right)$  is a weak solution of (\ref{eq:eps-problem}) if for all $(\eta,\rho)\in\Hil{1}(\O)\times\Hil{1}(\G)$ {and for a.e. $t\in(0,T)$}
  \begin{subequations}
    \label{eqn:wf_eps_problem_neumann}
    \begin{align}
\delO\dualp{\dual{\Hil{1}(\O)}}{\pdt u}{\eta}{\Hil{1}(\O)}      + \int_\Omega  \nabla u \cdot \nabla \eta \dd x
      & = - \frac{1}{\delk} \int_\Gamma u w \eta \dd \sigma \\
      \dualp{\dual{\Hil{1}(\G)}}{\pdt w}{\rho}{\Hil{1}(\G)}
      + \delG \int_\Gamma \nabla_\Gamma w \cdot \nabla_\Gamma \rho \dd \sigma
      & = - \frac{1}{\delk} \int_\Gamma u w \rho \dd \sigma,
    \end{align}
  \end{subequations}
\end{Defn}
}

We note that if $u \in \Lp{2}( {0,T}; \Hil{1}( \O ) )$ then by the trace theorem $u \in \Lp{2}( {0,T}; \Hil{1/2}( \G ) )$.
We now show the well posedness of problem (\ref{eq:eps-problem}) in the sense of the following Theorem.
\begin{The}[Existence and uniqueness of a  bounded solution pair to (\ref{eq:eps-problem})]
  \label{thm:eps-problem}
  Given bounded, non-negative initial data $u_0$ and $w_0$, there exists a unique solution pair $(u,w)$ to {the systems~\eqref{eqn:wf_eps_problem} and \eqref{eqn:wf_eps_problem_neumann}}. Furthermore, we have that in the case of Dirichlet data 
  \begin{equation}
    \begin{aligned}
      & 0 \le u(x,t) \le \max( \| u_0 \|_{L^\infty(\Omega)}, u_D )
      && \mbox{ for a.e. } ( x,t ) \in \Omega \times (0,T) \\
      & 0 \le w(x,t) \le \| w_0 \|_{L^\infty(\Gamma)}
      && \mbox{ for a.e. } ( x,t ) \in \Gamma \times (0,T),
    \end{aligned}
  \end{equation}
  or in the case of Neumann data
  \begin{equation}
    \begin{aligned}
      & 0 \le u(x,t) \le \| u_0 \|_{L^\infty(\Omega)}
      && \mbox{ for a.e. } ( x,t ) \in \Omega \times (0,T) \\
      & 0 \le w(x,t) \le \| w_0 \|_{L^\infty(\Gamma)}
      && \mbox{ for a.e. } ( x,t ) \in \Gamma \times (0,T).
    \end{aligned}
  \end{equation}
\end{The}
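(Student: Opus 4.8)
The plan is to prove existence by a fixed-point / Galerkin argument combined with a regularisation of the nonlinearity, then establish the $L^\infty$-bounds and non-negativity by testing with truncation functions, and finally prove uniqueness by an energy estimate on the difference of two solutions. I would first set up the problem with a truncated, Lipschitz nonlinearity: replace $uw$ in \eqref{eqn:wf_eps_problem} by $\Phi_M(u)\Phi_M(w)$ where $\Phi_M(s) = \min(\max(s,0),M)$ for a fixed large $M$ (to be chosen larger than the eventual bounds), so the reaction term becomes globally Lipschitz and bounded. For fixed $w$ the $u$-equation is a linear parabolic equation with a Robin-type boundary term, and for fixed $u$ the $w$-equation is a linear parabolic equation on $\Gamma$; each is solvable by the standard Galerkin method in the Gelfand triples $\Hil{1}_{e_0}(\O) \hookrightarrow \Lp{2}(\O) \hookrightarrow \dual{\Hil{1}_{e_0}(\O)}$ and $\Hil{1}(\G)\hookrightarrow \Lp{2}(\G)\hookrightarrow\dual{\Hil{1}(\G)}$ (in the Dirichlet case one lifts the boundary data by the constant $u_D$ and solves for $u - u_D$). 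I would then apply a fixed-point argument — either Schauder on the map $(u,w)\mapsto(\tilde u,\tilde w)$ using the compact embeddings of Lemma~\ref{lem:trace-compact} and standard Aubin–Lions compactness (Lemma~\ref{lem:als-compact}) to get the needed compactness of the trace terms, or Banach contraction on a short time interval followed by continuation — to obtain a solution of the truncated problem.

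Next I would derive the a priori bounds. Testing the (truncated) $u$-equation with $\eta = (u - K)^+$ where $K = \max(\|u_0\|_{L^\infty(\Omega)}, u_D)$ in the Dirichlet case (so that $(u-K)^+ \in \Hil{1}_{e_0}(\O)$) gives $\frac{\delO}{2}\frac{d}{dt}\|(u-K)^+\|_{L^2(\Omega)}^2 + \|\nabla(u-K)^+\|_{L^2(\Omega)}^2 = -\frac{1}{\delk}\int_\Gamma \Phi_M(u)\Phi_M(w)(u-K)^+\,d\sigma \le 0$ once we know $w \ge 0$, since the right-hand side has the favourable sign; with $(u_0-K)^+ = 0$ this yields $u \le K$ a.e. Testing the $w$-equation with $\rho = (w - \|w_0\|_{L^\infty(\Gamma)})^+$ gives the bound $w \le \|w_0\|_{L^\infty(\Gamma)}$ because the reaction term is again of the right sign. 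For non-negativity, testing with $\eta = -u^- = \min(u,0)$ and $\rho = -w^-$ and using that the coupling terms vanish or have the correct sign on $\{u<0\}$, $\{w<0\}$ (here the truncation $\Phi_M$, which is zero for negative argument, is convenient) shows $u^- = w^- = 0$. Once these bounds are in hand, choosing $M$ larger than $K$ and $\|w_0\|_{L^\infty(\Gamma)}$ shows the solution of the truncated problem is in fact a solution of the original problem, and the bounds stated in the theorem hold. The Neumann case is identical except one takes $K = \|u_0\|_{L^\infty(\Omega)}$ and uses that the outer-boundary flux vanishes; the argument to justify differentiating $\|(u-K)^+\|_{L^2}^2$ uses the chain rule for functions in $\Lp{2}(0,T;\Hil 1)$ with time derivative in $\Lp{2}(0,T;\dual{\Hil 1})$.

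Finally, for uniqueness suppose $(u_1,w_1)$ and $(u_2,w_2)$ are two bounded non-negative solutions and set $\bar u = u_1 - u_2$, $\bar w = w_1 - w_2$. Testing the difference of the $u$-equations with $\bar u$ (legitimate since $\bar u \in \Hil{1}_{e_0}(\O)$ in the Dirichlet case) and the difference of the $w$-equations with $\bar w$, adding, one gets
\begin{align*}
  \frac{\delO}{2}\frac{d}{dt}\|\bar u\|_{L^2(\Omega)}^2 + \|\nabla \bar u\|_{L^2(\Omega)}^2
  + \frac{1}{2}\frac{d}{dt}\|\bar w\|_{L^2(\Gamma)}^2 + \delG\|\nabla_\Gamma \bar w\|_{L^2(\Gamma)}^2
  = -\frac{1}{\delk}\int_\Gamma (u_1 w_1 - u_2 w_2)(\bar u + \bar w)\,d\sigma.
\end{align*}
Writing $u_1 w_1 - u_2 w_2 = \bar u\, w_1 + u_2\, \bar w$ and using the $L^\infty$-bounds on $w_1$ and $u_2$, the right-hand side is estimated by $C\big(\|\bar u\|_{L^2(\Gamma)}^2 + \|\bar w\|_{L^2(\Gamma)}^2\big)$; the trace term $\|\bar u\|_{L^2(\Gamma)}^2$ is absorbed using the interpolated trace inequality (Proposition~\ref{prop:interpolated-trace}) into $\|\nabla\bar u\|_{L^2(\Omega)}^2 + C\|\bar u\|_{L^2(\Omega)}^2$ at the cost of a constant, and Grönwall's inequality then forces $\bar u = \bar w = 0$. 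I expect the main obstacle to be the compactness/continuity bookkeeping in the fixed-point step — in particular establishing strong convergence of the boundary traces $u_k|_\Gamma$ so that the nonlinear term passes to the limit — which is precisely what Lemmas~\ref{lem:trace-compact} and \ref{lem:als-compact} are designed to handle; a secondary technical point is justifying the use of truncation test functions and the associated chain rule in the Gelfand-triple setting, which is standard but must be done carefully given the boundary reaction term.
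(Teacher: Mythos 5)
Your overall architecture (truncate the nonlinearity, solve the truncated problem, recover the original problem via $L^\infty$ bounds obtained by testing with truncations, then prove uniqueness by an energy estimate) matches the paper's, and your a priori bound and non-negativity arguments are essentially identical to the ones given there. There are two genuine differences. First, the paper truncates only $w$ (replacing $uw$ by $uM(w)$, so the coupling stays linear in $u$ with a bounded coefficient) and asserts existence of the truncated problem directly by Galerkin plus energy estimates, whereas you truncate both variables to get a globally Lipschitz bounded nonlinearity and close existence with a Schauder or Banach fixed point; both are viable, and your double truncation arguably makes the fixed-point step cleaner at the cost of an extra layer. Second, and more substantively, your uniqueness proof is an $L^2$--Gr\"onwall argument that splits $u_1w_1-u_2w_2=\bar u\,w_1+u_2\,\bar w$ and absorbs the trace term via Proposition~\ref{prop:interpolated-trace}; this is correct, but it requires $w_1$ and the trace of $u_2$ to lie in $L^\infty$, so it yields uniqueness only within the class of \emph{bounded} non-negative weak solutions. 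The paper instead runs an $L^1$-contraction: it tests with $\psi_k'(e^u)$ and $\psi_k'(e^w)$ for smooth convex approximations of $|\cdot|$, passes to the limit, and uses the elementary sign inequality $(a_1b_1-a_2b_2)\bigl(\sgn(a_1-a_2)+\sgn(b_1-b_2)\bigr)\ge 0$ for $a_i,b_i\ge 0$ to kill the coupling term outright. That argument needs only non-negativity, not boundedness, so it gives uniqueness in the full class of weak solutions of Definition~\ref{def:ws-eps-problem}. Since that definition does not build in $L^\infty$ bounds, your proof as written establishes a slightly weaker uniqueness statement than the theorem claims; either strengthen the uniqueness step along the paper's lines, or note explicitly that you are proving uniqueness among bounded solutions.
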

\begin{proof} 
In the interests of brevity we give the full details of the proof only in the Dirichlet case. An analogous argument holds for the case of Neumann boundary conditions.

We start by replacing $w$ by $M(w)$ in the nonlinear coupling terms, where $M \colon \mathbb{R} \to \mathbb{R}^+$ is the cut off function
  \begin{equation}\label{eqn:M_def}
    M( r ) =
    \begin{cases}
      0 & r < 0 \\
      r &  0\le r  \le M \\
      M & r > M,
    \end{cases}
  \end{equation}
  with $M\geq \| w_0 \|_{L^\infty(\G)}$.   This leads us to consider the following
 problem. Find $(u,w)$, in the same spaces as Definition~\ref{def:ws-eps-problem}, that satisfy for all $( \eta, \rho ) \in H^1_{e_0} ( \Omega ) \times H^1( \Gamma )$ {and for a.e. $t\in(0,T)$}
  \begin{align}
    \label{eq:M-problem-a}
   \delO\dualp{\dual{\Hil{1}_{e_0}(\O)}}{\pdt u}{\eta}{\Hil{1}_0(\O)}     + \int_\Omega  \nabla u \cdot \nabla \eta \dd x
    & = - \frac{1}{\delk} \int_\Gamma u  M( w ) \eta \dd \sigma \\
    \label{eq:M-problem-b}
   \dualp{\dual{\Hil{1}(\G)}}{\pdt w}{\rho}{\Hil{1}(\G)}
    + \delG \int_\Gamma \nabla_\Gamma w \cdot \nabla_\Gamma \rho \dd \sigma
    & = - \frac{1}{\delk} \int_\Gamma u M( w ) \rho \dd \sigma.
  \end{align}
 {As $M(w)$ is bounded}, existence  for this problem with the cutoff nonlinearity can be shown via a Galerkin method and standard energy arguments.
We now show positivity of the solutions to \eqref{eq:M-problem-a}, \eqref{eq:M-problem-b}: $u,w \ge 0$ almost everywhere in their domains and that the trace of $u\ge0$ {on $\G$}. Testing \eqref{eq:M-problem-a} with $u_-= \min( u, 0 )$ and using the fact that $M(w) \ge 0$, we have
  \begin{equation*}
    \frac{\delO}{2} \frac{d}{dt} \int_\Omega (u_-)^2 \dd x
    + \int_\Omega  | \nabla (u_-) |^2 \dd x
    = - \frac{1}{\delk} \int_\Gamma (u_-)^2M(w) \dd \sigma
    \le 0.
  \end{equation*}
 Since $u_0 \ge 0$, we have $u \ge 0$ almost everywhere in $\Omega \times (0,T)$. Moreover, by the trace inequality, applied to $u_-$, we have that the trace of $u$ is non-negative.
 We next test \eqref{eq:M-problem-b} with $w_- = \min( w, 0 )$ to get
  \begin{align*}
    \frac{1}{2} \frac{d}{dt} \int_\Gamma (w_-)^2 \dd \sigma
    + \delG \int_\Gamma | \nabla_\Gamma (w_-) |^2 \dd \sigma
    = - \frac{1}{\delk} \int_\Gamma u M(w)w_- \dd \sigma
    =0,
  \end{align*}
  as $M(w)w_-=0$ from the definition of $M()$ \eqref{eqn:M_def}.
 Since $w_0 \ge 0$, we see that $w \ge 0$ almost everywhere in $\Gamma \times (0,T)$.
 We now show  pointwise bounds. Let $(u,w)$ be solutions of \eqref{eq:M-problem-a} and \eqref{eq:M-problem-b} and set $\theta^w = ( w - \| w_0 \|_{L^\infty(\G)})$. The variable $\theta^w$ satisfies
  \begin{equation*}
  \dualp{\dual{\Hil{1}(\G)}}{\pdt \theta^w}{\rho}{\Hil{1}(\G)}
    + \delG \int_\Gamma \nabla_\Gamma \theta^w \cdot \nabla_\Gamma \rho \dd \sigma
    = - \frac{1}{\delk} \int_\Gamma  u w  \rho \dd \sigma.
  \end{equation*}
  We test with $\rho = ( \theta^w )_+ \ge 0$ and recall that $u , w \ge 0$ then
  \begin{equation*}
    \frac{1}{2} \frac{d}{dt} \int_\Gamma ( \theta^w_+ )^2 \dd \sigma
    + \delG \int_\Gamma | \nabla \theta^w_+ |^2 \dd \sigma
    = - \frac{1}{\delk} \int_\Gamma u w \theta^w_+ \dd \sigma \le 0.
  \end{equation*}
  This implies that $\theta^w_+ = 0$ and hence $w \le \| w_0 \|_{L^\infty(\G)}$. The same argument for $u$ with $\theta^u=(u-\max(u_D,\Lpn{\infty}{u}{\O})$ so that $\theta^u_+\in\Hil{1}_{e_0}$, gives $u \le \max(u_D,\Lpn{\infty}{u}{\O})$.  As  $M$ was chosen such that $M\geq  \| w_0 \|_{L^\infty(\G)}$ and $w\ge 0$, we have that $M(w)=w$,  hence we have constructed a solution to (\ref{eqn:wf_eps_problem}) which satisfies 
  \begin{equation}\label{eq:infinity_bounds_eps_pb}
    0 \le u \le \max( \| u_0 \|_\infty, u_D) \quad \mbox{ and }
    0 \le w \le \| w_0 \|_\infty.
  \end{equation}

It remains to show that the solution is unique. To do this, we argue as follows.
  Let $(u_1,w_1)$ and $(u_2,w_2)$ be two (weak) solutions of {\eqref{eqn:wf_eps_problem}}. Defining $e^u:=u_1-u_2$ and $e^w:=w_1-w_2$ we 
  have that $e^u,e^w$ satisfy for all $(\eta,\rho)\in\Hil{1}_{e_0}(\O)\times\Hil{1}(\G)$ {and for a.e. $t\in(0,T)$}
  \begin{subequations}  \label{eqn:eps_pb_uniq_wf}
  \begin{align}
  \label{eqn:eps_pb_uniq_wf_u}
\delO\dualp{\dual{\Hil{1}_{e_0}(\O)}}{\pdt e^ u}{\eta}{\Hil{1}_{e_0}(\O)} +\int_\O\nabla e^u\cdot\nabla\eta \dd x&=-\frac{1}{\delk}\int_\G(u_1w_1-u_2w_2)\eta \dd \sigma\\
 \dualp{\dual{\Hil{1}(\G)}}{\pdt e^ w}{\rho}{\Hil{1}(\G)}\dd \sigma+\int_\G\delG\nabla_\G e^w\cdot\nabla_\G\rho \dd \sigma&=-\frac{1}{\delk}\int_\G(u_1w_1-u_2w_2)\rho \dd \sigma
    \label{eqn:eps_pb_uniq_wf_w}
  \end{align} 
  \end{subequations}
  {
Let $\psi:\Reals\to\Reals$ be a smooth convex function satisfying $\psi(0)=\psi^{\prime}(0)=0$. Setting $\eta=\psi^{\prime}(e^u)$ and $\rho=\psi^{\prime}(e^w)$ in (\ref{eqn:eps_pb_uniq_wf})  and combining the equations gives
  \begin{align}
\frac{\diff }{\diff t}\left(\int_\O\delO\psi(e^u) \dd x+\int_\G \psi(e^w) \dd \sigma\right)&+\int_\O\psi^{\prime\prime}(e^u)\lv\nabla e^u\rv^2\dd x+\int_\G\delG\psi^{\prime\prime}(e^w)\lv\nabla_\G e^w\rv^2\dd \sigma\\
\notag
&=-\frac{1}{\delk}\int_\G(u_1w_1-u_2w_2)\left(\psi^{\prime}(e^u)+\psi^{\prime}(e^w)\right)\dd \sigma.
\end{align}
Hence as $\psi$ is convex we have
\begin{align}
  \label{eq:4}
\frac{\diff }{\diff t}\left(\int_\O\delO\psi(e^u)\dd x+\int_\G \psi(e^w)\dd \sigma\right)\leq-\frac{1}{\delk}\int_\G(u_1w_1-u_2w_2)\left(\psi^{\prime}(e^u)+\psi^{\prime}(e^w)\right) \dd \sigma.
\end{align}
Integration in time gives
\begin{align}
  \label{eq:5}
\int_\O\delO\psi(e^u(\cdot,t))\dd x+\int_\G \psi(e^w(\cdot,t))\dd \sigma\leq-\frac{1}{\delk}\int_0^t\int_\G(u_1w_1-u_2w_2)\left(\psi^{\prime}(e^u)+\psi^{\prime}(e^w)\right) \dd \sigma \dd t,
\end{align}
as $e^u(\cdot,0)=0$ and $e^w(\cdot,0)=0$ and we have chosen $\psi$ such that $\psi(0)=0$.
Defining the function 
\[
\sgn(\eta)=
\begin{cases}
1\quad&\text{if }\eta>0\\
0\quad&\text{if }\eta=0\\
-1\quad&\text{if }\eta<0,
\end{cases}
\] 
we replace $\psi$ by a sequence of smooth functions $\psi_k$ such that
\begin{equation*}
\psi_k(x)\to\lv x\rv,\quad \psi^\prime_k(x)\to \sgn(x),\quad x\in \Reals,
\end{equation*}
pointwise
and pass to the limit ($k \rightarrow \infty$), which yields
} 
\begin{align}\label{eqn:uniqueness_RHS}
\int_\O\delO\lv e^u(\cdot,t)\rv \dd x+\int_\G \lv e^w(\cdot,t)\rv \dd \sigma\leq-\frac{1}{\delk}\int_0^t\int_\G(u_1w_1-u_2w_2)\left(\sgn(e^u)+\sgn(e^w)\right) \dd \sigma \dd t.
\end{align}
{
For $a_1,b_1,a_2,b_2\in\Reals^+$ it is easily verified that 
\[
(a_1b_1-a_2b_2)(\sgn(a_1-a_2)+\sgn(b_1-b_2))\geq 0,
\]
hence the right hand side of (\ref{eqn:uniqueness_RHS}) is non-positive
Thus  
  $\text{for a.e., }{t\in(0,T)}$
 \[
\left( \int_\O\delO\lv e^u\rv \dd x+\int_\G \lv e^w\rv \dd \sigma\right)=0,
 \] 
 }
 which completes the proof of uniqueness and hence the proof of the theorem.
\end{proof}

In the subsequent sections we will consider the limit problems obtained on sending  $\delO,\delG$ and $\delk$ to zero in \eqref{eq:eps-problem}. To this end we derive some estimates on the solution pair $(u,w)$ of {\eqref{eqn:wf_eps_problem}}, which we will use in the subsequent sections to deduce the existence of convergent subsequences which converge to solutions of the limit problems.
We note that the bounds hold for constants which are independent of $\delta_k, \delta_\Gamma$ and $\delta_\Omega$.

\begin{Lem}[Estimates for the solution of (\ref{eqn:wf_eps_problem}) and (\ref{eqn:wf_eps_problem_neumann})] \label{Lem:unif_est}
  The solution pair $(u,w)$ to (\ref{eqn:wf_eps_problem}) and (\ref{eqn:wf_eps_problem_neumann}) satisfy the following estimates,
\begin{equation}\begin{split}
\label{eqn:eps_pb_estimate_1}
    \delO\Lpn{\infty}{u}{(0,T);\Lp{2}(\O)}^2 +
    {2\Lpn{2}{\nabla u}{(0,T);\Lp{2}(\O)}^2}
 & \le \delO\int_\Omega u_0^2 \dd x + C_D\\
         \Lpn{\infty}{w}{(0,T);\Lp{2}(\G)}^2
      +     2\delG\Lpn{2}{\nabla_\G w}{(0,T);\Lp{2}(\G)}^2
    &
  \le 
  \int_\Gamma w_0^2 \dd \sigma,
  \end{split}
\end{equation}
where  $C_D\in\Reals^+$ depends on the Dirichlet boundary data $u_D$ and $C_D=0$ in the case of the Neumann boundary condition.
Furthermore, we have an estimate on the nonlinearity:
\begin{align}\label{eq:uw_estimate}
\frac{1}{\delk}\Lpn{1}{u w}{(0,T)\times\G}\leq \Lpn{1}{w_0}{\G}.
\end{align}
The following estimate on time translates of $u$ and $w$ along with Lemma \ref{lem:als-compact} will be used to deduce the necessary compactness
\begin{align}\label{eq:time_trans_estimate}
\delO\int_0^{T-\tau}\int_\O \left(u (\cdot,t+\tau)-u (\cdot,t)\right)^2 \dd x \dd t+\int_0^{T-\tau}\int_\G \left(w (\cdot,t+\tau)-w (\cdot,t)\right)^2 \dd \sigma \dd t\leq C\tau,
\end{align}
where the constant $C$ is independent of $\tau,\delO,\delG$ and $\delk$.
\end{Lem}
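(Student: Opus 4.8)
The plan is to derive all four estimates by testing the weak formulations \eqref{eqn:wf_eps_problem}, \eqref{eqn:wf_eps_problem_neumann} against suitable functions and using systematically the non-negativity and boundedness of $u,w$ from Theorem~\ref{thm:eps-problem}, together with the standard facts that $u\in\Lp{2}(0,T;\Hil{1}(\O))$ with $\pdt u$ in the relevant dual space implies $u\in C([0,T];\Lp{2}(\O))$ and $\tfrac{1}{2}\tfrac{d}{dt}\ltwon{u(t)}{\O}^2=\langle\pdt u(t),u(t)\rangle$, and likewise for $w$. First I would treat the surface equation. Choosing $\rho=w$ in \eqref{eqn:wf_eps_problem_w} makes the right-hand side $-\tfrac{1}{\delk}\int_\G uw^2\dd\sigma\le 0$ (as $u,w\ge 0$), so $\tfrac{d}{dt}\ltwon{w(t)}{\G}^2+2\delG\ltwon{\nabla_\G w(t)}{\G}^2\le 0$; integrating in time gives $\ltwon{w(t)}{\G}^2+2\delG\int_0^t\ltwon{\nabla_\G w(s)}{\G}^2\,\dd s\le\ltwon{w_0}{\G}^2$, which is the second line of \eqref{eqn:eps_pb_estimate_1}. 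Choosing instead $\rho\equiv 1$ kills the Laplace--Beltrami term and yields $\tfrac{d}{dt}\int_\G w=-\tfrac{1}{\delk}\int_\G uw$; integrating over $(0,T)$ and using $w(T)\ge 0$ and then $uw\ge 0$, $w_0\ge 0$ gives $\tfrac{1}{\delk}\Lpn{1}{uw}{(0,T)\times\G}=\int_\G w_0-\int_\G w(T)\le\Lpn{1}{w_0}{\G}$, i.e.\ \eqref{eq:uw_estimate}.

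For the bulk equation, in the Neumann case $\eta=u$ is an admissible test function in \eqref{eqn:wf_eps_problem_neumann}, giving $\tfrac{d}{dt}\bigl(\delO\ltwon{u}{\O}^2\bigr)+2\ltwon{\nabla u}{\O}^2\le 0$ and hence the first line of \eqref{eqn:eps_pb_estimate_1} with $C_D=0$. In the Dirichlet case I would instead test \eqref{eqn:wf_eps_problem} with $\eta=u-u_D\in\Hil{1}_{e_0}(\O)$ (admissible since $u=u_D$ on $\partial_0\O$, and $u_D$ is constant so also $\nabla(u-u_D)=\nabla u$). The reaction term is then $-\tfrac{1}{\delk}\int_\G u^2w+\tfrac{u_D}{\delk}\int_\G uw\le\tfrac{u_D}{\delk}\int_\G uw=-u_D\tfrac{d}{dt}\int_\G w$, the last equality using the $\rho\equiv1$ identity, so that $\tfrac{d}{dt}\bigl(\tfrac{\delO}{2}\ltwon{u-u_D}{\O}^2+u_D\int_\G w\bigr)+\ltwon{\nabla u}{\O}^2\le 0$. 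Integrating, discarding the non-negative term $u_D\int_\G w(t)$, and using $u_0\ge 0$ (so $\ltwon{u_0-u_D}{\O}^2\le\ltwon{u_0}{\O}^2+u_D^2\abs{\O}$), one obtains $2\Lpn{2}{\nabla u}{(0,T);\Lp{2}(\O)}^2\le\delO\ltwon{u_0}{\O}^2+C_D$ with $C_D$ collecting the $u_D$-dependent terms (bounded uniformly for $\delO$ bounded); the $\Lp{\infty}$-in-time bound on $\delO\ltwon{u}{\O}^2$ then follows either from the same differential inequality or directly from the pointwise bound $0\le u\le\max(\Linfn{u_0}{\O},u_D)$ of Theorem~\ref{thm:eps-problem}. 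This establishes \eqref{eqn:eps_pb_estimate_1}.

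The remaining, and main, point is the time-translate estimate \eqref{eq:time_trans_estimate}, where the difficulty is that the reaction term carries the factor $\tfrac{1}{\delk}$, which is singular as $\delk\to 0$: the way around this is that the only $\tfrac{1}{\delk}$-weighted quantity one controls is $\tfrac{1}{\delk}uw$ in $\Lp{1}$, via \eqref{eq:uw_estimate}, and everything it is paired against is bounded in $\Lp{\infty}$ by Theorem~\ref{thm:eps-problem}. Concretely, for a.e.\ $t\in(0,T-\tau)$ the difference $w(t+\tau)-w(t)$ lies in $\Hil{1}(\G)$ and equals $\int_t^{t+\tau}\pdt w(s)\,\dd s$ in $\dual{\Hil{1}(\G)}$, so that
\begin{equation*}
  \ltwon{w(t+\tau)-w(t)}{\G}^2=\int_t^{t+\tau}\langle\pdt w(s),\,w(t+\tau)-w(t)\rangle\,\dd s ;
\end{equation*}
substituting \eqref{eqn:wf_eps_problem_w} with $\rho=w(t+\tau)-w(t)$ expresses the right-hand side as the time integral of $-\delG\int_\G\nabla_\G w(s)\cdot\nabla_\G\bigl(w(t+\tau)-w(t)\bigr)\dd\sigma$ minus $\tfrac{1}{\delk}\int_\G u(s)w(s)\bigl(w(t+\tau)-w(t)\bigr)\dd\sigma$. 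Integrating over $t\in(0,T-\tau)$ and invoking Fubini — which bounds $\int_0^{T-\tau}\int_t^{t+\tau}(\cdot)\,\dd s\,\dd t$ by $\tau\int_0^T(\cdot)\,\dd s$ — I estimate the two terms separately: for the reaction term, $\abs{w(t+\tau)-w(t)}\le 2\Linfn{w_0}{\G}$ pointwise together with \eqref{eq:uw_estimate} gives a contribution at most $2\Linfn{w_0}{\G}\Lpn{1}{w_0}{\G}\,\tau$; for the gradient term, Cauchy--Schwarz on $\G$, Young's inequality and the surface energy bound $\delG\int_0^T\ltwon{\nabla_\G w(s)}{\G}^2\,\dd s\le\tfrac{1}{2}\ltwon{w_0}{\G}^2$ give a contribution $\lesssim\ltwon{w_0}{\G}^2\,\tau$. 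The term $\delO\int_0^{T-\tau}\int_\O(u(t+\tau)-u(t))^2$ is handled in exactly the same way, now using $u(t+\tau)-u(t)\in\Hil{1}_{e_0}(\O)$ as test function in \eqref{eqn:wf_eps_problem} (so the outer boundary contribution vanishes), the bound $\abs{u(t+\tau)-u(t)}\le 2\max(\Linfn{u_0}{\O},u_D)$ on $\G$ with \eqref{eq:uw_estimate} for the surface reaction term, and Young's inequality with the bulk bound on $\Lpn{2}{\nabla u}{(0,T);\Lp{2}(\O)}$ for the bulk gradient term. Adding the two contributions yields \eqref{eq:time_trans_estimate}, with a constant $C$ depending only on $T$, $\O$, $\G$, $u_D$ and the $\Lp{\infty}$-, $\Lp{1}$- and $\Lp{2}$-norms of the data, hence independent of $\tau$, $\delO$, $\delG$ and $\delk$; the one point to watch throughout is simply that no constant secretly involves a negative power of $\delk$, which is exactly what \eqref{eq:uw_estimate} and the maximum principle of Theorem~\ref{thm:eps-problem} guarantee.
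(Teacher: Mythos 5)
Your proof is correct and follows essentially the same route as the paper: testing with $w$ (and with $\rho\equiv 1$) for the surface energy and the $L^1$ bound on $\tfrac{1}{\delk}uw$, and controlling the time translates by writing the squared difference as an integral of the weak form against $w(t+\tau)-w(t)$ (resp.\ $u(t+\tau)-u(t)\in\Hil{1}_{e_0}(\O)$), then using Fubini, the $L^\infty$ bounds and \eqref{eq:uw_estimate} so that no negative power of $\delk$ survives. The only deviation is in the Dirichlet-case bulk estimate, where the paper tests with $u-\Du$ for the harmonic lift $\Du$ vanishing on $\G$ (so the reaction term keeps a sign), while you subtract the constant $u_D$ and absorb the resulting $\tfrac{u_D}{\delk}\int_\G uw$ term via the mass identity — both are valid and yield the same uniform bounds.
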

\begin{proof} The first estimate (\ref{eqn:eps_pb_estimate_1}) 
follows from a straightforward energy argument due to the non negativity of $u$ and $w$. Specifically, test with $(u-\Du, w)$ where $\Du$ satisfies $\lap\Du=0$ in $\O$, $\Du=0$ on $\G$ and $\Du=u_D$ on $\partial_0\Omega$ in the Dirichlet case (\ref{eqn:wf_eps_problem})  or simply with $(u,w)$ in the Neumann case (\ref{eqn:wf_eps_problem_neumann}).

For the estimate (\ref{eq:uw_estimate}) we have using the {non-negativity} of $u,w$
\begin{align*}
\frac{1}{\delk}\Lpn{1}{u w}{{\G \times (0,T)}}&=\frac{1}{\delk}\int_0^T\int_\G w u \dd \sigma \dd t\\
&=\int_0^T\int_\G -\pdt w \dd \sigma \dd t\\
&=\int_\G -w(\cdot,T)+w^0(\cdot) \dd \sigma\\
&\leq \Lpn{1}{w^0}{\G},
\end{align*}
where we have used the {non-negativity}  of $w$ in the last step.

For the estimate (\ref{eq:time_trans_estimate}) we argue as follows. For a fixed $\tau\in(0,T)$ and for $t\in[0,T-\tau)$ introducing the notation
$\pdtau f(t):=f(t+\tau)-f(t)$ we have using (\ref{eqn:wf_eps_problem})
\begin{align*}
\int_\G &\left(w (\cdot,t+\tau)-w (\cdot,t)\right)^2 \dd \sigma = \int_0^\tau\int_\G\pdt w(\cdot,t+s)\pdtau w(\cdot,t)\dd \sigma\dd s\\
&= \int_0^\tau\int_\G-\delG\nabla_\G w(\cdot,t+s)\cdot\nabla_\G\pdtau w(\cdot,t)- \frac{1}{\delk}[u w](\cdot,t+s)\pdtau w(\cdot,t)\dd \sigma\dd s.
\end{align*}
Integrating in time gives 
\begin{align}\label{eq:time_trans_estimate_pf_1}
&\int_0^{T-\tau}\int_\G \left(w (\cdot,t+\tau)-w (\cdot,t)\right)^2\dd \sigma \dd t\\
&= \int_0^\tau\int_0^{T-\tau}\int_\G-\delG\nabla_\G w(\cdot,t+s)\cdot\nabla_\G\pdtau w(\cdot,t)- \frac{1}{\delk}[u w](\cdot,t+s)\pdtau w(\cdot,t)\dd \sigma\dd t\dd s\notag\\
&\leq \int_0^\tau {2}\delG\Lpn{2}{\nabla_\G w}{{\G \times (0,T)}}^2+\Lpn{\infty}{\pdtau w}{{\G \times (0,T)}} \frac{1}{\delk}\Lpn{1}{u w}{{\G \times (0,T)}}\dd s,\notag
\end{align}
where we have used Young's inequality  in the last step. Applying the estimates (\ref{eq:infinity_bounds_eps_pb}), (\ref{eqn:eps_pb_estimate_1}) and (\ref{eq:uw_estimate}) in (\ref{eq:time_trans_estimate_pf_1}) yields the desired estimate for the second term in (\ref{eq:time_trans_estimate}). For the bound on the first term in (\ref{eq:time_trans_estimate}), we note that as $\pdtau u\in\Hil{1}_{e_0}(\O)$
\begin{align*}
\delO\int_\O &\pdtau u(\cdot,t)^2\dd x \\
&= \int_0^\tau\int_\O-\nabla u(\cdot,t+s)\cdot\nabla\pdtau u(\cdot,t)- \frac{1}{\delk}[u w](\cdot,t+s)\pdtau u(\cdot,t)\dd \sigma\dd s,
\end{align*}
from which the desired bound follows from an analogous calculation to \eqref{eq:time_trans_estimate_pf_1} together with the estimates (\ref{eq:infinity_bounds_eps_pb}), (\ref{eqn:eps_pb_estimate_1}) and (\ref{eq:uw_estimate}).
\end{proof}

\section{Fast reaction limit problem ($\delk=0$)}\label{sec:limit_parabolic}
We now show that for fixed $\delO,\delG>0$ as $\delk\to0$ the solution to (\ref{eq:eps-problem}) converges to a (weak) solution to the following  constrained parabolic limit problem. For convenience we work with $v=-w$ and set $v^0=-w^0$.
\begin{Pbm}[Problem for instantaneous reaction rate]\label{pbm:parabolic-problem}
Find {$\ubar \colon \Omega \times [0,T) \to \mathbb{R}^+$, $\vbar \colon \Gamma \times [0,T) \to \mathbb{R}^-$} such that
\begin{subequations}
  \label{eq:parabolic-problem}
  \begin{align}
    \delO\partial_t \ubar -  \Delta \ubar & = 0 && \mbox{ in } \Omega \times (0,T) \\
     \nabla \ubar \cdot \vec{\nu} +\partial_t \vbar - \delG \Delta_\Gamma \vbar & = 0 \quad \mbox{ and }\quad \vbar\in\beta(\ubar) && \mbox{ on } \Gamma \times (0,T) \\
    \ubar &= u_D \quad \mbox{ or }\quad \nabla {\ubar} \cdot \vec{\nu}_{\Omega}=0 && \mbox{ on } \partial_0 \Omega \times (0,T) \\
    \ubar(\cdot,0) & = u^0(\cdot) \ge 0 && \mbox{ in } \Omega \\
    \vbar(\cdot,0) & = v^0(\cdot) \le 0 && \mbox{ on } \Gamma.
  \end{align}
\end{subequations}
Here $\beta \colon \mathbb{R} \to \{0,1\}^{\mathbb{R}}$ is the set valued function (c.f., Figure \ref{Fig:beta})
\begin{equation}\label{eqn:beta_def}
  \beta( r ) =
  \begin{cases}
    \emptyset & \mbox{ if } r < 0 \\
    [ -\infty, 0 ] & \mbox{ if } r = 0 \\
    \{ 0 \} & \mbox{ if } r > 0.
  \end{cases}
\end{equation}
We consider (\ref{eq:parabolic-problem}) as a  parabolic equation with dynamic boundary conditions interpreted {as} a differential inclusion.
\end{Pbm}

\begin{figure}
\centering
\includegraphics{./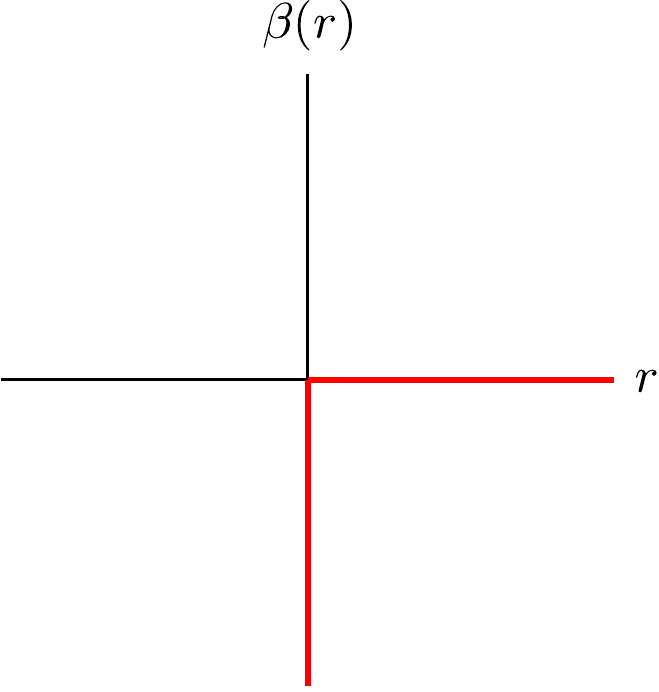}
\caption{Sketch of the function $\beta$ c.f., (\ref{eqn:beta_def})}\label{Fig:beta}
\end{figure}

{
In order to define a weak solution to (\ref{eq:parabolic-problem}) we define the Bochner spaces
\[
\mathcal{V}_{e_0}(\O)=\left\{v\in\Lp{2}\left({0,T};{\Hil{1}_{e_0}(\O)}\right) : \pdt v\in\Lp{2}\left({0,T};\dual{\Hil{1}_{e_0}(\O)}\right)\right\}
\]
and
\[
\mathcal{V}(\G)=\left\{v\in\Lp{2}\left({0,T};{\Hil{1}(\G)}\right) : \pdt v\in\Lp{2}\left({0,T};\dual{\Hil{1}(\G)}\right)\right\}.
\]
We will make use of the following function space
\begin{align*}
\mathcal{V}_{e_0}(\O,\G):=\Bigg\{v\in\mathcal{V}_{e_0}(\O) : v\vert_\G\in\mathcal{V}(\G)\Bigg\}.
\end{align*}

We note that similar spaces have been introduced for the weak formulation of a parabolic problems with dynamic boundary conditions,  \citep[see, for example,][]{calatroni2013global}.

\begin{Defn}[Weak solution of Problem \ref{pbm:parabolic-problem}]\label{def:ws-parabolic-problem}
\noindent
We say that a pair $(\ubar,\vbar)$ with $u\in \Lp{2}( {0,T} ; \Hil{1}_{e_{u_D}}(\O))\cap \Lp{\infty}( {0,T} ; \Lp{2}(\O))$ and $\vbar\in\Lp{2}( {0,T} ; \Hil{1}(\G))\cap \Lp{\infty}( {0,T} ; \Lp{2}(\G))$ {with $\ubar\geq0$ and $\vbar\leq0$} is a weak solution of Problem \ref{pbm:parabolic-problem} if for all $\eta\in\mathcal{V}_{e_0}(\O,\G)$ with  $\eta(\cdot,T)=0$, we have
\begin{multline}
  \int_0^T \Bigg(-\delO\dualp{\dual{\Hil{1}_{e_0}(\O)}}{\pdt\eta}{\ubar}{{\Hil{1}_{e_{u_D}}(\O)}}+ \int_\O\nabla \ubar\cdot\nabla \eta\dd x-\dualp{\dual{\Hil{1}(\G)}}{\pdt\eta}{\vbar}{{\Hil{1}(\G)}}\\
  + \int_\G\delG \nabla_\G \vbar\cdot\nabla_\G\eta\dd \sigma \Bigg) \dd t 
  = \int_\O \delta_\Omega u^0\eta(\cdot,0)\dd x
  + \int_\G v^0\eta(\cdot,0)\dd \sigma
\end{multline}
and
\begin{align}
\vbar\in\beta(\ubar)\quad\mbox{ a.e.} \mbox{ on }\G\times{(0,T)}.
\end{align}
We make the corresponding modifications to the function spaces for the Neumann boundary condition.
\end{Defn}
}

\begin{The}[Convergence of the solution of (\ref{eq:eps-problem}) to a solution of (\ref{eq:parabolic-problem})]
  \label{thm:parab-pbm}
  As $\delk\to0$ the  solution pair $(u,w)$ to {(\ref{eqn:wf_eps_problem})} converge (up to a subsequence) to a pair $(\bar{u},\bar{w})$ in the following topologies
\begin{align}
 u &\rightharpoonup \bar{u} \quad \mbox{ in } L^2( 0, T; H^1_{e_{u_D}}( \Omega ) ) \label{eqn:weak_conv_parab_u}\\
 \notag (  u &\rightharpoonup \bar{u} \quad \mbox{ in } L^2( 0, T; H^1( \Omega ) ),\quad \text{in the Neumann case}) \\
  w &\rightharpoonup \bar{w} \quad \mbox{ in } L^2( 0, T; H^1( \Gamma ) ),\\
    u &\rightarrow \bar{u} \quad \mbox{ in } \Lp{2}(\O\times{(0,T)}),\label{eqn:strong_conv_parab_u}\\
  w &\rightarrow \bar{w} \quad \mbox{ in } \Lp{2}(\G\times{(0,T)})\label{eqn:strong_conv_parab_w}.
  \end{align}
   Moreover, the pair $(\bar{u},\bar{v})$, with $\bar{v}=-\bar{w}$  are a weak solution to Problem{~\ref{pbm:parabolic-problem}}.
 \end{The}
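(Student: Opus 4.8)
The plan is to pass to the limit $\delk\to0$ in the weak formulation \eqref{eqn:wf_eps_problem} using the uniform estimates already collected in Lemma~\ref{Lem:unif_est}. First I would extract convergent subsequences: the bounds \eqref{eqn:eps_pb_estimate_1} give that $\{u\}$ is bounded in $L^2(0,T;H^1_{e_{u_D}}(\O))\cap L^\infty(0,T;L^2(\O))$ and $\{w\}$ in $L^2(0,T;H^1(\G))\cap L^\infty(0,T;L^2(\G))$, so up to a subsequence we obtain the weak limits \eqref{eqn:weak_conv_parab_u}--\eqref{eqn:strong_conv_parab_w}; the strong convergences in $L^2(\O\times(0,T))$ and $L^2(\G\times(0,T))$ follow from the time-translate estimate \eqref{eq:time_trans_estimate} together with the Aubin--Lions--Simon Lemma~\ref{lem:als-compact}, using criterion 2(b). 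Lemma~\ref{lem:trace-compact} then upgrades $u\to\bar u$ in $L^2(\O\times(0,T))$ to $u|_\G\to\bar u|_\G$ in $L^2(\G\times(0,T))$. The sign conditions $\bar u\ge0$ and $\bar v=-\bar w\le0$ pass to the limit since they are preserved under $L^2$ convergence.

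Next I would identify the limit equation. Take a test function $\eta\in\mathcal{V}_{e_0}(\O,\G)$ with $\eta(\cdot,T)=0$, use it in \eqref{eqn:wf_eps_problem_w} written for $w$ (equivalently for $v=-w$) and in \eqref{eqn:wf_eps_problem} for $u$, integrate by parts in time to move $\pdt$ onto $\eta$, picking up the initial-data terms $\int_\O\delO u^0\eta(\cdot,0)\dd x+\int_\G v^0\eta(\cdot,0)\dd\sigma$. Adding the two equations, the troublesome terms $-\tfrac1\delk\int_\G u w\,\eta\dd\sigma$ on the two right-hand sides cancel exactly (because the same test function $\eta$ appears in both, with the $w$-equation contributing $+\tfrac1\delk\int uw\,\eta$ after the sign flip $v=-w$), so in the summed identity the nonlinear term disappears and only linear terms in $(u,w)$ remain. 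Passing to the limit in these linear terms is immediate from the weak convergences, yielding the weak formulation in Definition~\ref{def:ws-parabolic-problem}. One also checks $\bar u\in\mathcal V_{e_0}(\O,\G)$-type regularity (i.e. $\bar u|_\G\in L^2(0,T;H^1(\G))$ with the relevant weak time derivative) so that the limiting objects live in the right spaces; this uses the summed equation to recover the weak time derivative of $\bar u|_\G+\bar v$ and hence the trace regularity.

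The main obstacle is establishing the inclusion $\bar v\in\beta(\bar u)$ a.e.\ on $\G\times(0,T)$, equivalently the complementarity $\bar u\ge0$, $\bar w\ge0$, $\bar u\,\bar w=0$ on $\G$. From \eqref{eq:uw_estimate} we have $\tfrac1\delk\|uw\|_{L^1(\G\times(0,T))}\le\|w_0\|_{L^1(\G)}$, so $\|uw\|_{L^1(\G\times(0,T))}\le\delk\|w_0\|_{L^1(\G)}\to0$; hence $uw\to0$ strongly in $L^1(\G\times(0,T))$. Combined with $u|_\G\to\bar u|_\G$ and $w\to\bar w$ strongly in $L^2(\G\times(0,T))$ (so that $uw\to\bar u\,\bar w$ in $L^1$), we identify $\bar u\,\bar w=0$ a.e.\ on $\G\times(0,T)$, which together with $\bar u\ge0,\ \bar w\ge0$ is precisely $\bar v=-\bar w\in\beta(\bar u)$ pointwise (reading off the three cases of \eqref{eqn:beta_def}: where $\bar u>0$ one gets $\bar w=0$ hence $\bar v=0$; where $\bar u=0$ one only has $\bar v\le0$, which is the interval $[-\infty,0]$; and $\bar u<0$ cannot occur). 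The care needed here is making sure the product converges in the right sense — since both factors converge strongly in $L^2$, their product converges in $L^1$, which suffices to pass to the limit in the constraint; no weak-strong pairing subtlety arises because the nonlinear term has been eliminated from the equation itself and only reappears, harmlessly, as the $L^1$-estimate \eqref{eq:uw_estimate} used to kill it in the constraint.

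Finally I would collect the pieces: the limit pair $(\bar u,\bar v)$ has the claimed regularity, satisfies the integral identity of Definition~\ref{def:ws-parabolic-problem} with the correct initial-data contributions, obeys $\bar u\ge0$, $\bar v\le0$, and satisfies $\bar v\in\beta(\bar u)$ a.e.\ on $\G\times(0,T)$; hence $(\bar u,\bar v)$ is a weak solution of Problem~\ref{pbm:parabolic-problem}, and $(\bar u,\bar w)=(\bar u,-\bar v)$ is the asserted limit. The Neumann case is identical after replacing $\Hil{1}_{e_{u_D}}(\O),\Hil{1}_{e_0}(\O)$ by $\Hil{1}(\O)$ throughout and noting $C_D=0$ in \eqref{eqn:eps_pb_estimate_1}.
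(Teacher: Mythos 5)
Your argument follows the paper's proof essentially verbatim: the same uniform estimates of Lemma~\ref{Lem:unif_est} together with the time-translate bound and the Aubin--Lions--Simon lemma give the weak and strong limits, combining the two weak formulations with a common test function eliminates the $\delta_k^{-1}\int_\Gamma u w\,\eta$ term, and the complementarity $\bar u\,\bar w=0$ follows from the $L^1$ bound \eqref{eq:uw_estimate} combined with the strong convergence of $w$ and of the trace of $u$ on $\Gamma$. The only superfluous step is your proposed check that $\bar u|_\Gamma$ has $L^2(0,T;H^1(\Gamma))$-type regularity: Definition~\ref{def:ws-parabolic-problem} places the test functions, not the solution, in $\mathcal{V}_{e_0}(\Omega,\Gamma)$, so no such verification is needed (and none is made in the paper).
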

\begin{proof} 
In the interests of brevity we give the details for the Dirichlet boundary condition case. The Neumann case is handled similarly.

From standard weak compactness arguments \eqref{eq:weak-conv} together with the estimate \eqref{eqn:eps_pb_estimate_1}, we can extract a subsequence which we will still denote $( u, w )$ such that
\begin{align*}
  u &\rightharpoonup \bar{u} \quad \mbox{ in } L^2( 0, T; H^1_{e_{u_D}}( \Omega ) ), \\
  w &\rightharpoonup \bar{w} \quad \mbox{ in } L^2( 0, T; H^1( \Gamma ) ).
\end{align*}
From the Aubin-Lions-Simon compactness theory (Lemma~\ref{lem:als-compact}), the estimate on time translates (\ref{eq:time_trans_estimate}) means we can extract a subsequence which we will still denote $( u, w )$ such that
\begin{align*}
    u &\rightarrow \bar{u} \quad \mbox{ in } \Lp{2}(\O\times{(0,T)}),\\
  w &\rightarrow \bar{w} \quad \mbox{ in } \Lp{2}(\G\times{(0,T)}).
\end{align*}
{
We now show the pair $(\bar{u},\bar{v})$, with $\bar{v}=-\bar{w}$, are a weak solution to Problem \ref{pbm:parabolic-problem}.  We start by noting that for all $\eta\in \mathcal{V}_{e_0}(\O,\G)$ with $\eta(\cdot,T)=0$, we have
\begin{multline*}
  \int_0^T- \delO\dualp{}{ \pdt\eta}{u}{{\Hil{1}_{e_{u_D}}(\O)}}+ \int_\O\nabla u\cdot\nabla \eta\dd x\dd t-{\delO}\int_\O u^0\eta(\cdot,0)\dd x =\int_0^T\int_\G-\frac{1}{\delk}u w\eta\dd \sigma\dd t \\
  = \int_0^T- \dualp{}{\pdt \eta}{w}{\Hil{1}(\G)} +\delG\int_\G\nabla_\G w\cdot\nabla_\G\eta\dd \sigma\dd t-\int_\G w^0\eta(\cdot,0)\dd \sigma.
\end{multline*}
Letting $\delk\to0$ the convergence results (\ref{eqn:weak_conv_parab_u})---(\ref{eqn:strong_conv_parab_w}) give
\begin{multline*}
\int_0^T \left( - \delO\dualp{}{ \pdt\eta}{\bar u}{{\Hil{1}_{e_{u_D}}(\O)}}+\int_\O\nabla \bar{u}\cdot\nabla \eta\dd x \right) \dd t-{\delO}\int_\O u^0\eta(\cdot,0)\dd x\\
=\int_0^T \left(- \dualp{}{\pdt \eta}{\bar w}{\Hil{1}(\G)} +\delG\int_\G\nabla_\G \bar{w}\cdot\nabla_\G\eta\dd \sigma \right) \dd t-\int_\G w^0\eta(\cdot,0)\dd \sigma,
\end{multline*}
and hence with $\bar{v}=-\bar{w}$
\begin{multline}\label{eqn:barubarv}
  \int_0^T \left( - \delO\dualp{}{ \pdt\eta}{\bar u}{{\Hil{1}_{e_{u_D}}(\O)}}+\int_\O\nabla \bar{u}\cdot\nabla \eta\dd x -\dualp{}{\pdt \eta}{\bar v}{\Hil{1}(\G)}+\delG\int_\G\nabla_\G \bar{v}\cdot\nabla_\G\eta\dd \sigma \right) \dd t \\
  = {\delO}\int_\O u^0\eta(\cdot,0)\dd x + \int_\G v^0\eta(\cdot,0)\dd \sigma.
\end{multline}
}
It remains to show that $\bar{v}\in\beta(\bar{u})$. As $u, w\geq 0$ for all $\delk$, we have $\bar{u}\geq0$ and $\bar{v}=-\bar{w}\leq 0$.  
Moreover from (\ref{eq:uw_estimate})  we have
\begin{align*}
\int_0^T\int_\G u w\leq \delk\Lpn{1}{w_0}{\G},
\end{align*}
and hence the strong convergence results (\ref{eqn:strong_conv_parab_u}) and (\ref{eqn:strong_conv_parab_w}) imply
\begin{align*}
\bar{u}\bar{v}=-\bar{u}\bar{w}=0\quad\mbox{a.e.  in }\G\times{(0,T)}.
\end{align*}
Thus the limit pair $(\bar{u},\bar{v})$ are a weak solution to Problem \ref{pbm:parabolic-problem} in the sense of Definition \ref{def:ws-parabolic-problem}.
\end{proof}

\begin{Rem}[Uniqueness of the solution to Problem \ref{pbm:parabolic-problem}.]
Theorem \ref{thm:parab-pbm} ensures existence of a solution to Problem \ref{pbm:parabolic-problem}. However we are unable at present to prove uniqueness. 
In particular, the strategy employed  for the proof of uniqueness to the limiting problems \ref{pbm:PdBC-limit-problem} and \ref{pbm:EdBC-limit-problem} does not seem applicable in this case.
\end{Rem}

\section{Parabolic limit problem with dynamic boundary condition ($\delk=\delG=0$)}
\label{sec:parab-limit-pb}

We now present a rigorous derivation of the parabolic problem with dynamic boundary conditions presented in \S \ref{sec:limit-model} as a limit 
of (\ref{eq:eps-problem}). Specifically we show that for fixed $\delO>0$, in the limit  $\delk=\delG\to0$ the unique solution of the problem (\ref{eq:eps-problem})
converges to the unique solution of the following problem.

\begin{Pbm}\label{pbm:PdBC-limit-problem}
  Find {$\tu \colon \Omega \times [0,T) \to \mathbb{R}^+$ and $\tv \colon \Gamma \times [0,T) \to \mathbb{R}^-$} such that
\begin{subequations}
\label{eq:PdBC-limit-problem}
  \begin{align}
    \delO\pdt \tu- \Delta \tu  & = 0 && \mbox{ in } \Omega \times (0,T) \\
    \nabla \tu \cdot \vec{\nu} + \partial_t \tv  & = 0 && \mbox{ on } \Gamma \times (0,T) \\
    \tv & \in \beta(\tu) && \mbox{ on } \Gamma \times (0,T) \\
    \tu  & = u_D\quad\text{ or }\quad\nabla\tu\cdot\normal=0 && \mbox{ on } \partial_0 \Omega \times (0,T) \\
        \tu(\cdot, 0)  & = u_0(\cdot)\geq 0 && \mbox{ on } \O\\
    \tv(\cdot, 0) & = v_0(\cdot)\leq 0 && \mbox{ on } \Gamma,
  \end{align}
\end{subequations}
where $\beta \colon \mathbb{R} \to \{0,1\}^{\mathbb{R}}$ is the set valued function defined in (\ref{eqn:beta_def}).
\end{Pbm}

{
In order to define a weak solution of  Problem{~\ref{pbm:PdBC-limit-problem}} we introduce the space
\[
\Hil{1}\left({0,T};\Hil{1}_{e_0}(\O)\right):=\left\{v\in\Lp{2}\left({0,T};\Hil{1}_{e_0}(\O)\right):\pdt v\in\Lp{2}\left({0,T};\Hil{1}_{e_0}(\O)\right)\right\}.
\]

\begin{Defn}[Weak solution of  (\ref{eq:PdBC-limit-problem})]\label{def:ws-PdBC-limit-problem}

\noindent
  We say a function pair $(\tu,\tv)$ with $\tu \in L^2( 0,T ; H^1_{e_{u_D}}(\Omega) )\cap \Lp{\infty}( {0,T} ; \Lp{2}(\O))$ and $\tv \in \Lp{\infty}( 0,T ; L^2( \Gamma ) )$ is a weak solution of \eqref{eq:PdBC-limit-problem}, if for all $\eta\in\Hil{1}\left({0,T} ;\Hil{1}_{e_0}(\O)\right)$  with $\eta(\cdot,T)=0$, we have
  \begin{equation}\label{eqn:PdBC-limit-weak-soln}
    \begin{aligned}
      &\int_0^T \left( \int_\O- \delO\tu\pdt\eta + \nabla \tu \cdot \nabla \eta \dd x
      + \int_\Gamma -\tv \pdt\eta \dd \sigma \right) \dd t
      = {\delO}\int_\O u^0\eta(\cdot,0)\dd x
      + \int_\G v^0\eta(\cdot,0)\dd \sigma \\
      &\mbox{and } \quad \tv \in \beta( \tu ) \mbox{ a.e. in } \Gamma \times (0,T).
    \end{aligned}
  \end{equation}
  We make the obvious modifications for the Neumann case.
  \end{Defn}
  }
\begin{The}[Convergence of the solution of (\ref{eq:eps-problem}) to a solution of (\ref{eq:PdBC-limit-problem})]
  \label{thm:PdBC-pbm}
  As $\delk=\delG\to0$ the  solution pair $(u,w)$ to {(\ref{eqn:wf_eps_problem})} converge  to a pair $(\tu,\tw)$ in the following topologies
\begin{align}
 u &\rightharpoonup \tu \quad \mbox{ in } L^2( 0, T; H^1_{e_{u_D}}( \Omega ) )\label{eqn:wcuh1pbcl}\\
\notag(u &\rightharpoonup \tu \quad \mbox{ in } L^2( 0, T; H^1( \Omega ) ) \text{ in the Neumann case})\\
  w &\rightharpoonup \tw \quad \mbox{ in } L^2( 0, T; \Lp{2}( \Gamma ) ),\label{eqn:weak_conv_PdBC_w}
\\
    u &\rightarrow \tu \quad \mbox{ in } \Lp{2}(\O\times{(0,T)}),\label{eqn:strong_conv_PdBC_u} \\
    u|_\Gamma &\rightarrow \tu|_\Gamma \quad \mbox{ in } \Lp{2}(\G\times{(0,T)}).\label{eqn:strong_conv_PdBC_u_G}
  \end{align}
   Moreover, the pair $\tu,\tv$, with $\tv=-\tw$  are the unique weak solution to \eqref{eq:PdBC-limit-problem} in the sense of Definition (\ref{eqn:PdBC-limit-weak-soln}).
 \end{The}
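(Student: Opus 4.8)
The plan is to mirror the structure of the proof of Theorem~\ref{thm:parab-pbm}: use the $\delk$- and $\delG$-independent bounds of Lemma~\ref{Lem:unif_est} to extract a limit pair $(\tu,\tw)$, identify it as a weak solution of \eqref{eq:PdBC-limit-problem}, and then prove uniqueness for Problem~\ref{pbm:PdBC-limit-problem} by a Baiocchi-type (time-integrated) energy argument, so that the whole family converges, not merely a subsequence.

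\textbf{Compactness.} Since $\delO>0$ is fixed, estimate \eqref{eqn:eps_pb_estimate_1} bounds $u$ uniformly in $L^2(0,T;H^1(\Omega))\cap L^\infty(0,T;L^2(\Omega))$ and $w$ in $L^\infty(0,T;L^2(\Gamma))$; the surface gradient bound degenerates, so only $\sqrt{\delG}\,\|\nabla_\Gamma w\|_{L^2(\Gamma\times(0,T))}\le C$ survives. Weak compactness gives \eqref{eqn:wcuh1pbcl}--\eqref{eqn:weak_conv_PdBC_w}. Dividing the time-translate estimate \eqref{eq:time_trans_estimate} by the fixed $\delO$ and using $H^1(\Omega)\hookrightarrow\hookrightarrow L^2(\Omega)$, criterion 2(b) of Lemma~\ref{lem:als-compact} yields $u\to\tu$ in $L^2(\Omega\times(0,T))$, i.e.\ \eqref{eqn:strong_conv_PdBC_u}; Lemma~\ref{lem:trace-compact} then upgrades this to the trace convergence \eqref{eqn:strong_conv_PdBC_u_G}. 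The sign conditions $\tu\ge0$, $\tv=-\tw\le0$, the Dirichlet condition $\tu=u_D$ on $\partial_0\Omega$, and the memberships $\tu\in L^\infty(0,T;L^2(\Omega))$, $\tv\in L^\infty(0,T;L^2(\Gamma))$ all pass to the (weak-$*$) limit.

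\textbf{The limiting equation and constraint.} I would subtract the surface equation \eqref{eqn:wf_eps_problem_w} (tested with $\eta|_\Gamma$) from the bulk equation (tested with $\eta$), so that the coupling terms $\tfrac1\delk\int_\Gamma uw\eta$ cancel exactly; taking $\eta$ smooth with $\eta(\cdot,T)=0$ and $\eta=0$ on $\partial_0\Omega$, integrating in time and integrating by parts in time gives, with $v=-w$,
\begin{equation*}
\int_0^T\!\!\Big(-\delO\!\int_\Omega u\pdt\eta + \int_\Omega\nabla u\cdot\nabla\eta - \int_\Gamma v\pdt\eta + \delG\!\int_\Gamma\nabla_\Gamma v\cdot\nabla_\Gamma\eta\Big)\dd t = \delO\!\int_\Omega u^0\eta(\cdot,0)\dd x + \int_\Gamma v^0\eta(\cdot,0)\dd\sigma .
\end{equation*}
Passing $\delG=\delk\to0$ is routine: the first two terms by the strong/weak convergence of $u$, the $v$-term by \eqref{eqn:weak_conv_PdBC_w}, and the last term vanishes since $\delG\|\nabla_\Gamma v\|_{L^2}\le\sqrt{\delG}\,C\to0$ for smooth $\eta$. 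Since every remaining term is continuous on $H^1(0,T;H^1_{e_0}(\Omega))$, a density argument extends the identity to all admissible $\eta$, giving \eqref{eqn:PdBC-limit-weak-soln}. For the constraint, \eqref{eq:uw_estimate} gives $\|uw\|_{L^1(\Gamma\times(0,T))}\le\delk\|w_0\|_{L^1(\Gamma)}\to0$, so $\int uw\phi\to0$ for every $\phi\in L^\infty$; on the other hand $u|_\Gamma\to\tu|_\Gamma$ strongly and $w\rightharpoonup\tw$ weakly in $L^2(\Gamma\times(0,T))$ force $\int uw\phi\to\int\tu\tw\phi$, hence $\tu\tw=0$ a.e., which together with $\tu\ge0$, $\tv=-\tw\le0$ is exactly $\tv\in\beta(\tu)$ for $\beta$ as in \eqref{eqn:beta_def}.

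\textbf{Uniqueness (the main obstacle).} Let $(\tu_i,\tv_i)$ be two weak solutions with the same data, $e^u=\tu_1-\tu_2\in L^2(0,T;H^1_{e_0}(\Omega))$, $e^v=\tv_1-\tv_2$; subtracting the weak formulations cancels the initial-data terms. I would test with $\eta(x,t):=\int_t^T e^u(x,s)\dd s$, which lies in $H^1(0,T;H^1_{e_0}(\Omega))$, satisfies $\eta(\cdot,T)=0$ and $\pdt\eta=-e^u$. Using $e^u=-\pdt\eta$, the bulk gradient term integrates to $\tfrac12\|\nabla\eta(\cdot,0)\|_{L^2(\Omega)}^2$ and the identity collapses to
\begin{equation*}
\delO\|e^u\|_{L^2(\Omega\times(0,T))}^2 + \tfrac12\|\nabla\eta(\cdot,0)\|_{L^2(\Omega)}^2 + \int_0^T\!\!\int_\Gamma e^v e^u \dd\sigma\dd t = 0 .
\end{equation*}
Since $\beta$ is a monotone graph and $\tv_i\in\beta(\tu_i)$, the surface integrand $(\tv_1-\tv_2)(\tu_1-\tu_2)$ is pointwise nonnegative, so all three terms vanish; hence $e^u=0$, and feeding this back into the weak form leaves $\int_0^T\int_\Gamma e^v\pdt\eta\dd\sigma\dd t=0$ for all admissible $\eta$, which with $e^v(\cdot,0)=0$ gives $e^v=0$. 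Uniqueness of the limit problem forces the whole family $(u,w)$ to converge. I expect the genuinely delicate points to be (i) the trace regularity of test functions in deriving the limit equation — one works with smooth $\eta$ and closes by density, since $\eta|_\Gamma$ only lies in $H^{1/2}(\Gamma)$ and the loss of the $\delG$ estimate prevents testing the surface equation with arbitrary $H^1_{e_0}(\Omega)$-traces — and (ii) the passage to the limit in the constraint $uw=0$, where $w$ converges only weakly and one must combine this with the strong trace convergence of $u$ and the $L^1$-smallness of $uw$.
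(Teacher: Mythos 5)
Your proposal is correct and follows essentially the same route as the paper's proof: compactness via Lemma~\ref{Lem:unif_est} with Lemmas~\ref{lem:als-compact} and~\ref{lem:trace-compact}, cancellation of the $\delta_k^{-1}$ coupling term by combining the bulk and surface equations and passing to the limit for smooth test functions followed by density, identification of $\tv\in\beta(\tu)$ from the $L^1$ bound \eqref{eq:uw_estimate} together with strong trace convergence of $u$ and weak convergence of $w$, and uniqueness by testing with $\int_t^T e^u$ and the monotonicity of $\beta$. The only cosmetic differences are that the paper sends the $\delta_\Gamma$ term to zero by moving it onto the smooth test function as $-\delta_\Gamma\int_\Gamma w\,\Delta_\Gamma\eta$ rather than by your direct estimate $\delta_\Gamma\norm{\nabla_\Gamma w}_{L^2}\le C\sqrt{\delta_\Gamma}$, and it makes your final step explicit by testing with time-integrals of harmonic extensions of arbitrary $\phi\in L^2(0,T;H^{1/2}(\Gamma))$ to conclude $\theta^{\tv}=0$ in $L^2\bigl(0,T;\dual{H^{1/2}(\Gamma)}\bigr)$.
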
  
  \begin{proof}
  As in the proof of Theorem \ref{thm:parab-pbm}, the uniform estimates of Lemma \ref{Lem:unif_est}  together with the compactness results of Lemma~\ref{lem:als-compact} and Lemma~\ref{lem:trace-compact} imply the weak and strong convergence results given in the theorem. 
  
   We now show that the limit pair $(\tu,\tv)$, with $\tv=-\tw$ are a weak solution of (\ref{eq:PdBC-limit-problem}).
  We start by noting that for all $\eta\in C^\infty(\O\times(0,T))$ with $\eta=0$ on $\partial_0\Omega\times(0,T)$ and $\eta(\cdot,T)=0$, we have
\begin{align*}
  & \int_0^T \int_\O- \delO u\pdt \eta+\nabla u\cdot\nabla \eta\dd x\dd t-{\delO}\int_\O u^0\eta(\cdot,0)\dd x    =\int_0^T\int_\G-\frac{1}{\delk}u w\eta\dd \sigma\dd t \\
  & \qquad =\int_0^T\int_\G- w\pdt\eta +\delG\nabla_\G w\cdot\nabla_\G\eta\dd \sigma\dd t-\int_\G w^0\eta(\cdot,0)\dd \sigma\\
  & \qquad =\int_0^T\int_\G- w\pdt\eta -\delG w\lap_\G\eta\dd \sigma\dd t-\int_\G w^0\eta(\cdot,0)\dd \sigma
\end{align*}

Letting $\delk=\delG\to0$, the convergence results (\ref{eqn:wcuh1pbcl})---(\ref{eqn:strong_conv_PdBC_u}) give
\begin{align*}
\int_0^T \int_\O- \delO \tu\pdt \eta+\nabla \tu\cdot\nabla \eta\dd x\dd t-{\delO}\int_\O u^0\eta(\cdot,0)\dd x&=\int_0^T\int_\G- \tw\pdt\eta\dd \sigma\dd t-\int_\G w^0\eta(\cdot,0)\dd \sigma,
\end{align*}
and hence with $\bar{v}=-\bar{w}$, we infer that
\begin{align}\label{eqn:barubarv2}
\int_0^T \int_\O- \delO \tu\pdt \eta+\nabla \tu\cdot\nabla \eta\dd x\dd t-{\delO}\int_\O u^0\eta(\cdot,0)\dd x-\int_0^T\int_\G\tv\pdt\eta\dd \sigma\dd t-\int_\G v^0\eta(\cdot,0)\dd \sigma=0.
\end{align}
A density argument yields that the above holds for all test functions $\eta$ in the spaces of Definition \ref{def:ws-PdBC-limit-problem}.
As $u,w\geq 0$ we have $\tu\geq0,\tv=-\tw\leq0$.
To check $\tv\in\beta(\tu)$ it remains to show that $\int_\G\tu\tv=0$.
This follows since
\begin{align*}
  \int_0^T \int_\G \tu\tv \dd\sigma \dd t
  & = \int_0^T \int_\G -\tu\tw \dd\sigma \dd t \\
  & = -\lim_{\delta_k, \delta_\Gamma \to 0} \int_0^T \int_\G ( \tu - u ) \tw + u ( \tw - w ) + u w \dd\sigma \dd t = 0,
\end{align*}
where we have used that the first term on the right hand side is zero since {$u \to \tu$ and $\tw\in L^2( 0, T; L^2( \Gamma ) )$} \eqref{eqn:strong_conv_PdBC_u_G}, \eqref{eqn:eps_pb_estimate_1}, the second term is zero since {$w \rightharpoonup \tw$} and $u$ is bounded in $L^2( 0, T; L^2(\Gamma) )$ \eqref{eqn:weak_conv_PdBC_w} and the final term is zero from the estimate \eqref{eq:uw_estimate}.

  To prove that the solution is unique we argue as follows. Let $(\tu_1,\tv_1)$ and $(\tu_2,\tv_2)$ be solutions of (\ref{eq:PdBC-limit-problem}) in the sense of Definition \ref{def:ws-PdBC-limit-problem}. We define $\theta^\tu(\cdot,t):=(\tu_1(\cdot,t)-\tu_2(\cdot,t)), \theta^\tv(\cdot,t):=(\tv_1(\cdot,t)-\tv_2(\cdot,t))$. 
  The pair $(\theta^\tu,\theta^\tv)$ satisfy
  \begin{align}\label{eqn:barthetaubarthetav}
\int_0^T \int_\O- \delO \theta^\tu\pdt \eta+\nabla \theta^\tu\cdot\nabla \eta\dd x\dd t-\int_0^T\int_\G\theta^\tv\pdt\eta\dd \sigma\dd t=0,
\end{align}
for all {$\eta\in\Hil{1}\left(0,T;\Hil{1}_{e_0}(\O)\right)$ with $\eta(\cdot,T)=0$}. For $t\in{(0,T)}$ we define $\theta^\tz(\cdot,t)=\int_t^T\theta^\tu(\cdot,s)\dd s$. Noting that $\theta^\tz$ is an admissible test function, we set $\eta=\theta^\tz$ in (\ref{eqn:barthetaubarthetav}) which gives
  \begin{equation*}
\delO\int_0^T \int_\O(\theta^\tu)^2\dd x\dd t-\int_0^T\frac{1}{2}\frac{\diff}{\diff t}\int_\O\lv\nabla\theta^\tz\rv^2\dd x\dd t+\int_0^T\int_\G\theta^\tv\theta^\tu\dd \sigma\dd t=0.
\end{equation*}
{
As $\theta^\tz(\cdot,T)=0$ we have
 \begin{equation*}
\delO\int_0^T \int_\O(\theta^\tu)^2\dd x\dd t+\frac{1}{2}\int_0^T\int_\O\lv\nabla\theta^\tu\rv^2\dd x\dd t+\int_0^T\int_\G\left(\tv_1-\tv_2\right)\left(\tu_1-\tu_2\right)\dd \sigma\dd t=0.
\end{equation*}
Recalling that $\tv_i\in\beta(\tu_i),i=1,2$, the monotonicity of $\beta$ gives}
 \begin{equation*}
 \Lpn{2}{\theta^\tu}{(0,T);\Hil{1}(\O)}^2=0.
\end{equation*}
Finally, (\ref{eqn:barthetaubarthetav}) and the above bound yield
\begin{equation*}
\int_0^T\int_\G\theta^\tv\pdt\eta\dd \sigma\dd t=0
\end{equation*}
for all $\eta$ that are admissible test functions in the sense of Definition \ref{def:ws-PdBC-limit-problem}. For any $\phi\in\Lp{2}{(0,T;\Hil{1/2}(\G))}$ we define $\mathbb{D}\phi$ such that
$\mathbb{D}\phi=\phi$ on $\G$, $\lap\mathbb{D}\phi=0$ in $\O$ and $\mathbb{D}\phi=0$ on $\partial_0\O$. Then we may take  $\eta(\cdot,t)=\int_t^T\mathbb{D}\phi(\cdot,s)\dd s$ as a test function in the above which gives 
\begin{equation*}
\int_0^T\int_\G\theta^\tv\phi\dd \sigma\dd t=0,
\end{equation*}
for all $\phi\in\Lp{2}{({0,T};\Hil{1/2}(\G))}$. Hence
\begin{equation*}
  \| \theta^\tv \|_{L^2\left( {0,T};\dual{\Hil{1/2}(\G)} \right)} = 0
\end{equation*}
which completes the proof of the theorem.
\end{proof}

\section{Elliptic limit problem with dynamic boundary condition ($\delO=\delG=\delk=0$)}
\label{sec:elliptic-limit-pb}

We now present a rigorous derivation of the elliptic problem with dynamic boundary conditions presented in \S \ref{sec:limit-model} as a limit 
of (\ref{eq:eps-problem}). As mentioned in \S \ref{sec:limit-model} we will only consider the case of Dirichlet boundary data. Specifically we show that as $\delO=\delG=\delk\to0$ the unique solution to (\ref{eq:eps-problem}) with Dirichlet boundary data, converges to the unique solution of the following problem.

\begin{Pbm}\label{pbm:EdBC-limit-problem}
Find {$\hu \colon \Omega \times (0,T) \to \mathbb{R}^+$ and $\hv \colon \Gamma \times [0,T) \to \mathbb{R}^-$} such that
\begin{subequations}
\label{eq:EdBC-limit-problem}
  \begin{align}
     -\Delta \hu  = 0 & \mbox{ in } \Omega \times (0,T) \\
    \nabla \hu \cdot \vec{\nu} + \partial_t \hv  = 0 & \mbox{ on } \Gamma \times (0,T) \\
    \hv \in \beta(\hu) & \mbox{ on } \Gamma \times (0,T) \\
    \hu  = u_D & \mbox{ on } \partial_0 \Omega \times (0,T) \\
    \hv(\cdot, 0)  = v^0(\cdot)\leq 0 & \mbox{ on } \Gamma,
  \end{align}
\end{subequations}
where $\beta \colon \mathbb{R} \to \{0,1\}^{\mathbb{R}}$ is the set valued function defined in (\ref{eqn:beta_def}).
\end{Pbm}

{
\begin{Defn}[Weak solution of  (\ref{eq:EdBC-limit-problem})]\label{def:ws-EdBC-limit-problem}

\noindent
  We say a function pair $(\hu,\hv)$ with $\hu \in L^2( 0, T; H^1_{e_{u_D}}(\Omega) )$ and $\hv \in \Lp{\infty}( 0, T; L^2( \Gamma ) )$ is a weak solution of \eqref{eq:EdBC-limit-problem}, if for all $\eta\in\Hil{1}\left({0,T};\Hil{1}_{e_0}(\O)\right)$ with $\eta(\cdot,T)=0$ on $\G$, we have
  \begin{equation}\label{eqn:EdBC-limit-weak-soln}
    \begin{aligned}
      &\int_0^T \left( \int_\O \nabla \hu \cdot \nabla \eta \dd x
      -\int_\Gamma \hv \pdt\eta \dd \sigma \right) \dd t-\int_\G v^0\eta(\cdot,0)\dd \sigma= 0,\\
      &\mbox{and } \quad \hv \in \beta( \hu ) \mbox{ a.e. in } \Gamma \times (0,T).
    \end{aligned}
  \end{equation}
  \end{Defn}
  }
  
  The strategy of passing to the limit follows that of \S \ref{sec:parab-limit-pb}.

\begin{The}[Convergence of the solution of (\ref{eq:eps-problem}) to a solution of (\ref{eq:EdBC-limit-problem})]
  \label{thm:EdBC-pbm}
  As $\delO=\delG=\delk\to0$ the  solution pair $(u,w)$ to {(\ref{eqn:wf_eps_problem})} converge  to a pair $(\hu,\hw)$ in the following topologies
\begin{align}
 u &\rightharpoonup \hu \quad \mbox{ in } L^2( 0, T; H^1_{e_{u_D}}( \Omega ) ) \label{eqn:weak_conv_EdBC_u}
\\
  w &\rightharpoonup \hw \quad \mbox{ in } L^2( 0, T; \Lp{2}( \Gamma ) ),\label{eqn:weak_conv_EdBC_w}
  \\
  w &\rightarrow \hw \quad \mbox{ in } L^2( 0, T; \Hil{-1/2}( \Gamma ) ),\label{eqn:strong_conv_EdBC_w}
  \end{align}
   Moreover, the pair $\hu,\hv$, with $\hv=-\hw$  are the unique solution to Problem (\ref{eq:EdBC-limit-problem}) in the sense of Definition (\ref{eqn:EdBC-limit-weak-soln}). 
    \end{The}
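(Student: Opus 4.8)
The plan is to mirror the proof of Theorem~\ref{thm:PdBC-pbm}, adjusting at the two places where also letting $\delO\to0$ changes the argument. Since the bounds in Lemma~\ref{Lem:unif_est} are uniform in $\delO,\delG,\delk$, I would first note that \eqref{eqn:eps_pb_estimate_1} together with the Dirichlet condition $u=u_D$ on $\partial_0\O$ and the Poincar\'e inequality bounds $\{u\}$ in $\Lp{2}(0,T;\Hil{1}_{e_{u_D}}(\O))$, and that \eqref{eqn:eps_pb_estimate_1} bounds $\{w\}$ in $\Lp{\infty}(0,T;\Lp{2}(\G))$. Passing to a subsequence gives $u\rightharpoonup\hu$ in $\Lp{2}(0,T;\Hil{1}_{e_{u_D}}(\O))$ and $w\rightharpoonup\hw$ in $\Lp{2}(0,T;\Lp{2}(\G))$, and, by continuity of the trace operator, $u|_\G\rightharpoonup\hu|_\G$ in $\Lp{2}(0,T;\Hil{1/2}(\G))$. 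For the strong convergence of $w$ I would apply Lemma~\ref{lem:als-compact}(2b) with $X=\Lp{2}(\G)$, $B=\dual{\Hil{1/2}(\G)}$: the compact embedding $X\hookrightarrow B$ is Lemma~\ref{lem:L2H-12-compact}, and the time-translate hypothesis follows from \eqref{eq:time_trans_estimate} (whose $w$-part carries no $\delO$ factor) together with $\norm{\pdtau w}_{\dual{\Hil{1/2}(\G)}}\le c\,\norm{\pdtau w}_{\Lp{2}(\G)}$. This gives $w\to\hw$ in $\Lp{2}(0,T;\dual{\Hil{1/2}(\G)})$. Weak limits preserve signs, so $\hu\ge0$, $\hv:=-\hw\le0$, and $\hv\in\Lp{\infty}(0,T;\Lp{2}(\G))$.

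Next I would pass to the limit in the weak formulation. For $\eta$ smooth with $\eta=0$ on $\partial_0\O\times[0,T]$ and $\eta(\cdot,T)=0$ on $\G$, subtracting the bulk and surface identities of \eqref{eqn:wf_eps_problem} --- each integrated in time, with integration by parts in $t$ and, on $\G$, in space in the $\delG$-term --- cancels the $\tfrac1{\delk}\int_\G uw\eta$ terms and leaves an identity with no $\delk$ whose only bulk contributions carry a $\delO$ prefactor. As $\delO=\delG=\delk\to0$ these $\delO$- and $\delG$-weighted terms vanish (using the uniform $\Lp{2}$-bounds on $u$, $w$ and the bound $\sqrt{\delO}\,\norm{u(\cdot,T)}_{\Lp{2}(\O)}\le C$, which also follows from \eqref{eqn:eps_pb_estimate_1}), while $\int_0^T\int_\O\nabla u\cdot\nabla\eta\to\int_0^T\int_\O\nabla\hu\cdot\nabla\eta$ and $\int_0^T\int_\G w\,\pdt\eta\to\int_0^T\int_\G\hw\,\pdt\eta$ by the weak convergences. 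With $\hv=-\hw$ and $v^0=-w^0$ this is the identity of Definition~\ref{def:ws-EdBC-limit-problem}, which I would then extend to all admissible $\eta\in\Hil{1}(0,T;\Hil{1}_{e_0}(\O))$ with $\eta(\cdot,T)=0$ on $\G$ by a density argument as in Theorem~\ref{thm:PdBC-pbm}.

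It remains to check $\hv\in\beta(\hu)$, which, given the signs, reduces to $\hu\hv=0$ a.e.\ on $\G\times(0,T)$, equivalently $\int_0^T\int_\G\hu\hw\,\dd\sigma\,\dd t=0$ since $\hu\hw\ge0$. Here the argument differs from Theorem~\ref{thm:PdBC-pbm}: there strong trace convergence of $u$ was available, whereas now I only have weak trace convergence of $u$ but strong convergence of $w$ in $\dual{\Hil{1/2}(\G)}$. I would write, for each $\delk$, $\hu\hw=(\hu-u)\hw+u(\hw-w)+uw$ on $\G$ and let $\delk\to0$: the first term vanishes since $u|_\G\rightharpoonup\hu|_\G$ in $\Lp{2}(0,T;\Hil{1/2}(\G))$ tested against the fixed $\hw\in\Lp{2}(0,T;\Lp{2}(\G))\subset\Lp{2}(0,T;\dual{\Hil{1/2}(\G)})$; the second since $\{u|_\G\}$ is bounded in $\Lp{2}(0,T;\Hil{1/2}(\G))$ and $w\to\hw$ strongly in $\Lp{2}(0,T;\dual{\Hil{1/2}(\G)})$, the pairings being the $\Lp{2}(\G)$-integrals by \eqref{eq:dual12-L2inner-prod}; and the third is $\le\delk\Lpn{1}{w_0}{\G}\to0$ by \eqref{eq:uw_estimate}. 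Hence $\hu\hw=0$ a.e.\ and $\hv\in\beta(\hu)$ a.e. Once uniqueness is known, every subsequence has a further subsequence with the same limit, so the whole family converges.

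For uniqueness, let $(\hu_1,\hv_1)$, $(\hu_2,\hv_2)$ be two weak solutions and set $\theta^\hu:=\hu_1-\hu_2\in\Lp{2}(0,T;\Hil{1}_{e_0}(\O))$, $\theta^\hv:=\hv_1-\hv_2$, so that $\int_0^T\int_\O\nabla\theta^\hu\cdot\nabla\eta\,\dd x\,\dd t-\int_0^T\int_\G\theta^\hv\,\pdt\eta\,\dd\sigma\,\dd t=0$ for all admissible $\eta$. Because $\delO=0$ there is no coercive $\delO\int_0^T\int_\O(\theta^\hu)^2$ term, so in place of the single integrated test function used in Theorem~\ref{thm:PdBC-pbm} I would use the family $\eta=\Psi_\tau$, $\Psi_\tau(\cdot,t):=\int_{\min(t,\tau)}^{\tau}\theta^\hu(\cdot,s)\,\dd s$, one for each $\tau\in(0,T)$; each $\Psi_\tau$ lies in $\Hil{1}(0,T;\Hil{1}_{e_0}(\O))$ and vanishes at $t=T$. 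Substituting and integrating by parts in $t$ on $[0,\tau]$ (using $\nabla\theta^\hu=-\pdt\nabla\Psi_\tau$ and $\Psi_\tau(\cdot,\tau)=0$) gives $\tfrac12\int_\O\lv\nabla\Psi_\tau(\cdot,0)\rv^2\,\dd x+\int_0^\tau\int_\G\theta^\hv\theta^\hu\,\dd\sigma\,\dd t=0$; the second term is non-negative by the monotonicity of $\beta$, so both terms vanish, and Poincar\'e on $\Hil{1}_{e_0}(\O)$ forces $\Psi_\tau(\cdot,0)=\int_0^\tau\theta^\hu(\cdot,s)\,\dd s=0$ for all $\tau$, whence $\theta^\hu=0$. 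The difference identity then becomes $\int_0^T\int_\G\theta^\hv\,\pdt\eta\,\dd\sigma\,\dd t=0$ for all admissible $\eta$; choosing $\eta(\cdot,t)=\int_t^T\mathbb{D}\phi(\cdot,s)\,\dd s$ with $\mathbb{D}\phi$ the harmonic extension of $\phi\in\Lp{2}(0,T;\Hil{1/2}(\G))$ vanishing on $\partial_0\O$ gives $\int_0^T\int_\G\theta^\hv\phi\,\dd\sigma\,\dd t=0$ for all such $\phi$, so $\theta^\hv=0$ in $\Lp{2}(0,T;\dual{\Hil{1/2}(\G)})$ and hence $\hv_1=\hv_2$. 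The main obstacle is exactly this last step: with $\delO=0$ the bulk controls only a time primitive of $\theta^\hu$, so one must run the energy estimate against the whole family $\{\Psi_\tau\}_{\tau\in(0,T)}$ to recover $\theta^\hu=0$ pointwise in time before the Dirichlet-to-Neumann test functions can close the proof.
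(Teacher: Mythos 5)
Your proposal is correct, and for the compactness, the passage to the limit in the weak form, and the verification of the complementarity condition it follows essentially the same route as the paper: uniform bounds from Lemma~\ref{Lem:unif_est}, Lemma~\ref{lem:als-compact} with the time-translate estimate \eqref{eq:time_trans_estimate} and the compact embedding of Lemma~\ref{lem:L2H-12-compact} for \eqref{eqn:strong_conv_EdBC_w}, the density argument of Theorem~\ref{thm:PdBC-pbm} for the limit identity, and exactly the paper's pairing of the weakly convergent traces in $\Lp{2}(0,T;\Hil{1/2}(\G))$ against the strongly convergent $w$ in $\Lp{2}(0,T;\dual{\Hil{1/2}(\G)})$ together with \eqref{eq:uw_estimate} to get $\int_0^T\int_\G \hu\hv\,\dd\sigma\,\dd t=0$; your three-term splitting is just the explicit form of what the paper states tersely. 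The one genuine difference is the uniqueness step. The paper tests with the single primitive $\theta^\hz(\cdot,t)=\int_t^T\theta^\hu(\cdot,s)\,\dd s$, and its displayed identity contains a slip: the bulk term produced by this test function is $\tfrac12\int_\O\lv\nabla\theta^\hz(\cdot,0)\rv^2\dd x$, not $\tfrac12\int_0^T\int_\O\lv\nabla\theta^\hu\rv^2\dd x\,\dd t$, so with $\delO=0$ this single choice only yields $\int_0^T\theta^\hu(\cdot,s)\,\dd s=0$, which by itself does not force $\theta^\hu=0$ (in the parabolic case of Theorem~\ref{thm:PdBC-pbm} the coercive $\delO$-term rescues the argument, which is why the same device suffices there). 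Your family of truncated primitives $\Psi_\tau(\cdot,t)=\int_{\min(t,\tau)}^{\tau}\theta^\hu(\cdot,s)\,\dd s$, $\tau\in(0,T)$, is the standard and correct repair: each $\Psi_\tau$ is admissible, the monotonicity of $\beta$ kills the surface term, and Poincar\'e gives $\int_0^\tau\theta^\hu=0$ for every $\tau$, hence $\theta^\hu=0$ a.e.\ in time, after which your Dirichlet-to-Neumann test functions coincide with the paper's and give $\theta^\hv=0$ in $\Lp{2}\left(0,T;\dual{\Hil{1/2}(\G)}\right)$. So your argument buys a fully justified uniqueness proof in the $\delO=0$ case, at the cost only of running the energy identity over the one-parameter family; the remainder, including the observation that uniqueness upgrades subsequential convergence to convergence of the whole family, is as in the paper.
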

  \begin{proof}
  As in the proof of Theorems \ref{thm:parab-pbm} and \ref{thm:PdBC-pbm}, the  estimates of Lemma \ref{Lem:unif_est}, specifically \eqref{eqn:eps_pb_estimate_1}   together with  the compactness results recalled in \eqref{eq:weak-conv} imply the convergence results (\ref{eqn:weak_conv_EdBC_u}) and (\ref{eqn:weak_conv_EdBC_w}).  The strong convergence result (\ref{eqn:strong_conv_EdBC_w}) follows due to the Lions-Aubin-Simon compactness theory (Lemma~\ref{lem:als-compact}) together with the estimate on the time translates of $w$ \eqref{eq:time_trans_estimate} and the compact embedding of $\Lp{2}(\G)$ into $\Hil{-1/2}(\G)$ shown in Lemma~\ref{lem:L2H-12-compact}.

  The fact that the limits $\hu,\hv=-\hw$ satisfy
  \begin{align*}
     \int_0^T \left( \int_\O\nabla\hu\cdot\nabla\eta\dd x-\int_\G\hv\pdt\eta \dd \sigma \right) \dd t-\int_\G v^0\eta(\cdot,0)\dd \sigma=0,
  \end{align*}
for all $\eta$ as in Definition \ref{def:ws-EdBC-limit-problem},  follows from the weak convergence results (\ref{eqn:weak_conv_EdBC_u}) and (\ref{eqn:weak_conv_EdBC_w}) together with an analogous density argument   to that used in the proof of Theorem \ref{thm:PdBC-pbm}.   It remains to check $\hv\in\beta(\hu)$. As previously we have $\hu\geq0$ and $\hv\leq0$. The fact that $\hu,\hv\in\Lp{2}({\G \times (0,T)})$, the strong convergence result (\ref{eqn:strong_conv_EdBC_w}), the weak convergence result (\ref{eqn:weak_conv_EdBC_u}) which implies weak convergence of the trace of $u$ in $L^2( 0, T; H^{1/2}( \G ) )$ and the estimate (\ref{eq:uw_estimate}) imply
\[
\int_0^T \int_\Gamma \hu \hv \dd \sigma \dd t
=\int_0^T\dualp{}{\hv}{\hu }{{H^{1/2}(\Gamma)}}\dd t
=0,
\]
and hence $\hv\in\beta(\hu)$.
      
  Similarly the uniqueness argument mirrors that used in the proof of Theorem  \ref{thm:PdBC-pbm}. Letting $(\hu_1,\hv_1)$ and $(\hu_2,\hv_2)$ be two solutions of (\ref{eq:EdBC-limit-problem}) in the sense of Definition \ref{def:ws-EdBC-limit-problem} and setting $\theta^\hu(\cdot,t):=(\hu_1(\cdot,t)-\hu_2(\cdot,t)), \theta^\hv(\cdot,t):=(\hv_1(\cdot,t)-\hv_2(\cdot,t))$. 
  The pair $(\theta^\hu,\theta^\hv)$ satisfy
    \begin{align}\label{eqn:hthetauhthetav}
\int_0^T \int_\O\nabla \theta^\hu\cdot\nabla \eta\dd x\dd t-\int_0^T\int_\G\theta^\hv\pdt\eta\dd \sigma\dd t=0,
\end{align}
for all {$\eta\in\Hil{1}\left(0,T;\Hil{1}_{e_0}(\O)\right)$ with $\eta(\cdot,T)=0$ on $\G$}. { For $t\in{(0,T)}$ we define $\theta^\hz(\cdot,t)=\int_t^T\theta^\hu(\cdot,s)\dd s$. Noting $\theta^\hz$ is an admissible test function, we set $\eta=\theta^\hz$ in (\ref{eqn:hthetauhthetav}) which gives, using the fact that $\theta^\tz(\cdot,T)=0$ , 
 \begin{equation*}
\frac{1}{2}\int_0^T\int_\O\lv\nabla\theta^\hu\rv^2\dd x\dd t+\int_0^T\int_\G\left(\hv_1-\hv_2\right)\left(\hu_1-\hu_2\right)\dd \sigma\dd t=0.
\end{equation*}
Recalling that $\hv_i\in\beta(\hu_i),i=1,2$, the monotonicity of $\beta$, together with the Poincare inequality as $\theta^\hz\in\Hil{1}_{e_0}(\O)$ gives}
 \begin{equation*}
 \Lpn{2}{\theta^\tu}{(0,T);\Hil{1}(\O)}^2=0.
\end{equation*}
Finally, via the same argument used in the proof of Theorem  \ref{thm:PdBC-pbm}, (\ref{eqn:hthetauhthetav}) and the above bound yield
\begin{equation*}
\Lpn{2}{\theta^\hv}{(0,T);\Hil{-1/2}(\G)}=0,
\end{equation*}
which completes the Proof of the Theorem.
  \end{proof}

\section{Degenerate parabolic equations}

{
In this Section we give alternative formulations of the limiting problems of \S \ref{sec:limit_parabolic}-\ref{sec:elliptic-limit-pb}. 
Solutions to the problems \ref{parpar}, \ref{pardyn} and \ref{elldyn}  introduced in this section are solutions of problems \ref{pbm:parabolic-problem}, \ref{pbm:PdBC-limit-problem} and \ref{pbm:EdBC-limit-problem} respectively.

The structure of the equations is revealed by writing them as abstract degenerate parabolic equations holding on the surface $\Gamma$. Doing this, one observes that the problems are the analogues of the Hele-Shaw 
and steady one phase Stefan problems   with the half-Laplacian  replacing the usual Laplacian $(-\lap)$ (see \citep{crowley1979weak,EllOck82} for further details on the formulation of the Hele-Shaw 
and one phase Stefan problems).
}

First, we define a parabolic extension operator $$P^{\delta_\Omega} \colon L^2( 0, T; H^{1/2}(\Gamma) ) \to L^2( 0, T; H^1_{e_{u_D}}( \Omega ) ),$$
 or $$P^{\delta_\Omega} \colon L^2( 0, T; H^{1/2}(\Gamma) ) \to L^2( 0, T; H^1( \Omega ) )$$ in the Neumann case.
We fix $\eta \in L^2( 0, T ; H^{1/2}( \Gamma ) )$ we define $P^{\delta_\Omega} \eta$ to be the unique solution of
\begin{equation}
  \label{eq:Peps-defn}
  \begin{aligned}
    \delta_\Omega \partial_t ( P^{\delta_\Omega} \eta ) - \Delta ( P^{\delta_\Omega} \eta )
    & = 0 && \mbox{ in } \Omega \times ( 0, T ) \\
    P^{\delta_\Omega} \eta
    & = \eta && \mbox{ on } \Gamma \times ( 0, T ) \\
    P^{\delta_\Omega} \eta
    = 0 \mbox{ or }
    \nabla ( P^{\delta_\Omega} \eta ) \cdot \vec{\nu}_\Omega & = 0 && \mbox{ on } \partial_0 \Omega \times ( 0, T ) \\
    ( P^{\delta_\Omega} \eta )( \cdot, 0 )
    & = 0 && \mbox{ in } \Omega.
  \end{aligned}
\end{equation}

This allows us to define a parabolic Dirichlet to Neumann (DtN) map $\mathcal A^{\delta_\Omega} \colon L^2( 0, T; H^{1/2}( \Gamma ) ) \to L^2\left(  ( 0, T ) ; \dual{H^{1/2}( \Gamma )} \right)$ by
\begin{equation}
  \label{eq:Aeps-defn}
  \mathcal A^{\delta_\Omega} \eta := \nabla ( P^{\delta_\Omega} \eta ) \cdot \vec\nu
  \qquad \mbox{ for } \eta \in L^2( 0, T; H^{1/2}( \Gamma ) ).
\end{equation}

Next, we define a new elliptic extension  operator $P^0 \colon L^2( 0, T; H^{1/2}(\Gamma) ) \to L^2( 0, T; H^1_{e_{u_D}}( \Omega ) )$, which formally is a limit of  {$P^{\delO}$} from \eqref{eq:Peps-defn}.
For $\eta \in L^2( 0, T ; H^{1/2}( \Gamma ) )$ we define $P^{0} \eta$ to be the unique solution of
\begin{equation}
  \label{eq:P0-defn}
  \begin{aligned}
    - \Delta ( P^{0} \eta )
    & = 0 && \mbox{ in } \Omega \times ( 0, T ) \\
    P^{0} \eta
    & = \eta && \mbox{ on } \Gamma \times ( 0, T ) \\
    P^{0} \eta
     & = 0 && \mbox{ on } \partial_0 \Omega \times ( 0, T ).
  \end{aligned}
\end{equation}

This allows us to define the elliptic DtN  map $\mathcal A^{0} \colon L^2( 0, T; H^{1/2}( \Gamma ) ) \to L^2\left(  ( 0, T ) ; \dual{H^{1/2}( \Gamma )} \right)$ by
\begin{equation}
  \label{eq:A0-defn}
  \mathcal A^{0} \eta := \nabla ( P^{0} \eta ) \cdot \vec\nu
  \qquad \mbox{ for } \eta \in L^2( 0, T; H^{1/2}( \Gamma ) ).
\end{equation}
We note that the operator $\mathcal{A}^0$ may also be viewed as  the half-Laplacian $(-\lap_\G)^{1/2}$ for functions on $\G$ \citep{caffarelli2007extension}.

It is also convenient to introduce extensions of the data. First we introduce $U_D^{\delta_\Omega}$ as the solution of the parabolic problem
\begin{equation}
  \label{eq:DirichData-defn}
  \begin{aligned}
    \delta_\Omega \partial_t  U_D^{\delta_\Omega}  - \Delta  U_D^{\delta_\Omega}    & = 0 && \mbox{ in } \Omega \times ( 0, T ) \\
   U_D^{\delta_\Omega}
    & = 0 && \mbox{ on } \Gamma \times ( 0, T ) \\
   U_D^{\delta_\Omega}
    = u_D \mbox{ or }
    \nabla (U_D^{\delta_\Omega}) \cdot \vec{\nu}_\Omega & = 0 && \mbox{ on } \partial_0 \Omega \times ( 0, T ) \\
   U_D^{\delta_\Omega}( \cdot, 0 )
    & = 0 && \mbox{ in } \Omega.
  \end{aligned}
\end{equation}
Second we have $U_D$ as the solution of an elliptic problem%
\begin{equation}
  \label{eq:DirichData0-defn}
  \begin{aligned}
    - \Delta  U_D    & = 0 && \mbox{ in } \Omega \\
   U_D
    & = 0 && \mbox{ on } \Gamma \\
   U_D
    & = u_D && ~~\mbox{ or }
    \nabla U_D \cdot \vec{\nu}_\Omega & = 0 && \mbox{ on } \partial_0 \Omega.   \end{aligned}
\end{equation}
In the Neumann case  we have $U_D^{\delta_\Omega}=U_D=0$.

Third, we introduce $U_I^{\delta_\Omega}$ as the solution of the parabolic problem
\begin{equation}
  \label{eq:InitData-defn}
  \begin{aligned}
    \delta_\Omega \partial_t  U_I^{\delta_\Omega}  - \Delta  U_I^{\delta_\Omega}    & = 0 && \mbox{ in } \Omega \times ( 0, T ) \\
   U_I^{\delta_\Omega}
    & = 0 && \mbox{ on } \Gamma \times ( 0, T ) \\
   U_I^{\delta_\Omega}
    =0  \mbox{ or }
    \nabla (  U_I^{\delta_\Omega}) \cdot \vec{\nu}_\Omega & = 0 && \mbox{ on } \partial_0 \Omega \times ( 0, T ) \\
   U_I^{\delta_\Omega}( \cdot, 0 )
    & = u_0 && \mbox{ in } \Omega.
  \end{aligned}
\end{equation}
Note that as $\delta_\Omega \rightarrow 0$ that
$U^{\delta_\Omega}_I \rightarrow 0$ and $U_D^{\delta_\Omega}\rightarrow U_D$ in $L^2(0,T;H^1(\Omega))$.
 Finally, we write
  $L$ for $-\Delta_\Gamma$ as an operator $L^2( 0, T; H^1( \Gamma ) ) \to L^2\left( 0, T; \dual{H^{1}( \Gamma )}\right)$
%
%
\begin{Pbm}[Fast reaction limit, $\delk=0$]\label{parpar}
  Find $\bar{u}\geq0$ and $\bar{v}\leq0$ with $\bar{u} \in L^2( 0, T; H^{1/2}( \Gamma ) )$ and $\bar{v} \in L^2( 0, T; H^1( \Gamma ) )$ with $\partial_t \bar{v} \in L^2\left( 0, T; \dual{H^{1}( \Gamma )}\right)$ such that
  \begin{equation}
    \begin{aligned}
      \partial_t \bar{v} + \delta_\Gamma L \bar{v} + \mathcal A^{\delta_\Omega} \bar{u} + \nabla (U_D^{\delta_\Omega}+U_I^{\delta_\Omega}) \cdot \nu
      & = 0 && \mbox{ in } L^2\left( 0, T; \dual{H^{1}( \Gamma )}\right) \\
      \bar{v} \in \beta( \bar{u} ) & && \mbox{ on } \Gamma \times ( 0, T ) \\
      \bar{v}( \cdot, 0 ) & = v^0 && \mbox{ in } \Omega.
    \end{aligned}
  \end{equation}
\end{Pbm}

\begin{Pbm}[Bulk parabolic limit equation with dynamic boundary condition, $\delk=\delG=0$]\label{pardyn}

  Find $\tu\geq0$ and $\tv\leq0$ with $\tu \in L^2( 0, T; H^{1/2}( \Gamma ) )$ and $\tv \in L^2( 0, T; L^2( \Gamma ) )$ with $\partial_t \tv \in L^2\left( 0, T; \dual{H^{1}( \Gamma )}\right)$ such that
  \begin{equation}
    \begin{aligned}
      \partial_t \tv + \mathcal A^{\delta_\Omega} \tu + \nabla (U_D^{\delta_\Omega}+U_I^{\delta_\Omega}) \cdot \nu
      & = 0 && \mbox{ in } L^2\left( 0, T; \dual{H^{1}( \Gamma )}\right) \\
      \tv \in \beta( \tu ) & && \mbox{ on } \Gamma \times ( 0, T ) \\
      \tv( \cdot, 0 ) & = v^0 && \mbox{ in } \Omega.
    \end{aligned}
  \end{equation}
\end{Pbm}

\begin{Pbm}[Elliptic equation with dynamic boundary condition, $\delk=\delG=\delO=0$] \label{elldyn}
  Find $\hu\geq0$ and $\hv\leq0$ with $\hu \in L^2( 0, T; H^{1/2}( \Gamma ) )$ and $\hv \in L^2( 0, T; L^2( \Gamma ) )$ with $\partial_t \hv \in L^2\left( 0, T; \dual{H^{1}( \Gamma )}\right)$ such that
  \begin{equation}
    \begin{aligned}
      \partial_t \hv + \mathcal A^{0} \hu + \nabla U_D \cdot \nu
      & = 0 && \mbox{ in } L^2\left( 0, T; \dual{H^{1}( \Gamma )}\right) \\
      \hv \in \beta( \hu ) & && \mbox{ on } \Gamma \times ( 0, T ) \\
      \hv( \cdot, 0 ) & = v^0 && \mbox{ in } \Omega.
    \end{aligned}
  \end{equation}
\end{Pbm}

\section{Variational inequality formulation}
\label{sec:var-ineq}

{
Similarly to the Hele-Shaw and one phase Stefan  problems, that may  be reformulated as  variational inequalities via an integration in time
  \citep{duvaut1973resolution,ell80,elliott1981variational,rodrigues1987variational},  via integrating in time, the systems (\ref{eq:PdBC-limit-problem}) and (\ref{eq:EdBC-limit-problem})  and  Problems  \ref{pardyn} and   \ref{elldyn} may be reformulated, respectively,  as   parabolic and elliptic variational inequalities of obstacle type. The obstacle problem  lies on the surface $\Gamma$ and is a consequence of the complementarity  which is maintained after an  integration with respect to time and noting that 
this integration commutes with the operators $\mathcal A^{\delta_\Omega}$ and  $\mathcal A^{0}$.}

We set
\begin{equation}\label{eqn:zdef}
z(\cdot,t)=\int_0^t\hu(\cdot,s)\dd s,
\end{equation}
where $\hu$ satisfies (\ref{eq:EdBC-limit-problem}). We find it convenient to introduce $Z_D^{\delta_{\Omega}}$ as
\begin{equation}
Z_D^{\delO}(\cdot,t)=tU_D.
\end{equation}

Proceeding formally, we claim that if the pair $(\hu,\hv)$ satisfy   \ref{eq:PdBC-limit-problem} (or   \eqref{eq:EdBC-limit-problem} with 
$ \delta_{\Omega} =0$) then the pair $(z,\hv)$  satisfy the following problem
\begin{Pbm}
  For each $t \in (0,T)$, find $z(t) \in H^1(\Omega)$ and $\hv(t) \in L^2(\Gamma)$ such that
  \begin{subequations}\label{eq:z-problem-strong}
    \begin{align}
    \delta_{\Omega} \pdt z  - \delta_{\Omega} u_0 - \Delta z & = 0 && \mbox{ in } \Omega \\
{      \nabla z \cdot \vec{\nu} +  \hv - v^0 }&{= 0 }&& \mbox{ on } \Gamma \\
      \hv & \in \beta(z) && \mbox{ on } \Gamma \\
      z & = Z_D && \mbox{ on } \partial_0 \Omega.
    \end{align}
  \end{subequations}
\end{Pbm}

We check the condition $\hv \in \beta(z)  \mbox{ on } \Gamma \times (0,T)$. The remaining conditions follow formally from integration in time of \eqref{eq:EdBC-limit-problem}.
Let $\chi_B$ denote the characteristic function of the set $B$, then we have
\[
\int_\G \hv\chi_{z>0} \dd \sigma = \int_\G \hv(\chi_{z>0}-\chi_{\hu>0}) \dd \sigma + \int_\G \hv\chi_{\hu>0} \dd \sigma.
\]
Noting that $\chi_{z>0}\geq \chi_{\hu>0}$ as $\hu\geq 0$ and recalling $\hv\leq 0$ we have
\[
\int_\G \hv\chi_{z>0} \dd \sigma \geq\int_\G \hv\chi_{\hu>0} \dd \sigma =0.
\]
as $\hv\in\beta(\hu)$. Finally as $\hv\leq0$ and $\hz\geq0$ this yields $\hv\in\beta(z)$.

We now show that \eqref{eq:z-problem-strong}, in the case  $\delta_{\Omega} =0$, may be formulated as an 
elliptic variational inequality. For all $\eta\in\Hil{1}_{e_0}(\O)$
\begin{equation}
  0 = \int_\O-\lap z\eta \dd x
  = \int_\O\nabla z\cdot\nabla \eta \dd x -\int_\G\nabla z\cdot\normal\eta \dd \sigma.
\end{equation}
Thus defining the convex set
\[
K_t:=\{\eta \in \Hil{1}_{e_{Z_D(\cdot,t)}}(\O)|~\eta\geq 0~\mbox{on}~~\Gamma\}.
\] 
We see that for any $\eta\in K_t$ we have
\begin{equation}
\begin{aligned}
\int_\O\nabla z\cdot\nabla( \eta-z) \dd x &=\int_\G\nabla z\cdot\normal(\eta-z) \dd \sigma \\
&=\int_\G(v^0-v)(\eta-z) \dd \sigma.
  \end{aligned}
\end{equation}
Now since $z\geq0,v\leq0$ and $zv=0$ we arrive at the following elliptic variational inequality where time enters as a parameter, find $z\in K_t$ such that
\begin{equation}\label{eqn:EVI}
\begin{aligned}
\int_\O\nabla z\cdot\nabla( \eta-z) \dd x \geq\int_\G v^0(\eta-z) \dd \sigma \myall\eta \in K_t.
  \end{aligned}
\end{equation}

The same argument outlined above yields that if $z$ is defined by (\ref{eqn:zdef}) with $\hu$ replaced by $\tu$, the unique solution to the parabolic problem (\ref{eq:PdBC-limit-problem}) then
$z$ satisfies the parabolic variational inequality,  find $z\in K_t$ such that
\begin{equation}\label{eqn:PVI}
\begin{aligned}
\int_\O\delO\pdt z\eta+\nabla z\cdot\nabla( \eta-z) \dd x \geq\int_\O\delO u^0(\eta-z) \dd x +\int_\G v^0(\eta-z) \dd \sigma \myall\eta \in K_t.
  \end{aligned}
\end{equation}

 We may also integrate the appropriate degenerate parabolic problems in time yielding for example in the case $\delO=0$
\begin{equation}\label{eqn:ell_VI_DtN}
    \begin{aligned}
      \mathcal A^{0} z + \nabla Z_D \cdot \nu -v^0&=-\hat v && \mbox{ on } \Gamma\\
\hat v \le 0,  ~~    z &\geq 0, ~~z\hat v=0 && \mbox{ on } \Gamma
    \end{aligned}
\end{equation}
and obtain the elliptic variational inequality from this calculation.


\section{Numerical experiments}
We now present some numerical simulations that support the theoretical results of the previous sections and illustrate a robust numerical method for the simulation of coupled bulk-surface systems of equations. We employ a piecewise linear coupled bulk surface finite element method for the approximation. The method is based on the coupled bulk-surface finite element method  proposed and analysed (for linear elliptic systems) by \citet{elliott2013finite}.
\subsection{Coupled bulk-surface finite element method}
We define computational domains $\O_h$ and $\G_h$ by requiring that $\O_h$ is a polyhedral approximation to $\O$ and we set $\G_h=\partial\O_h \setminus \partial_0\O_h$, i.e., $\G_h$ is the interior boundary of the polyhedral domain $\O_h$. We assume that $\O_h$ is the union of $n+1$ dimensional simplices (triangles for $n=1$ and tetrahedra for $n=2$) and hence the faces of $\G_h$ are $n$ dimensional simplices. 

 We define $\T_h$ to be a triangulation of $\O_h$ consisting of  closed simplices. Furthermore, we assume the triangulation is such that for every $k\in\T_h$, $k\cap\G_h$ consists of at most one face of $k$. 
 We define the bulk and surface finite element spaces $\Vc{\gamma}, \gamma\in\Reals$ and $\Sc$ respectively by
 \[
 \Vc{\gamma}=\left\{\Phi\in C(\O_h):\Phi=\gamma\text{ on }\partial_0\O_h\text{ and }\Phi\vert_k\in\mathbb{P}^1(k),\myall k\in\T_h\right\},
 \]
and
\[
 \Sc=\left\{\Psi\in C(\Gc):\Psi\vert_s\in\mathbb{P}^1(s),\myall k\in\T_h\text{ with }s=k\cap\Gc\neq\emptyset\right\}.
\]
\subsection{Numerical schemes}
In the interests of brevity we only present numerical schemes for the  approximation of (\ref{eqn:wf_eps_problem}) and (\ref{eqn:EVI}), i.e., the original problem with Dirichlet boundary conditions and the elliptic variational inequality respectively. For simplicity we take $u_D=1$. The modifications for the Neumann case and the parabolic variational inequality are standard. We divide the time interval $[0,1]$ into {$M$} sub-intervals $0=t_0<t_1<\dots<t_{M-1}<t_M=1$ and denote by $\tau:=t_m-t_{m-1}$ the time step, which for simplicity is taken to be uniform. For a time discrete sequence, we introduce the shorthand $f^m:=f(t_m)$.

 For the time discretisation of (\ref{eqn:wf_eps_problem}) we employ an IMEX method where the diffusion terms are treated implicitly whilst the reaction terms are treated explicitly \citep{lakkis2013implicit} which leads to two decoupled parabolic systems. 
 The fully discrete scheme for the approximation of (\ref{eqn:wf_eps_problem}) reads as follows, for $m=1,\dots,M$ find $(U^m,W^m)\in(\Vc{u_D}\times\Sc)$ such that for all $(\Phi,\Psi)\in(\Vc{0}\times\Sc)$
\begin{equation}\label{eqn:eps-fd-scheme}
\begin{split}
\int_{\O_h}\delO\frac{1}{\tau}\left(U^{m}-U^{m-1}\right)\Phi\dd x+\int_{\O_h}\nabla U^{m+1}\cdot\nabla\Phi\dd x&=-\frac{1}{\delk}\int_{\G_h}\Lagrange\left[U^{m-1}W^{m-1}\right]\Phi\dd \sigma_h\\
\int_{\G_h}\frac{1}{\tau}\left(W^{m}-W^{m-1}\right)\Psi\dd \sigma_h+\int_{\G_h}\delG\nabla_{\G_h} W^{m+1}\cdot\nabla_{\G_h}\Psi\dd \sigma_h&=-\frac{1}{\delk}\int_{\G_h}\Lagrange\left[U^{m-1}W^{m-1}\right]\Psi\dd \sigma_h\\
U^0=\Clement u^0\quad\text{and}\quad W^0&=\Lagrange w^0,
\end{split}
\end{equation}
where $\Clement:C(\O_h)\to\Vc{u_D}$ and $\Lagrange:C(\G_h)\to\Sc$ denote the Lagrange interpolants into the bulk and surface finite element spaces respectively.

 For the approximation of (\ref{eqn:EVI}), we note that at each time step a single elliptic variational inequality must be solved, {the solution of which} may be obtained independently of the values at other times.  Introducing the bulk finite element space
 \[
 \Kc{t}=\left\{\Phi\in C(\O_h):\Phi\geq0, \Phi=t\text{ on }\partial_0\O_h\text{ and }\Phi\vert_k\in\mathbb{P}^1(k),\myall k\in\T_h\right\},
 \]
the fully discrete scheme for the approximation of (\ref{eqn:EVI}) reads, for $m=1,\dots,N,$ find $Z^m\in\Kc{t}$ such that for all $\Phi\in \Kc{t}$
\begin{equation}\label{eqn:z-fd-scheme}
\int_{\O_h}\nabla Z^m\cdot\nabla(\Phi- Z^m)\dd x\geq\int_{\G_h}v^0(\Phi-Z^m)\dd \sigma_h.
\end{equation}
For a discussion of the analysis of discretisation of this problem we refer to \citet{nochetto2014convergence}.

\subsection{2D simulations}\label{sec:sim-2d}
For all the simulations we use of the finite element toolbox \ALBERTA \citep{schmidt2005design}. For the visualisation  we use \PARAVIEW \citep{henderson2004paraview}.  We start with the case where $\O$ is two dimensional, i.e., the surface $\G$ is a curve.  We set $\partial_0\O$ to be the boundary of the square of length four centred at the origin and  define the surface of the cell $\G$ by the level set function $\G=\{\vec x\in\Reals^2\vert (x_1+0.2-x_2^2)^2+x_2^2-1=0\}$.  We generated a bulk triangulation of the domain $\O_h$ and the corresponding induced surface triangulation of $\G_h$ using \Program{DistMesh}~\citep{persson2004simple}. We used a graded mesh-size with small elements near $\G$, the bulk mesh had 2973 DOFs (degrees of freedom) and the induced surface triangulation had 341 DOFs.   Figure \ref{fig:2d_triang_example} shows the mesh used for all the 2D simulations.

\begin{figure}
  \centering
  \includegraphics[trim = 10mm 90mm 10mm 10mm,  clip, width=0.5\textwidth]{./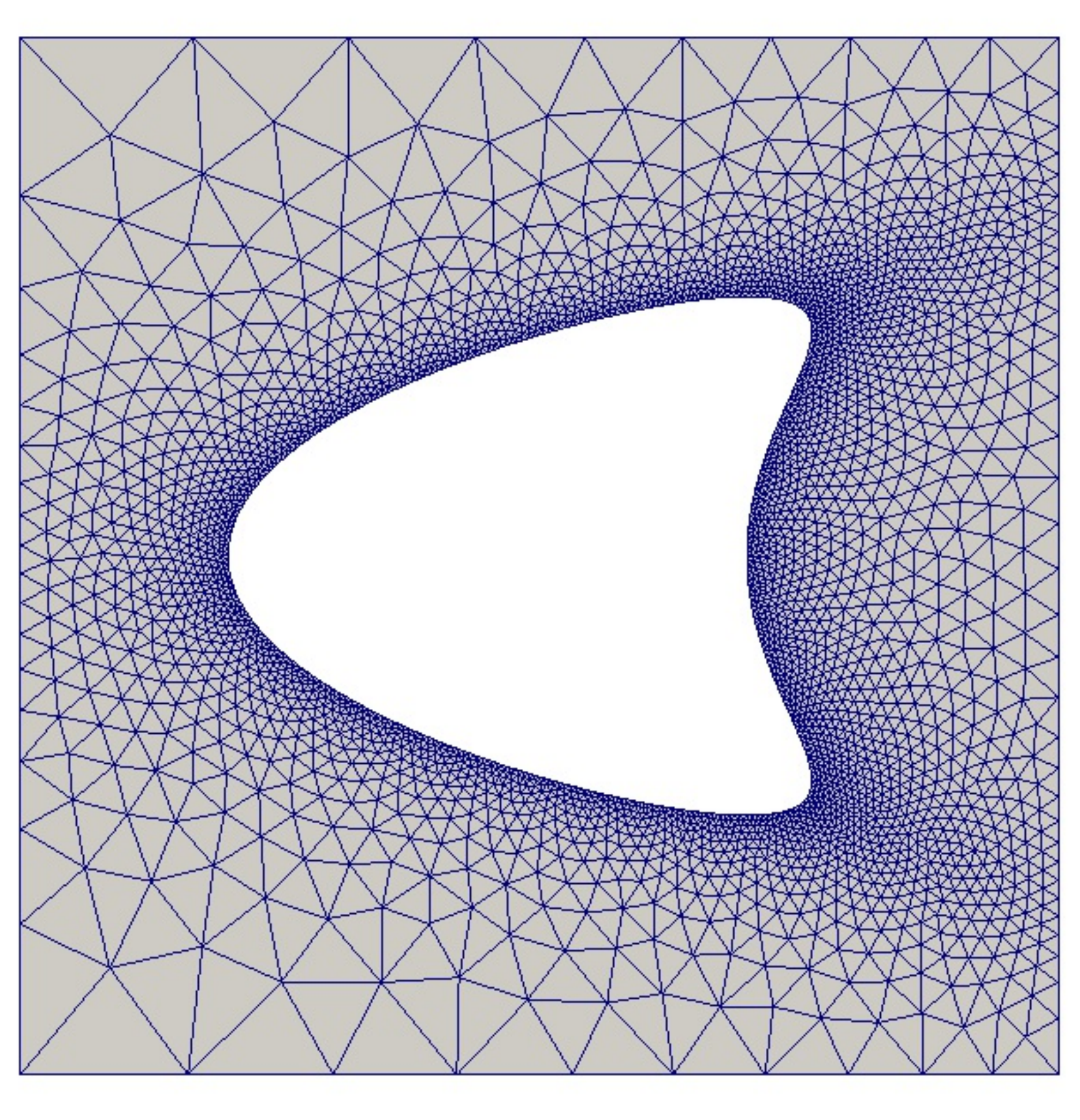}
  \caption{The computational domain for the simulations in $2d$ of \S  \ref{sec:sim-2d}, generated using \Program{DistMesh} \citep{persson2004simple}.}
  \label{fig:2d_triang_example}
\end{figure}

In light of the theoretical results of the previous sections, we consider (\ref{eqn:wf_eps_problem}) with $\eps=\delk=\delO=\delG=10^{-1},10^{-2}$ and $10^{-3}$ respectively and compare the simulation results with the results of simulations of (\ref{eqn:EVI}). For the problem data for (\ref{eqn:wf_eps_problem}), we took the end time $T=0.7$ and $u_D=1$.   For the initial data for (\ref{eqn:wf_eps_problem}) we took $w^0=\max(0,\cos(\pi x_2)+\sin(\pi x_1))$, $\vec x\in\G$ and $u^0=u_D=1$  and for (\ref{eqn:EVI}) we took $v^0=-w^0$. For each of the simulations of (\ref{eqn:wf_eps_problem}) we used same uniform time step, $\tau=10^{-8}$. In order to compare the solutions of (\ref{eqn:wf_eps_problem}) with those of (\ref{eqn:EVI}), we solve (\ref{eqn:EVI}) at a series of distinct times and post-process the solution to obtain $u=\pdt z$ and $w=\nabla z\cdot \normal +w^0$.

Snapshots of the solution $Z$ to (\ref{eqn:EVI}) at a series of distinct times is shown in Figure \ref{fig:2d-res-z}. We note that to post-process $U^{t_m}:=(Z^{t_m}-Z^{t_m-\tau})/\tau$ we solve (\ref{eqn:EVI}) at  $t_m$ and $t_m-\tau$ fixing $\tau=10^{-2}$. We stress that as time simply enters as a parameter in (\ref{eqn:EVI}) its solution may be approximated independently at any given time, it is simply for the recovery of $U$ for which we require values of $Z$ at a previous time.

Figure \ref{fig:2d-res-uv} shows snapshots of the simulated $U$ and $W$. Initially we observe depletion of the bulk ligand concentration $U$ in each case near regions where the initial data for the surface receptors $w^0$ is large. As time progresses we observe a decay in $W$ with larger decreases in $W$ observed for smaller values of $\eps$. Similarly the speed at which the system approaches the steady state corresponding to constant solutions  $u=1$ and $w=0$ appears to be an increasing function of $\eps$. The post-processed $U$ and $W$ obtained from the solution to (\ref{eqn:EVI}) show qualitatively similar behaviour  with faster dynamics towards the steady state which is attained by the end time $t=0.7$, with none of the simulations with $\eps>0$ attaining this steady state by $t=0.7$. In order to illustrate more clearly the formation of the free boundary as $\eps\to0$, in Figure \ref{fig:2d-plot-uv} we show plots of $W$ and the trace of $U$ over the surface $\G_h$. We observe that $\eps\to 0$ the supports of the trace of $U$ and $W$ become disjoint and their profiles approach that obtained on post-processing the solution of (\ref{eqn:EVI}).

\begin{figure}[htbp!]
\begin{minipage}{0.075\linewidth}
\includegraphics[ trim = 0mm 0mm 0mm 0mm,  clip,  width=\textwidth
]{./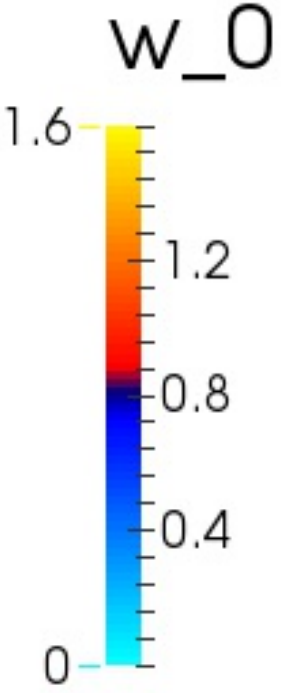}
\end{minipage}
\hskip .2em
\centering
\begin{minipage}{0.9\linewidth}
\centering
\includegraphics[ trim = 10mm 110mm 10mm 30mm,  clip,  width=.24\textwidth
]{./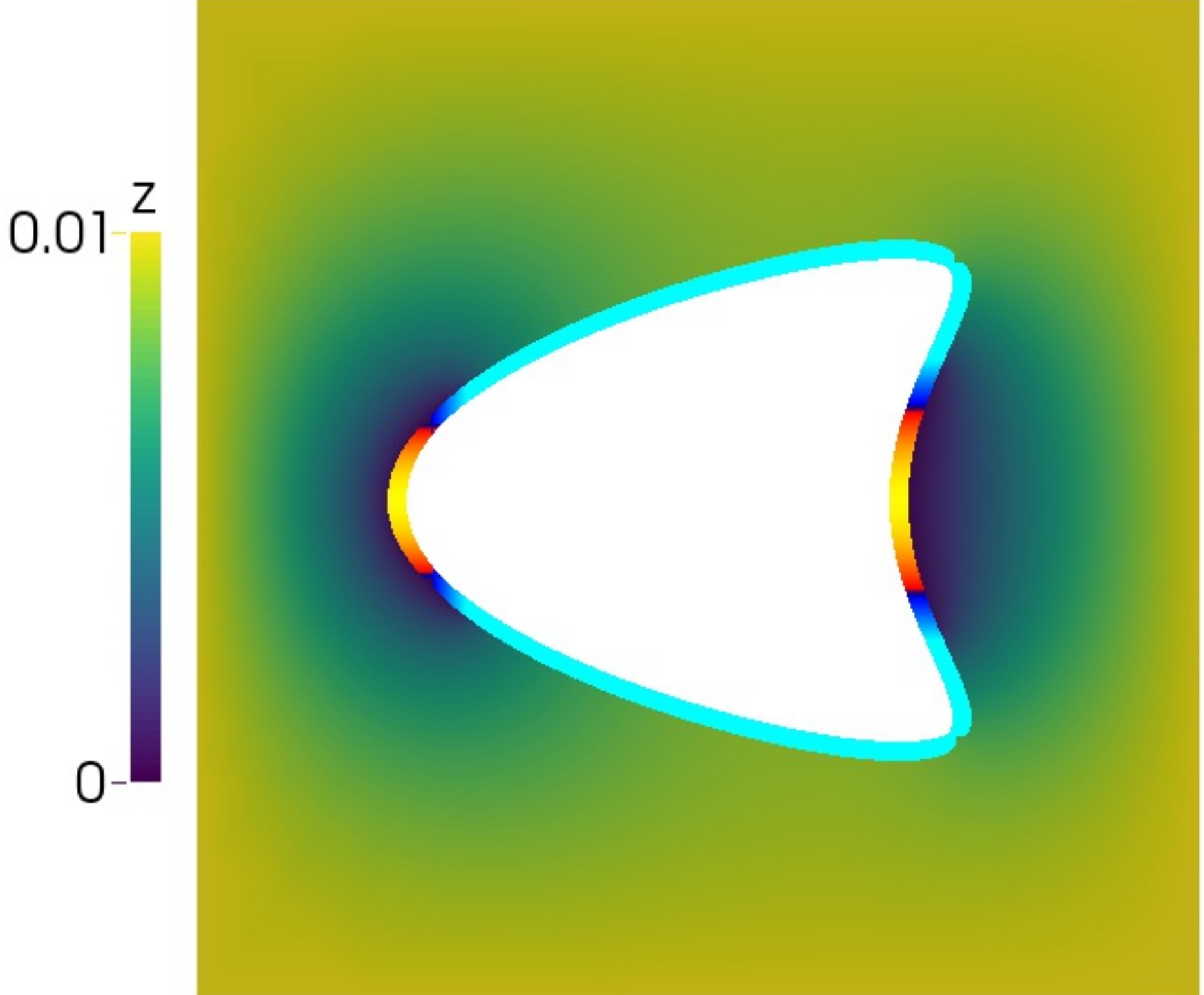}
\includegraphics[ trim = 10mm 110mm 10mm 30mm,  clip,  width=.24\textwidth
]{./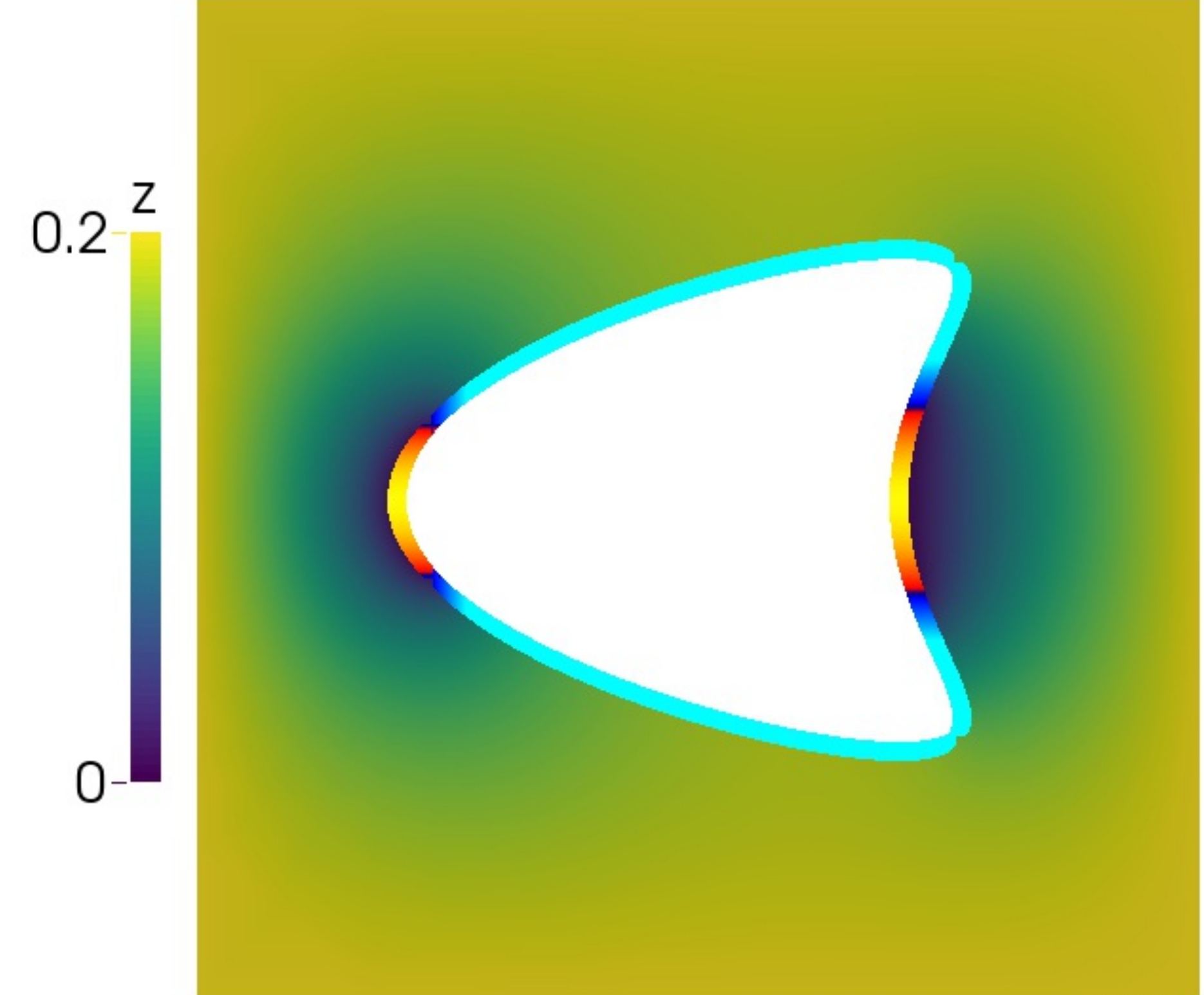}
\includegraphics[ trim = 10mm 110mm 10mm 30mm,  clip,  width=.24\textwidth
]{./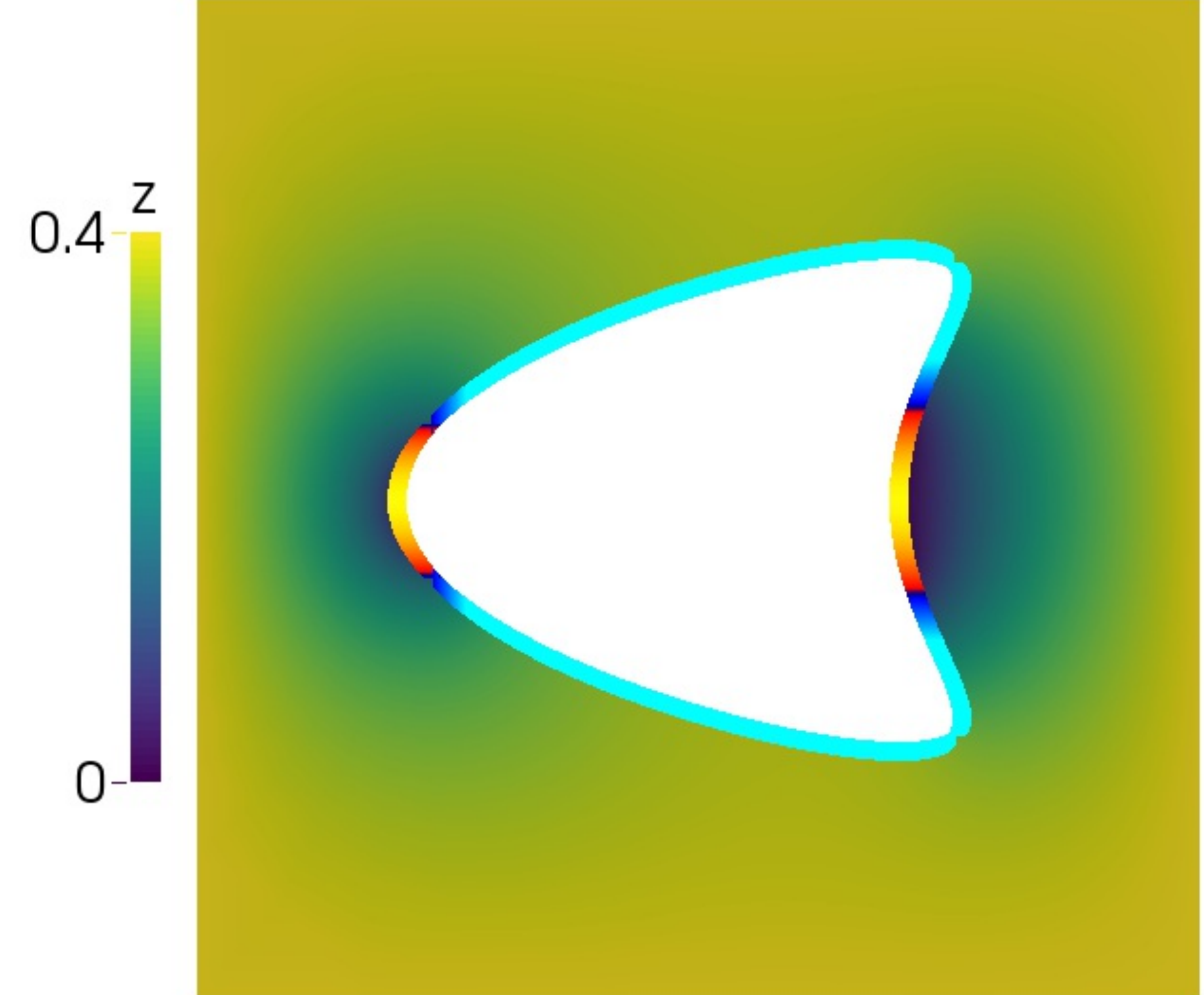}
\includegraphics[ trim = 10mm 110mm 10mm 30mm,  clip,  width=.24\textwidth
]{./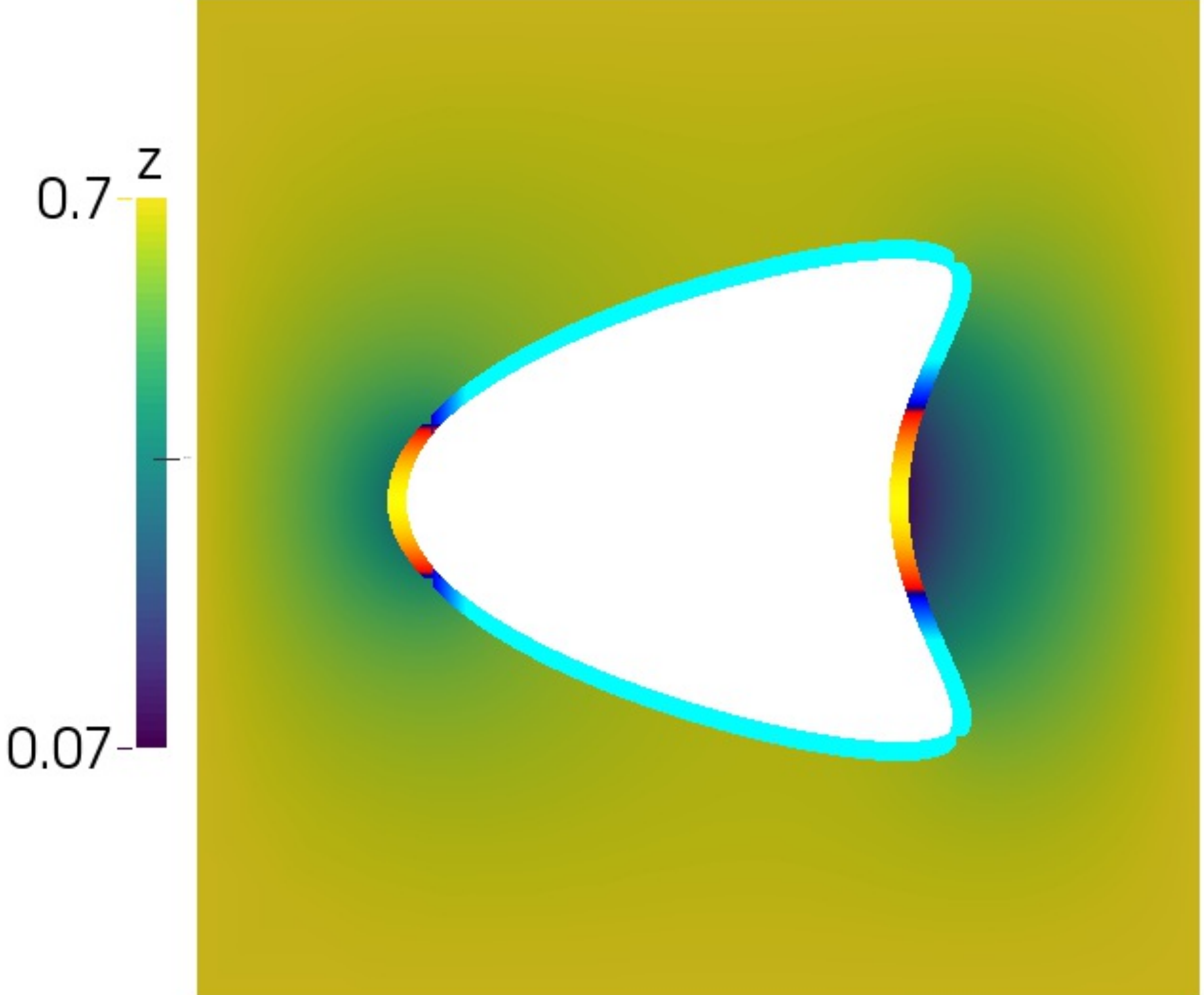}
\end{minipage}
\caption{Simulation results of \S \ref{sec:sim-2d}. Snapshots of the computed solution $Z$ together with the initial data  $W^0$ of the elliptic variational inequality (\ref{eqn:EVI}) at times $0.01, 0.2, 0.4$ and $0.7$ reading from left to right. The colour scale for $W^0$ is fixed in every figure.}
\label{fig:2d-res-z}
\end{figure}

\begin{figure}[htbp!]
\centering
\begin{minipage}{0.1\linewidth}
\includegraphics[ trim = 0mm 0mm 0mm 0mm,  clip,  width=\textwidth
]{./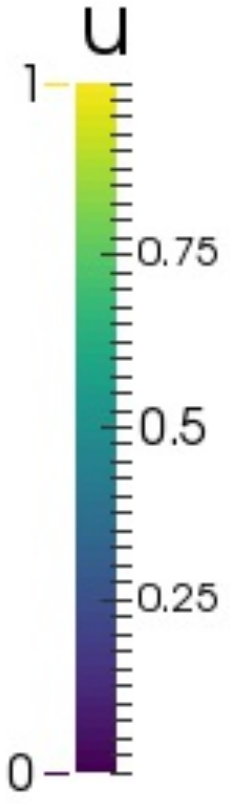}
\end{minipage}
\hskip .5em
\begin{minipage}{0.75\linewidth}
\centering
\subfigure[][{$\eps=10^{-1}$}]{
\includegraphics[ trim = 10mm 110mm 10mm 30mm,  clip,  width=.24\textwidth
]{./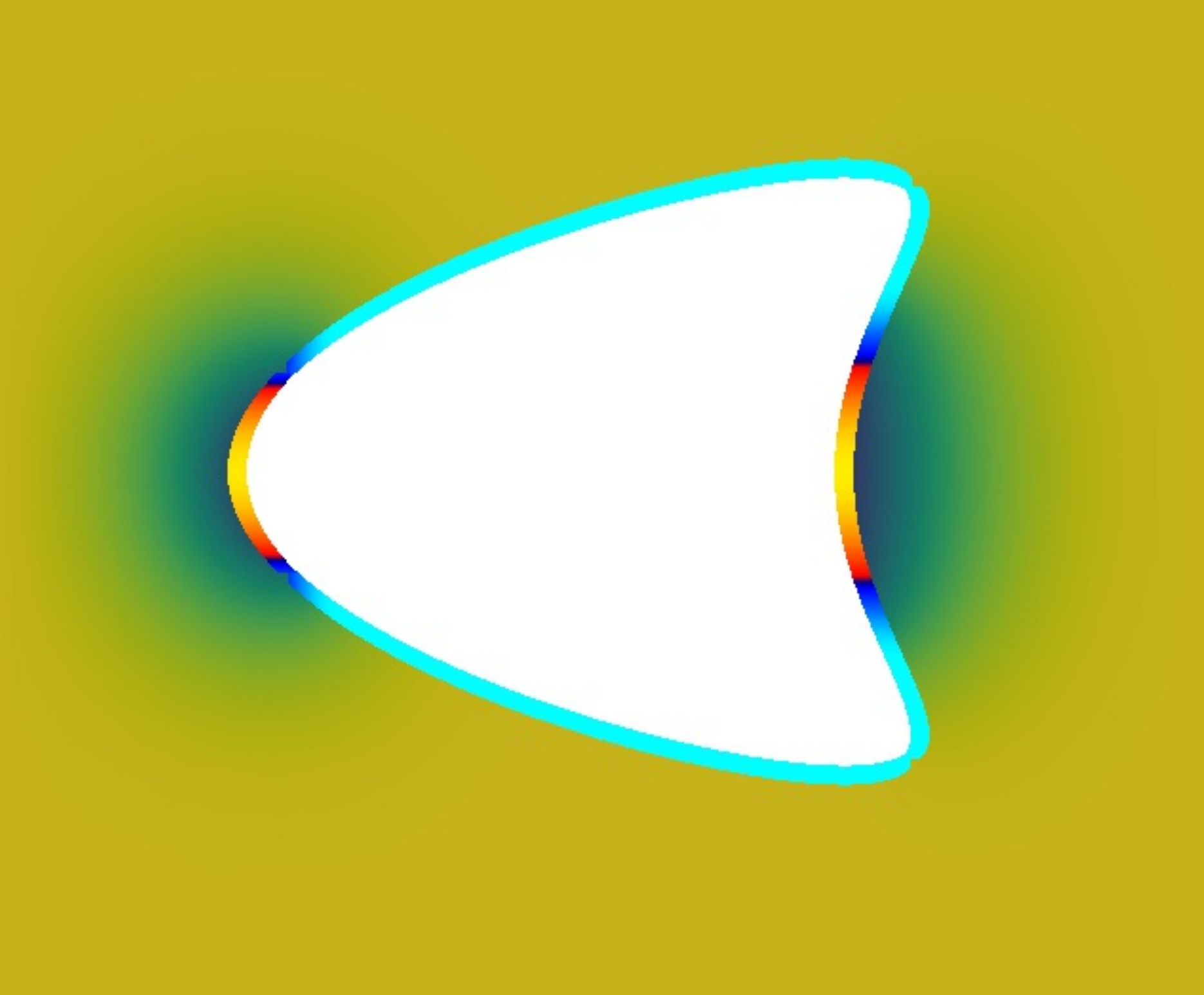}
\includegraphics[ trim = 10mm 110mm 10mm 30mm,  clip,  width=.24\textwidth
]{./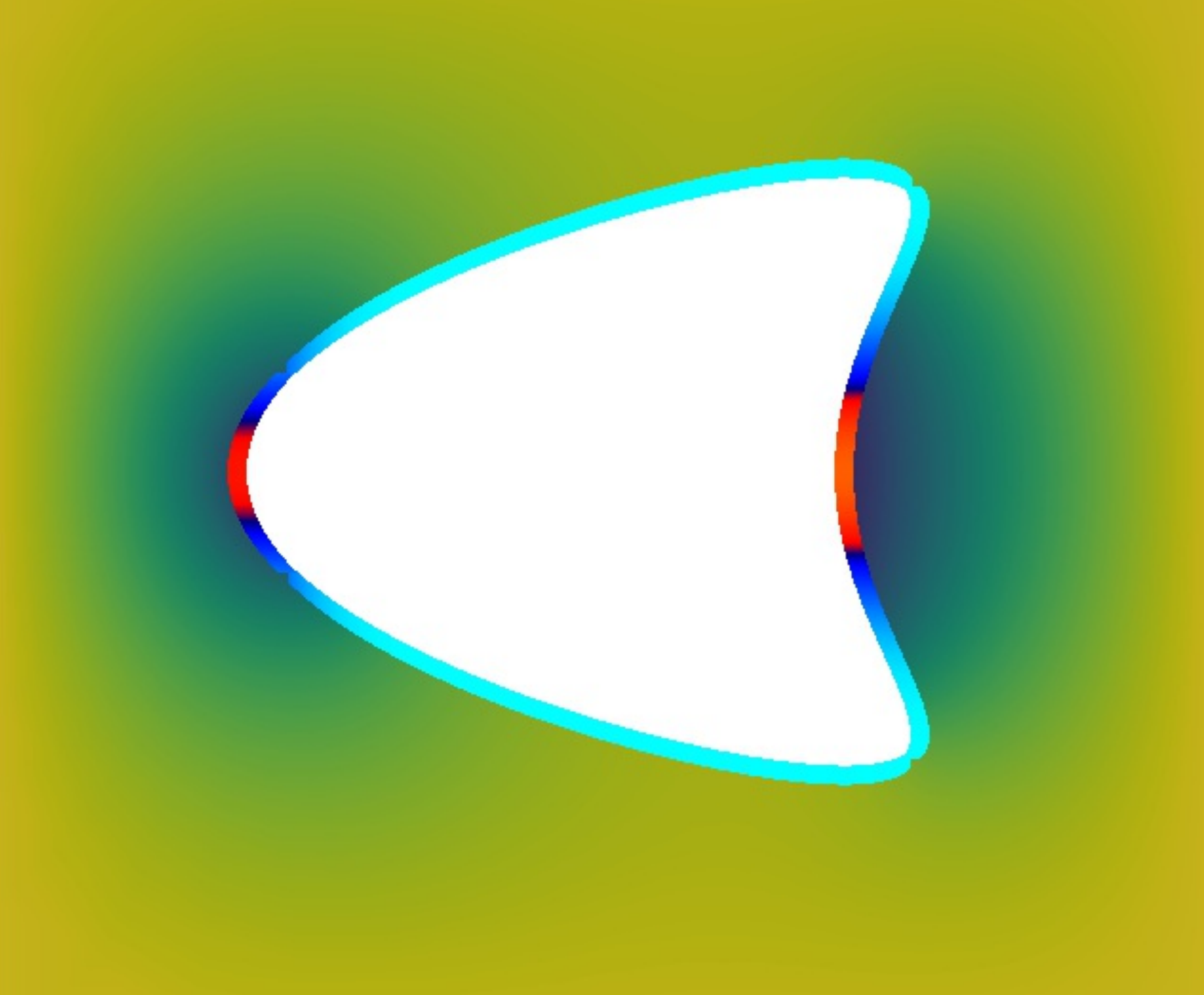}
\includegraphics[ trim = 10mm 110mm 10mm 30mm,  clip,  width=.24\textwidth
]{./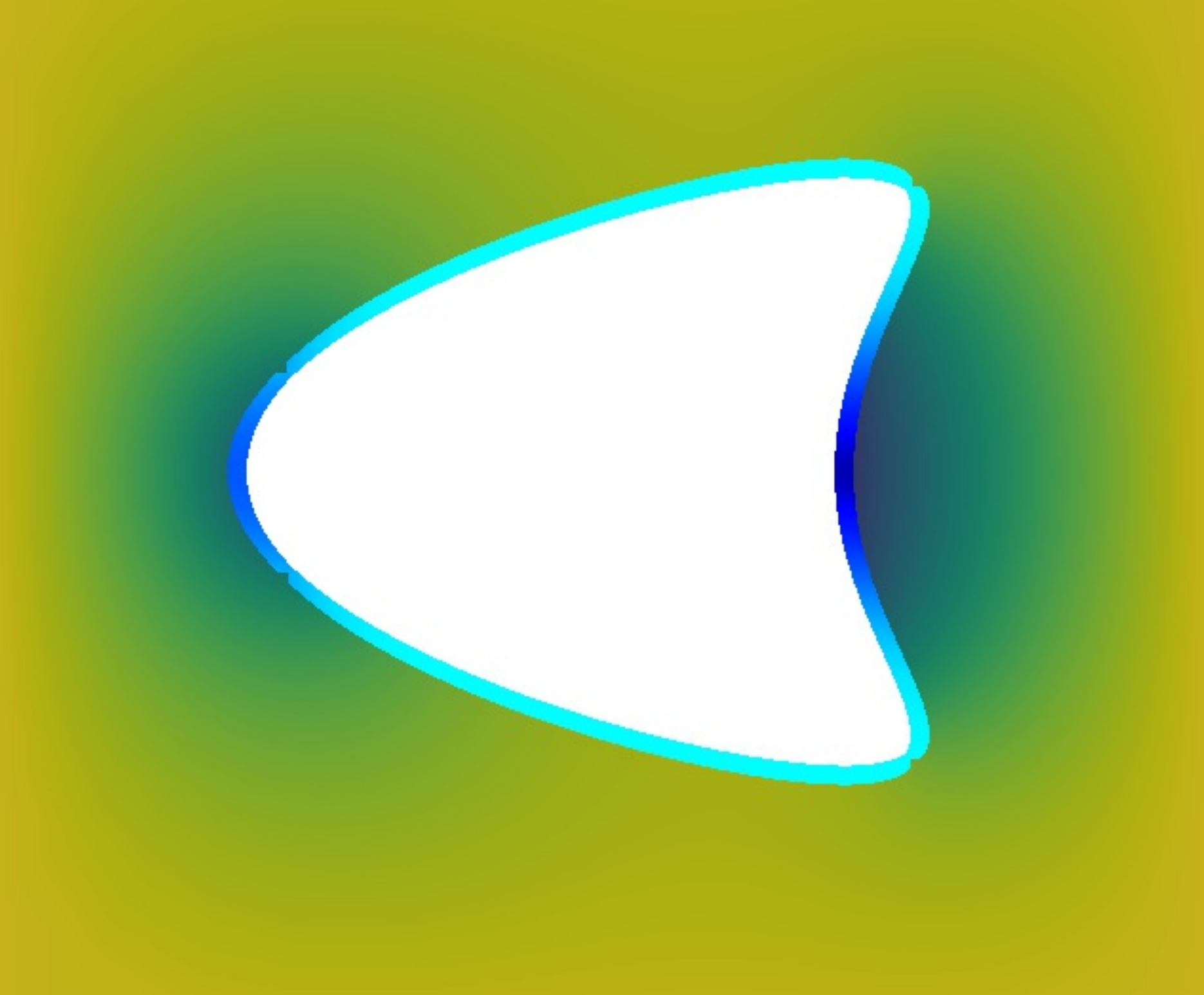}
\includegraphics[ trim = 10mm 110mm 10mm 30mm,  clip,  width=.24\textwidth
]{./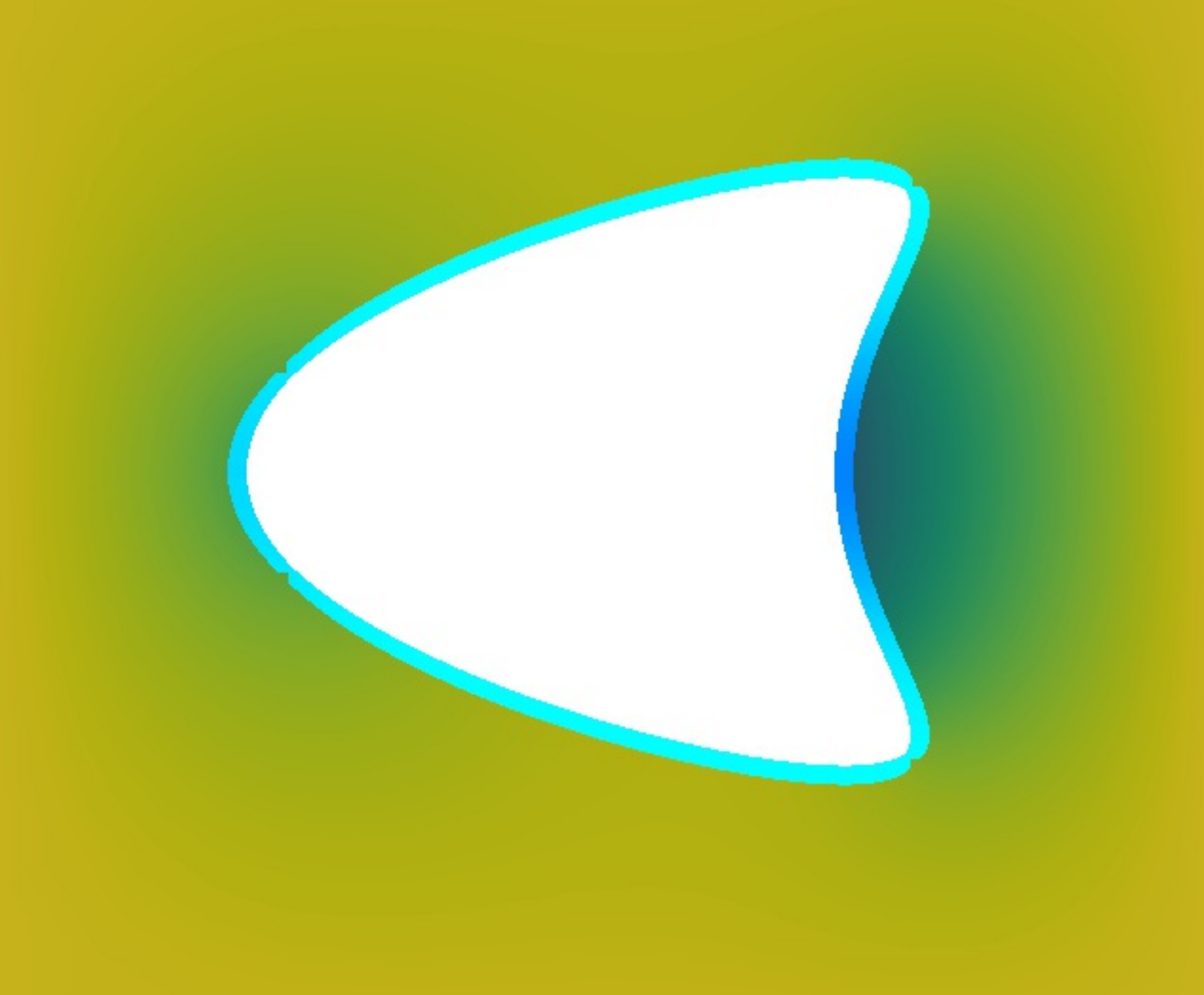}
}
\subfigure[][{$\eps=10^{-2}$}]{
\includegraphics[ trim = 10mm 110mm 10mm 30mm,  clip,  width=.24\textwidth
]{./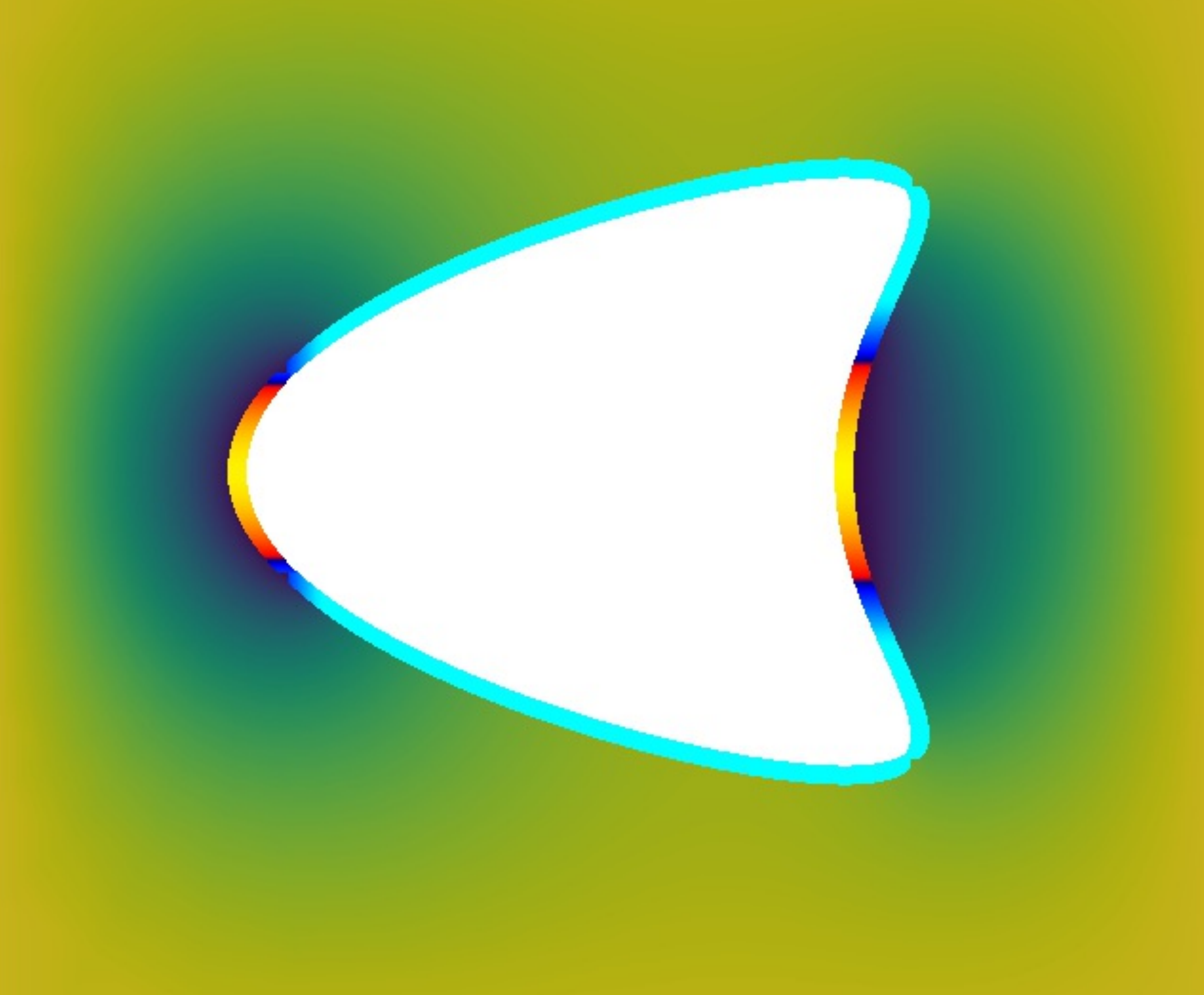}
\includegraphics[ trim = 10mm 110mm 10mm 30mm,  clip,  width=.24\textwidth
]{./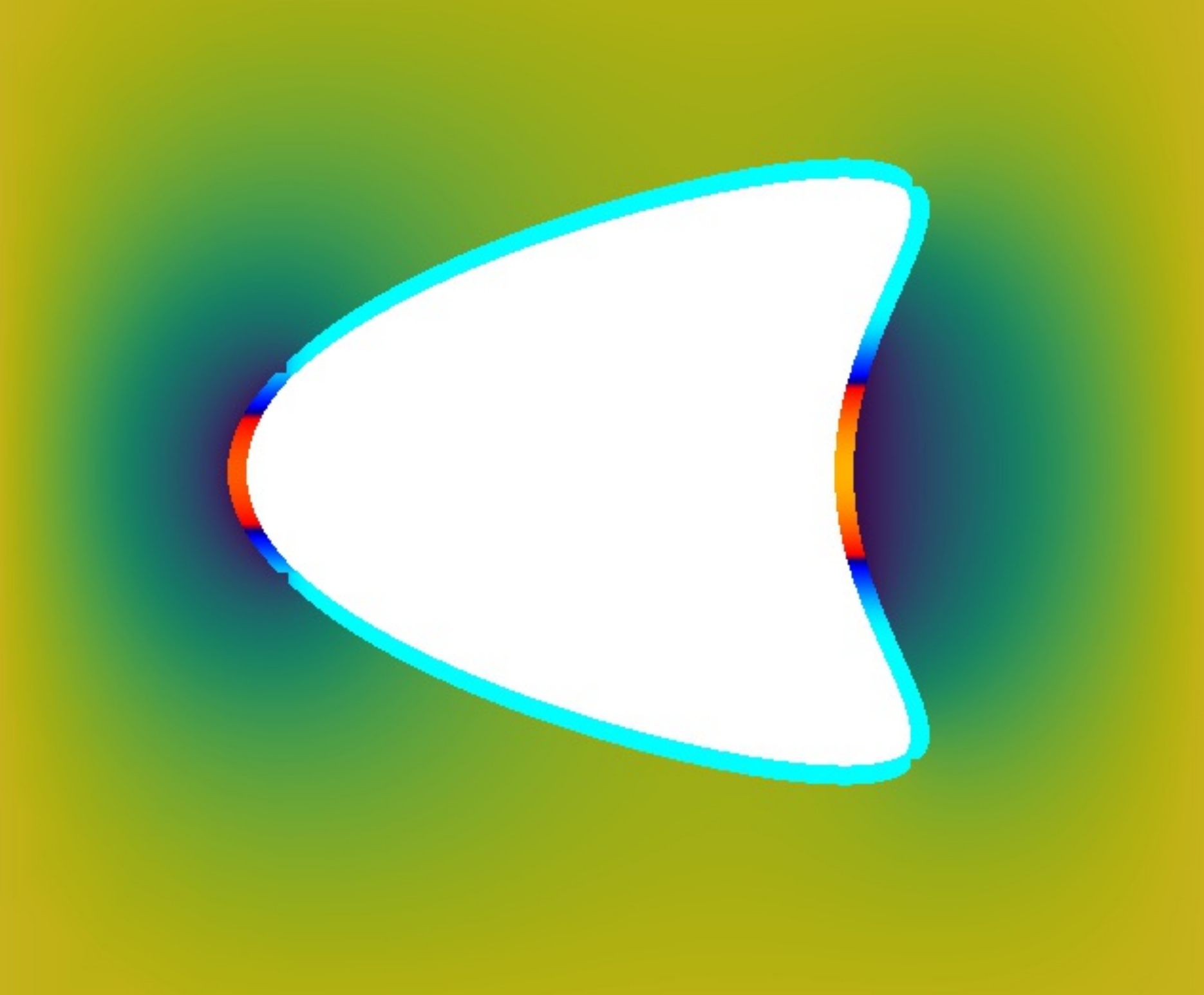}
\includegraphics[ trim = 10mm 110mm 10mm 30mm,  clip,  width=.24\textwidth
]{./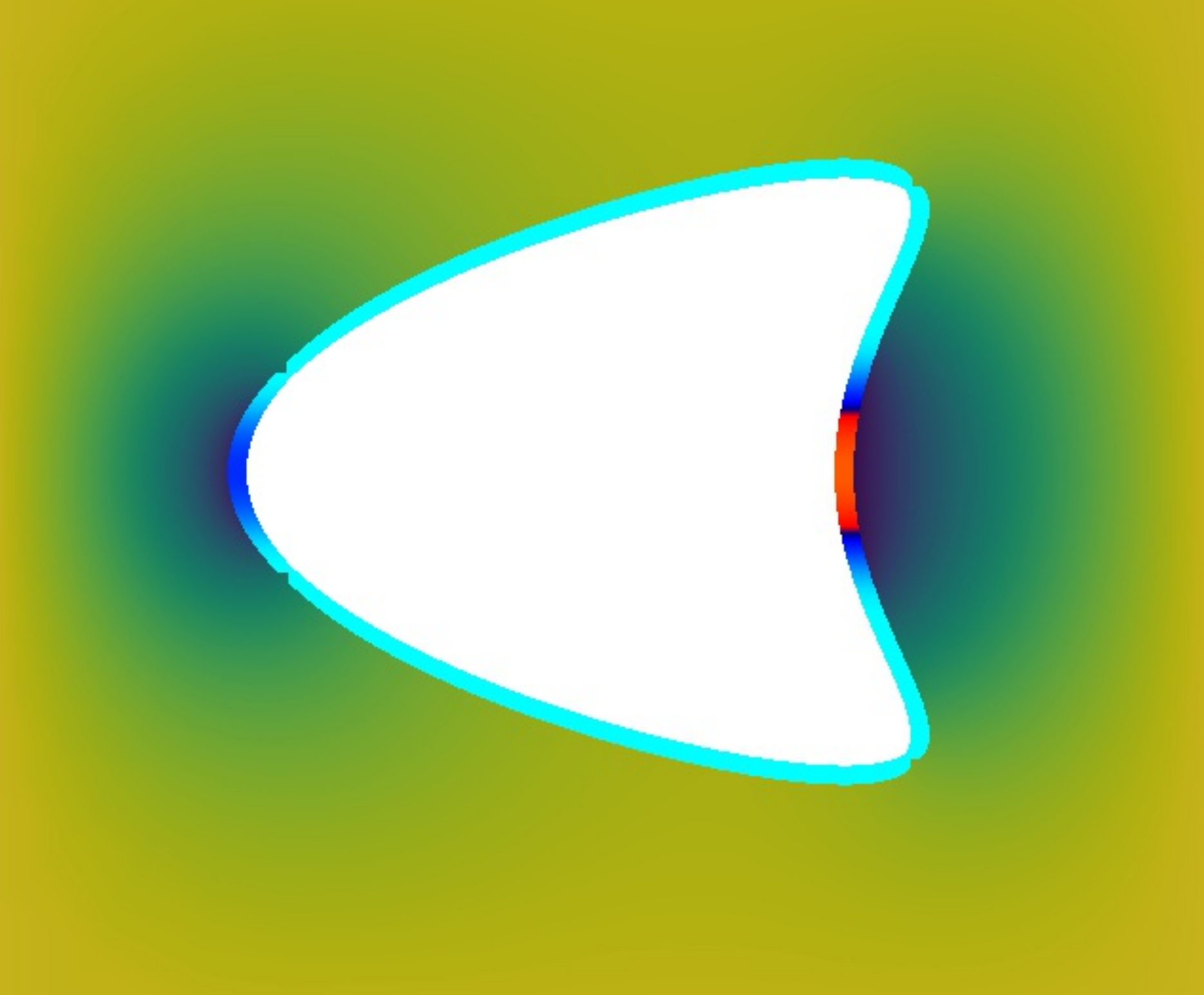}
\includegraphics[ trim = 10mm 110mm 10mm 30mm,  clip,  width=.24\textwidth
]{./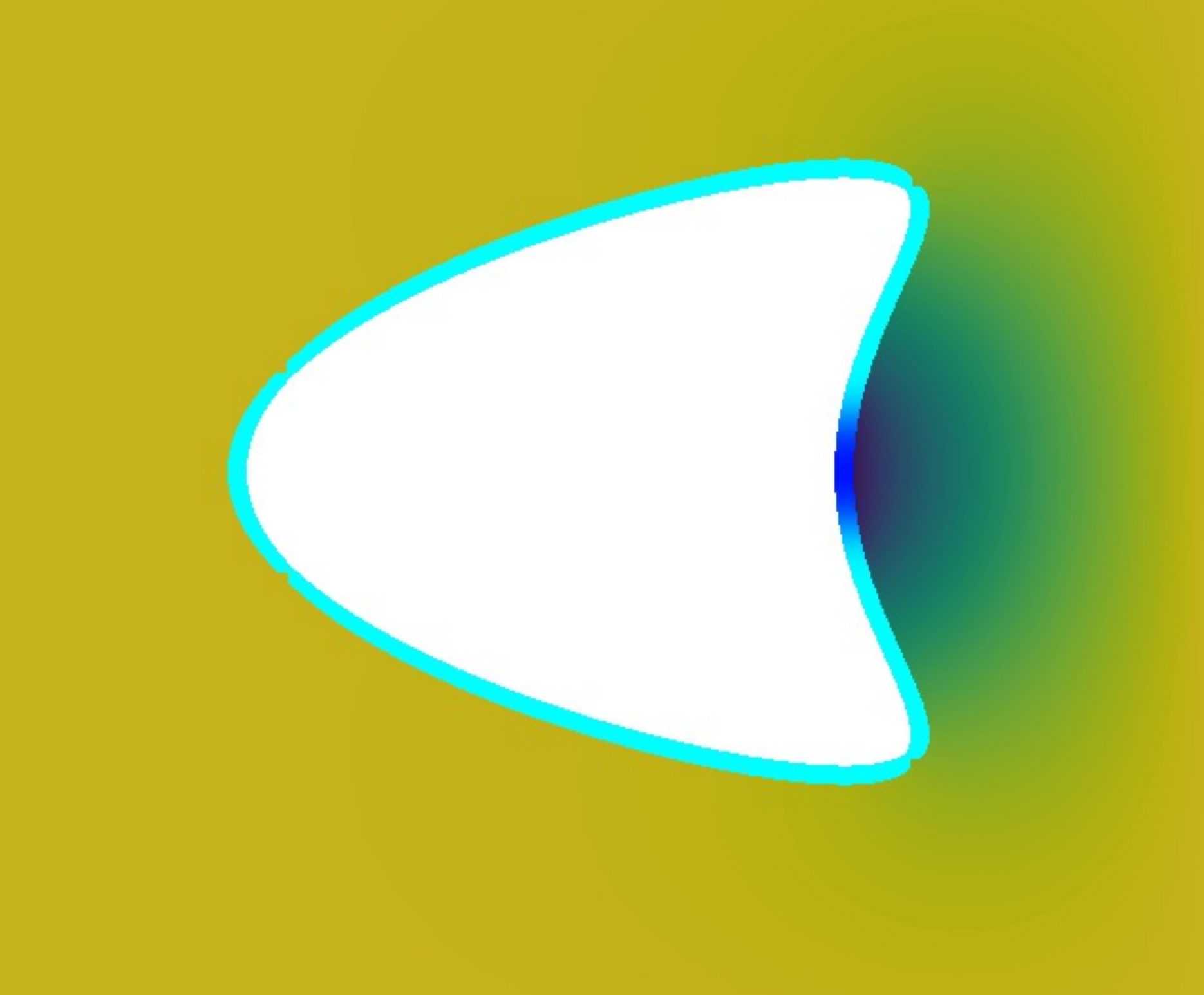}
}
\subfigure[][{$\eps=10^{-3}$}]{
\includegraphics[ trim = 10mm 110mm 10mm 30mm,  clip,  width=.24\textwidth
]{./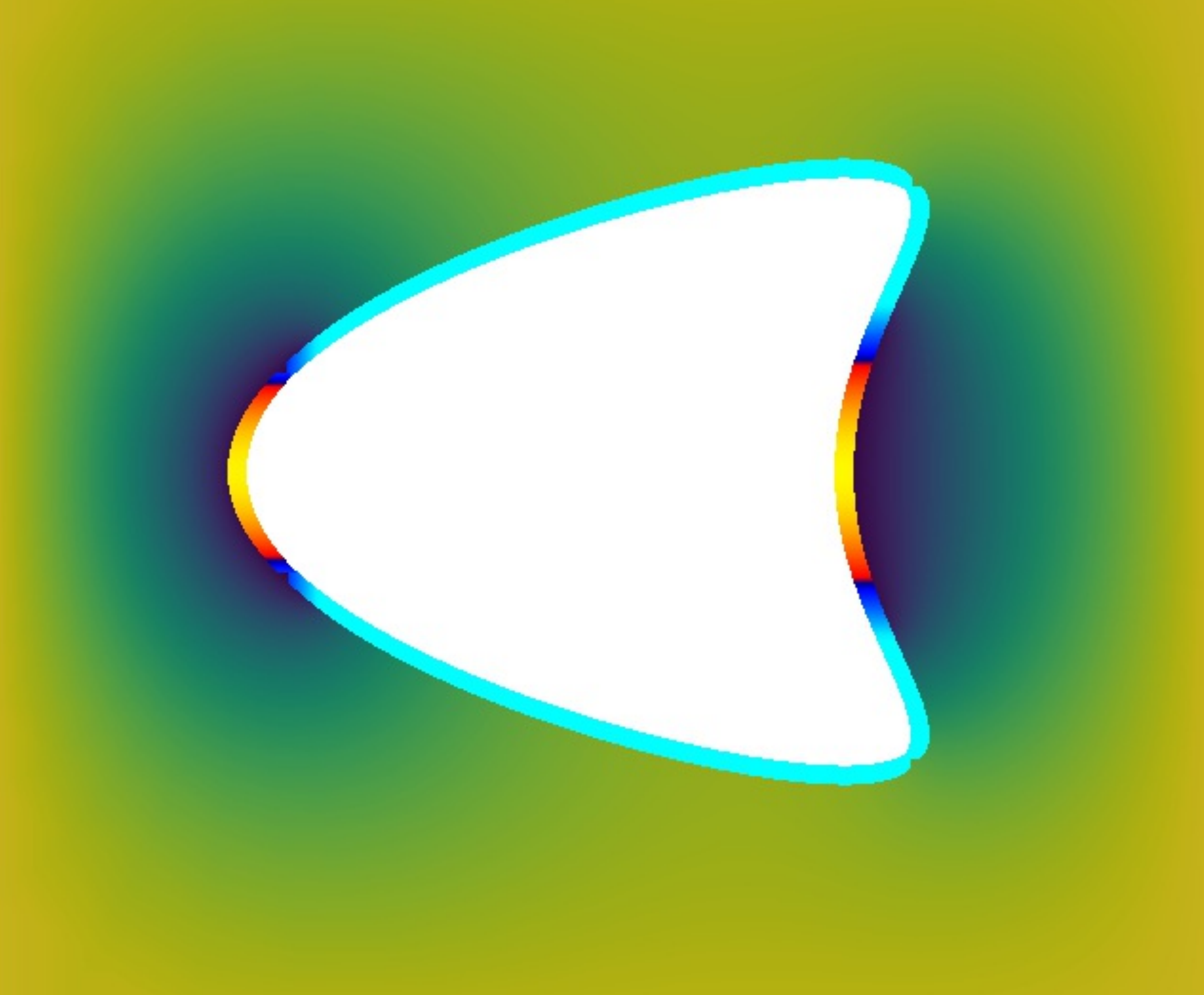}
\includegraphics[ trim = 10mm 110mm 10mm 30mm,  clip,  width=.24\textwidth
]{./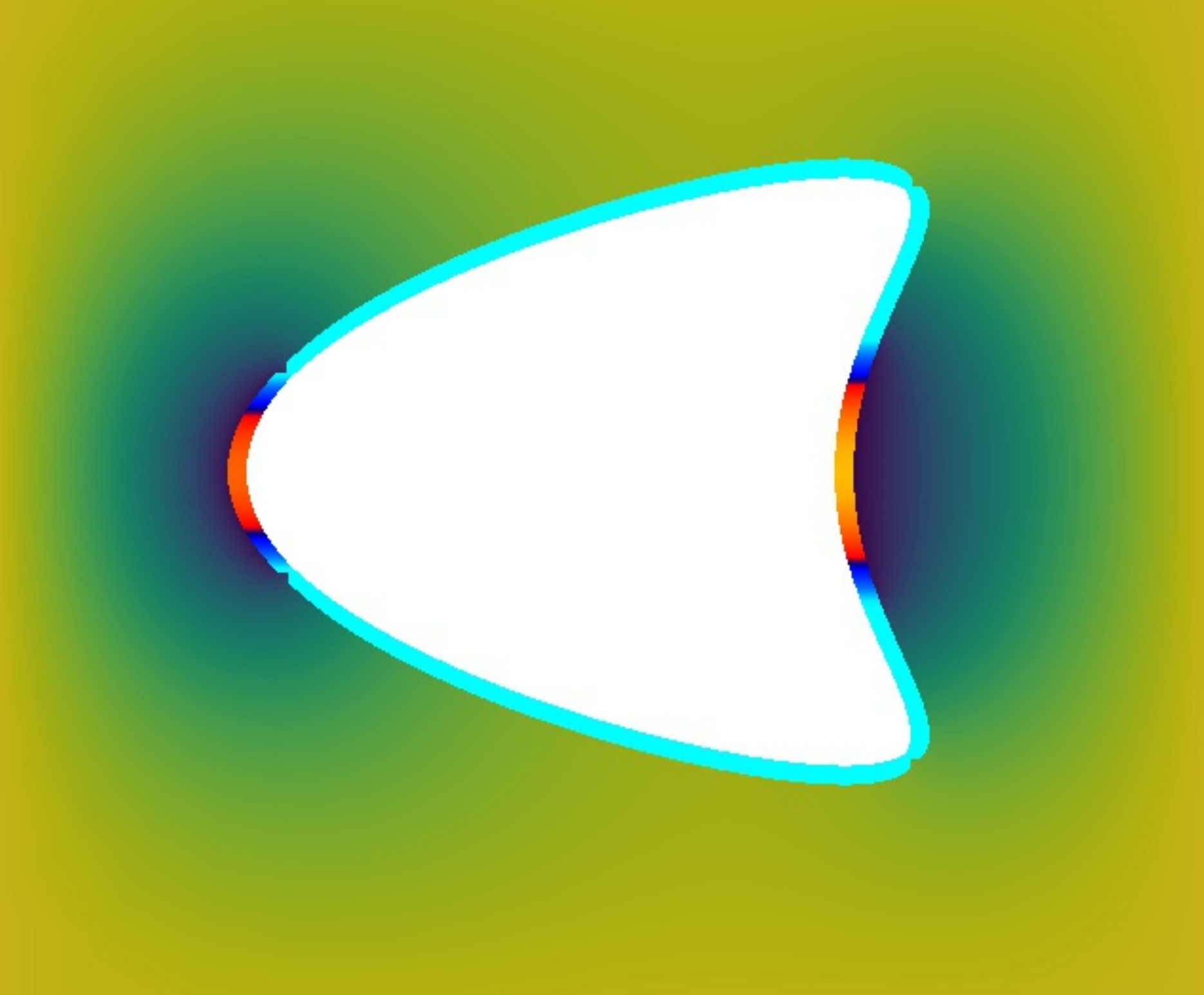}
\includegraphics[ trim = 10mm 110mm 10mm 30mm,  clip,  width=.24\textwidth
]{./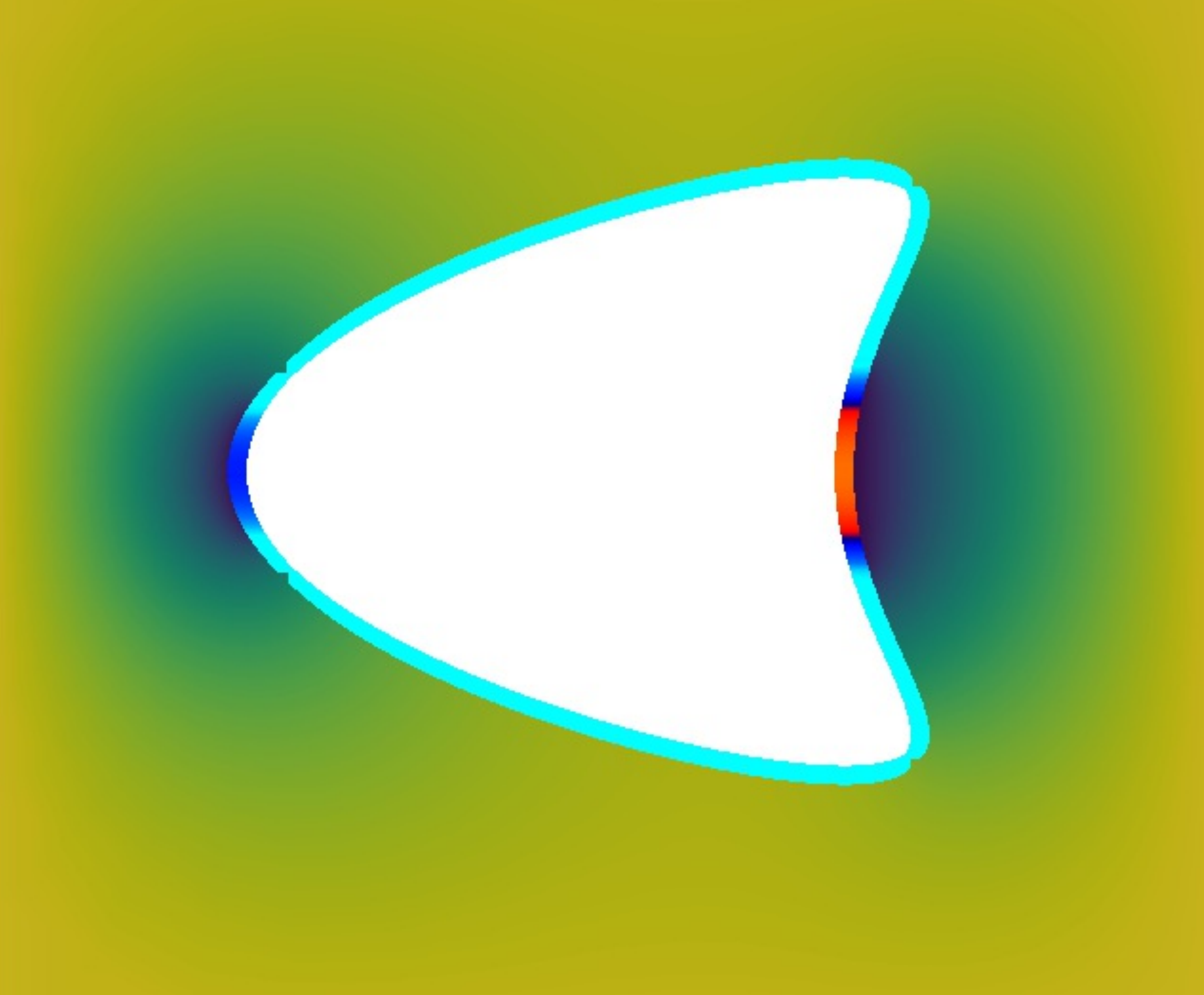}
\includegraphics[ trim = 10mm 110mm 10mm 30mm,  clip,  width=.24\textwidth
]{./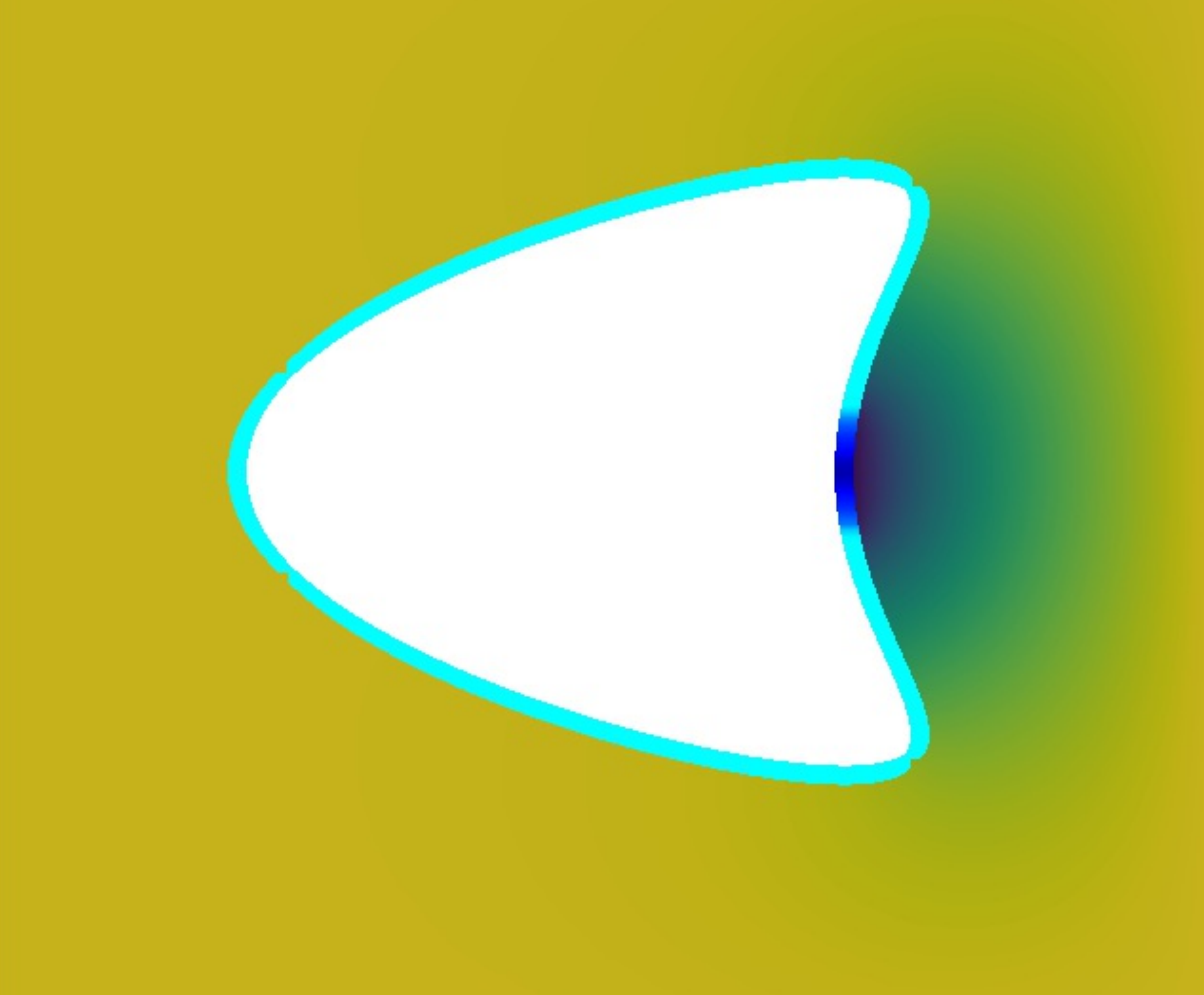}
}
\subfigure[][{$u^n=(z^{t_n}-z^{t_n-0.01})/0.01, \quad w^n=w^0+\nabla z^n\cdot\normal$}]{
\includegraphics[ trim = 10mm 110mm 10mm 30mm,  clip,  width=.24\textwidth
]{./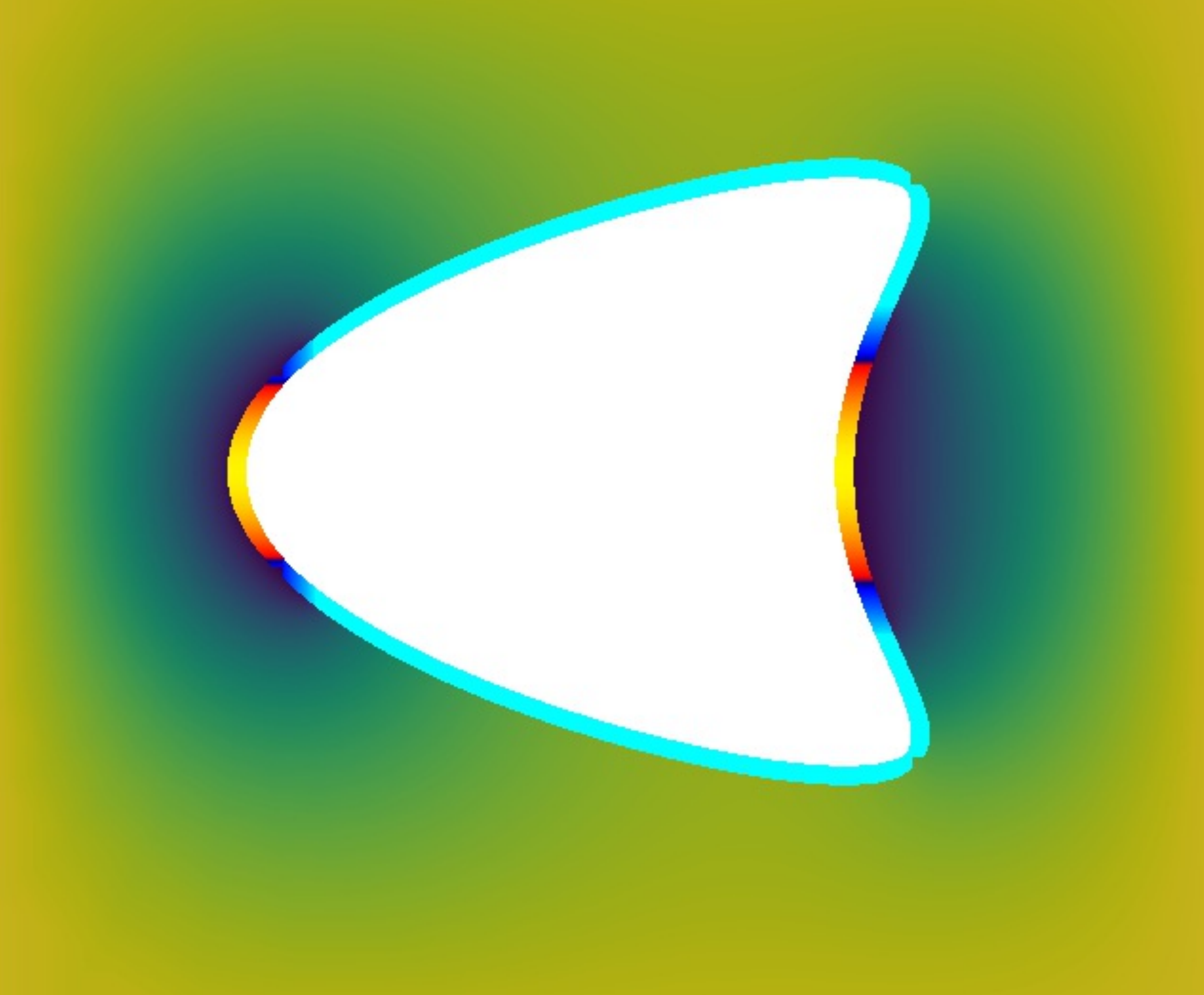}
\includegraphics[ trim = 10mm 110mm 10mm 30mm,  clip,  width=.24\textwidth
]{./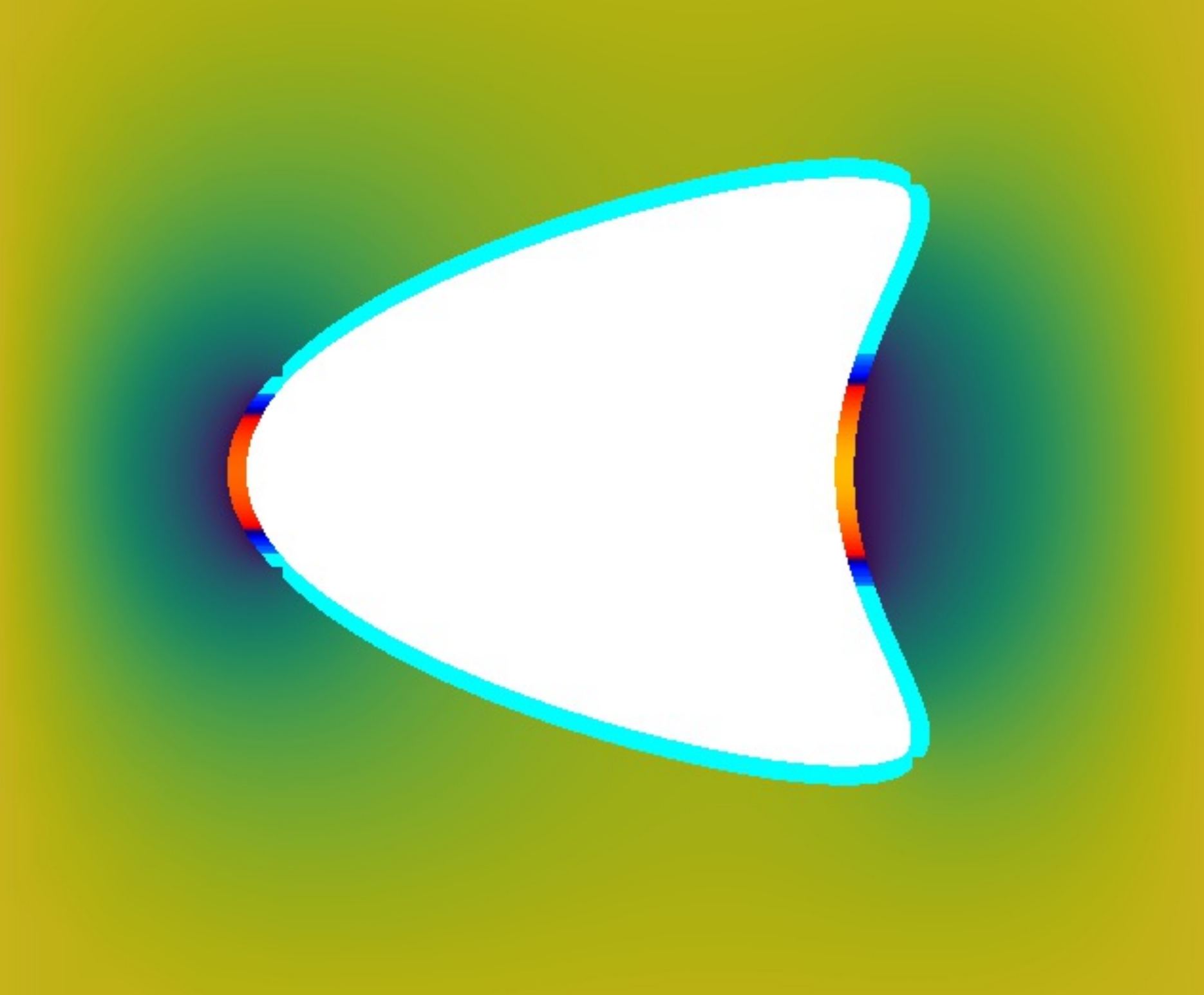}
\includegraphics[ trim = 10mm 110mm 10mm 30mm,  clip,  width=.24\textwidth
]{./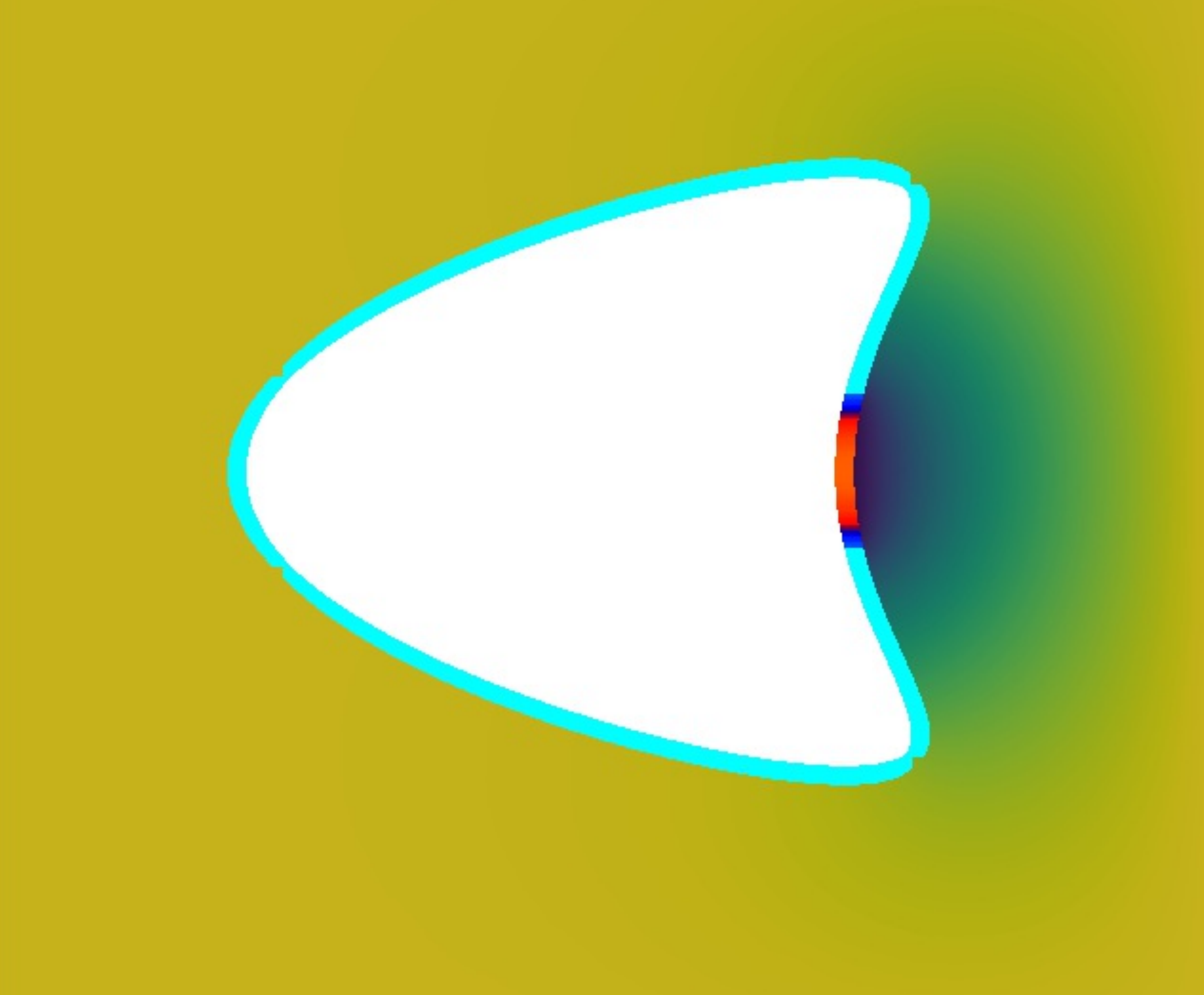}
\includegraphics[ trim = 10mm 110mm 10mm 30mm,  clip,  width=.24\textwidth
]{./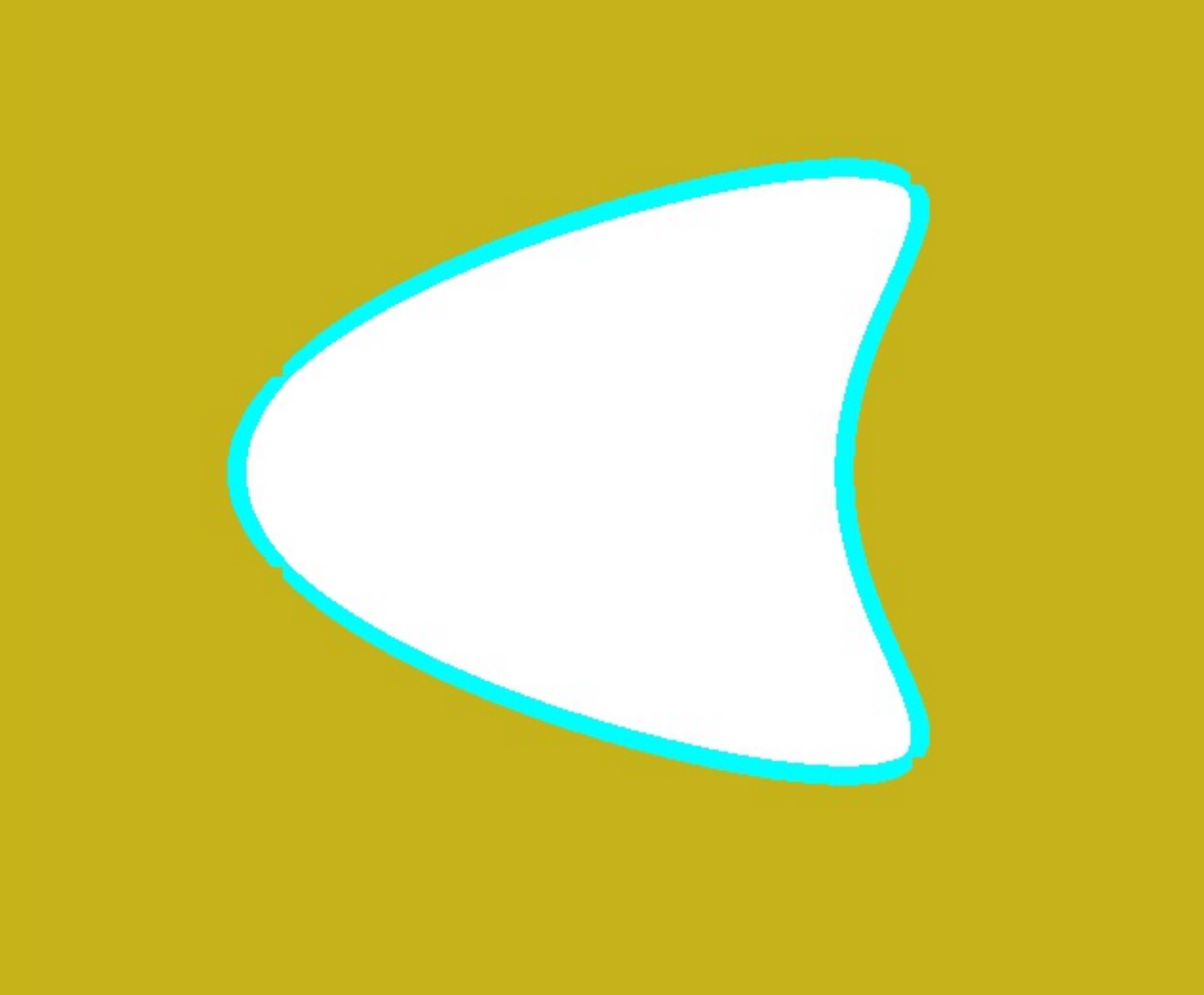}
}
\end{minipage}
\hskip .5em
\begin{minipage}{0.1\linewidth}
\includegraphics[ trim = 0mm 0mm 0mm 0mm,  clip,  width=\textwidth
]{./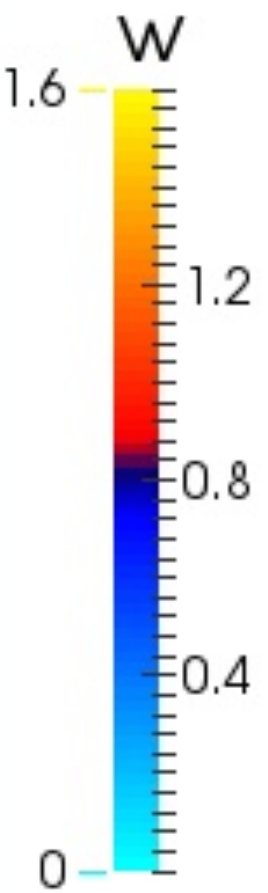}
\end{minipage}
\caption{Simulation results of \S \ref{sec:sim-2d}. (First three rows) Snapshots of the computed solutions $U$ and $W$ of (\ref{eqn:wf_eps_problem}) in 2D at times $0.01, 0.2, 0.4$ and $0.7$ (reading from left to right) for different values of $\eps=\delO=\delG$. The fourth row shows the computed solutions $U$ and $W=-V$ post-processed from solving the elliptic variational inequality (\ref{eqn:EVI}) at times $0.01, 0.2, 0.4$ and $0.7$ reading from left to right.}\label{fig:2d-res-uv}
\end{figure}

\begin{figure}[htbp!]
\centering
\subfigure[][{$\eps=10^{-1}$}]{
\includegraphics[ trim = 5mm 140mm 20mm 10mm,  clip,  width=.24\textwidth
]{./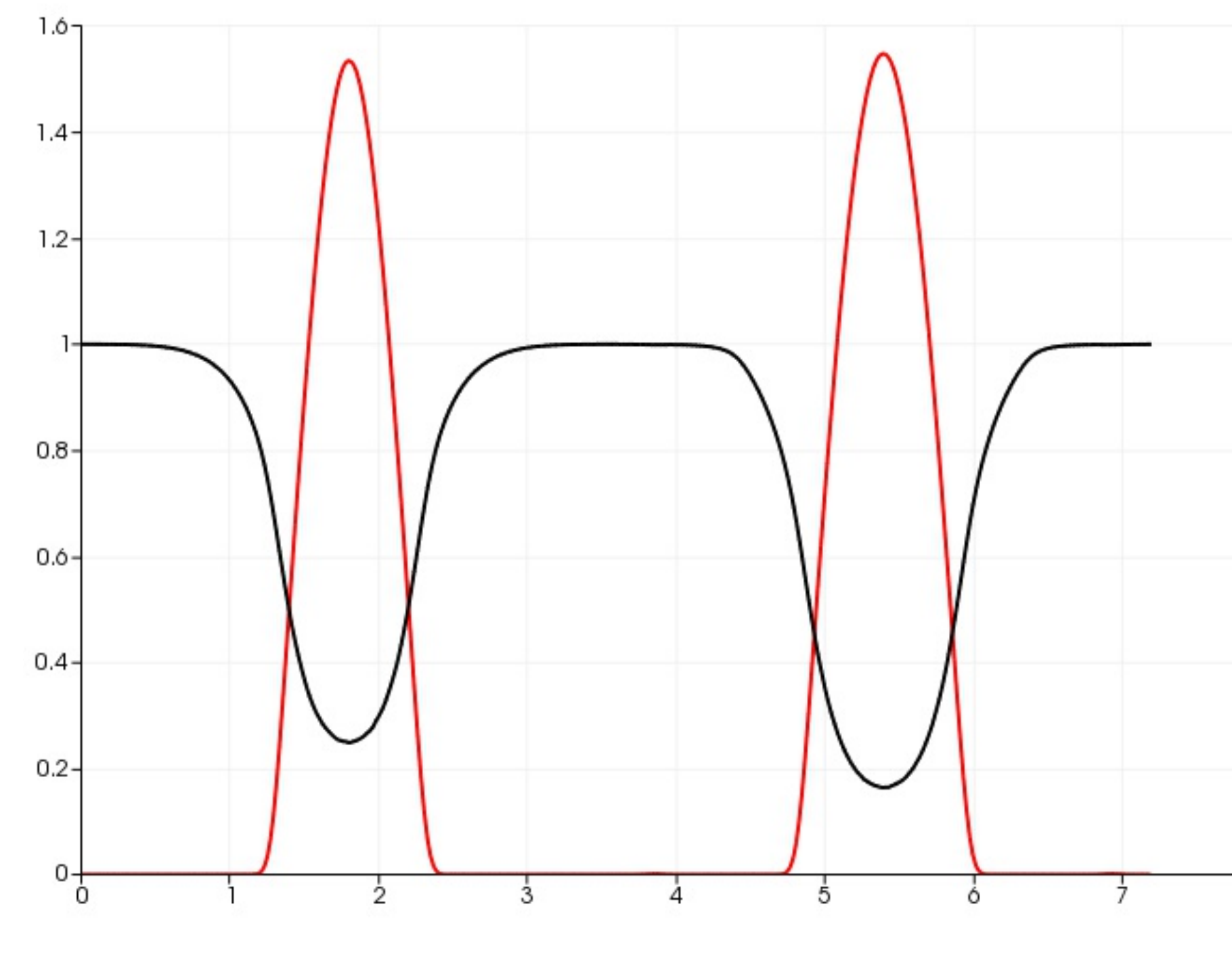}
\includegraphics[ trim = 5mm 140mm 20mm 10mm,  clip,  width=.24\textwidth
]{./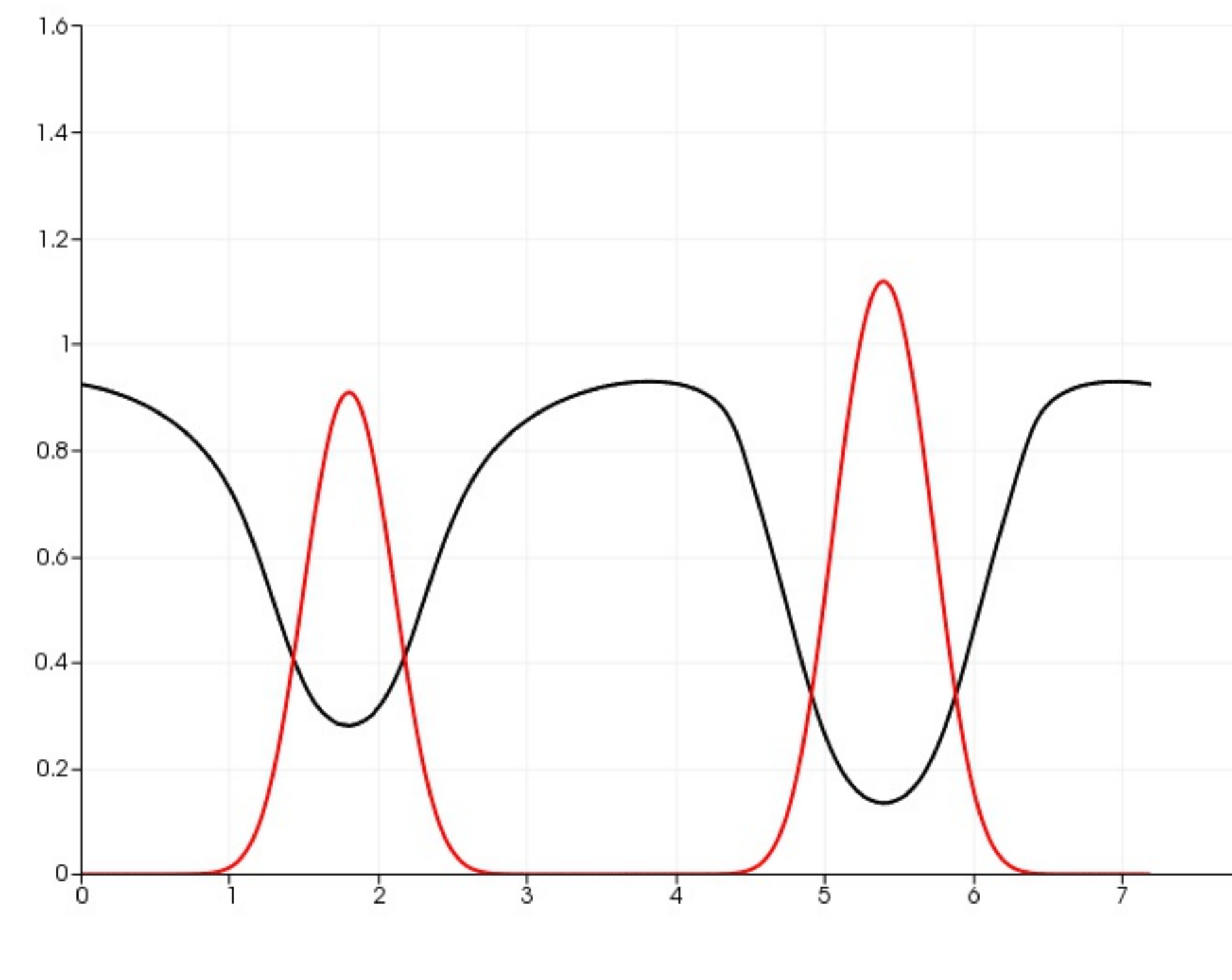}
\includegraphics[ trim = 5mm 140mm 20mm 10mm,  clip,  width=.24\textwidth
]{./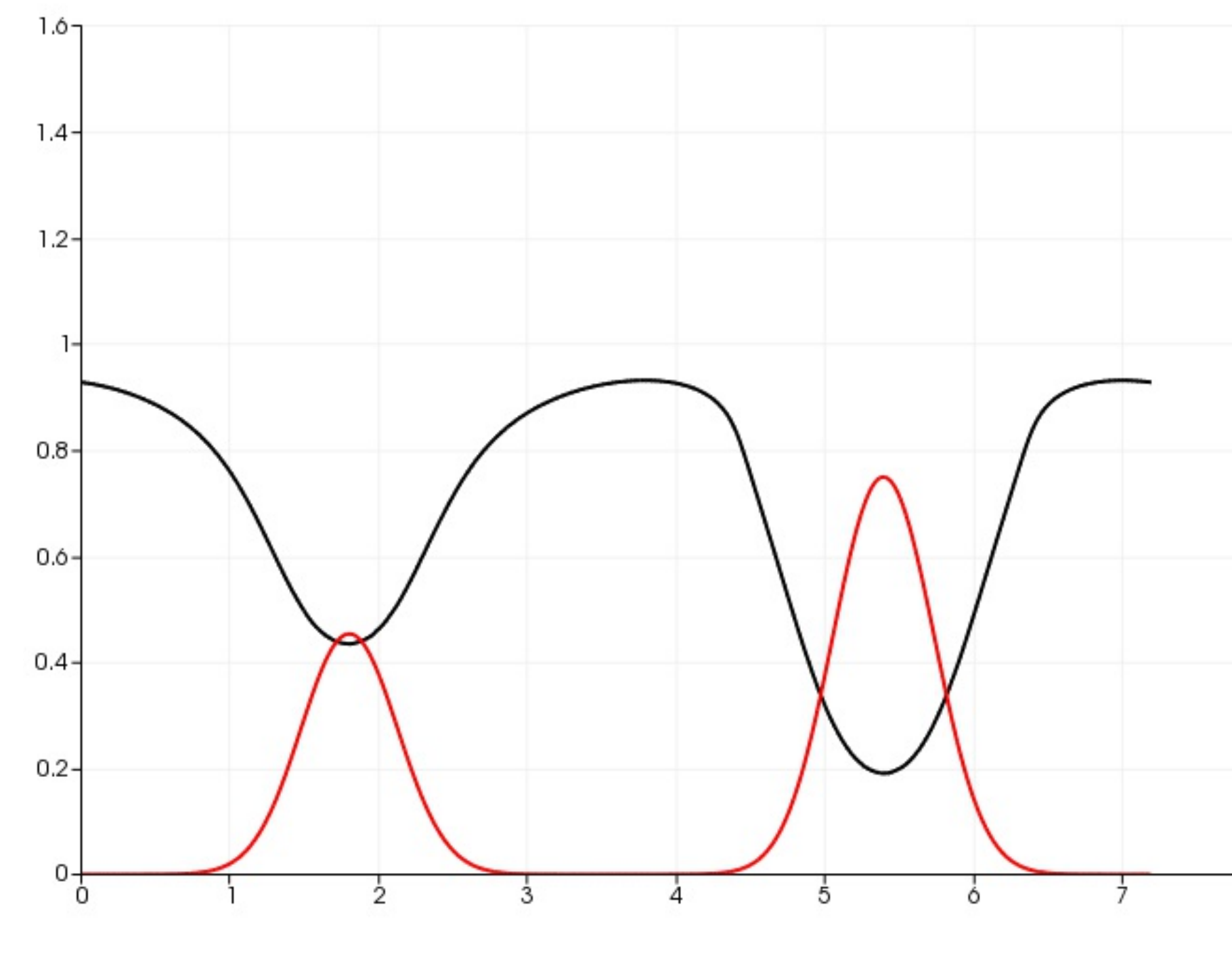}
\includegraphics[ trim = 5mm 140mm 20mm 10mm,  clip,  width=.24\textwidth
]{./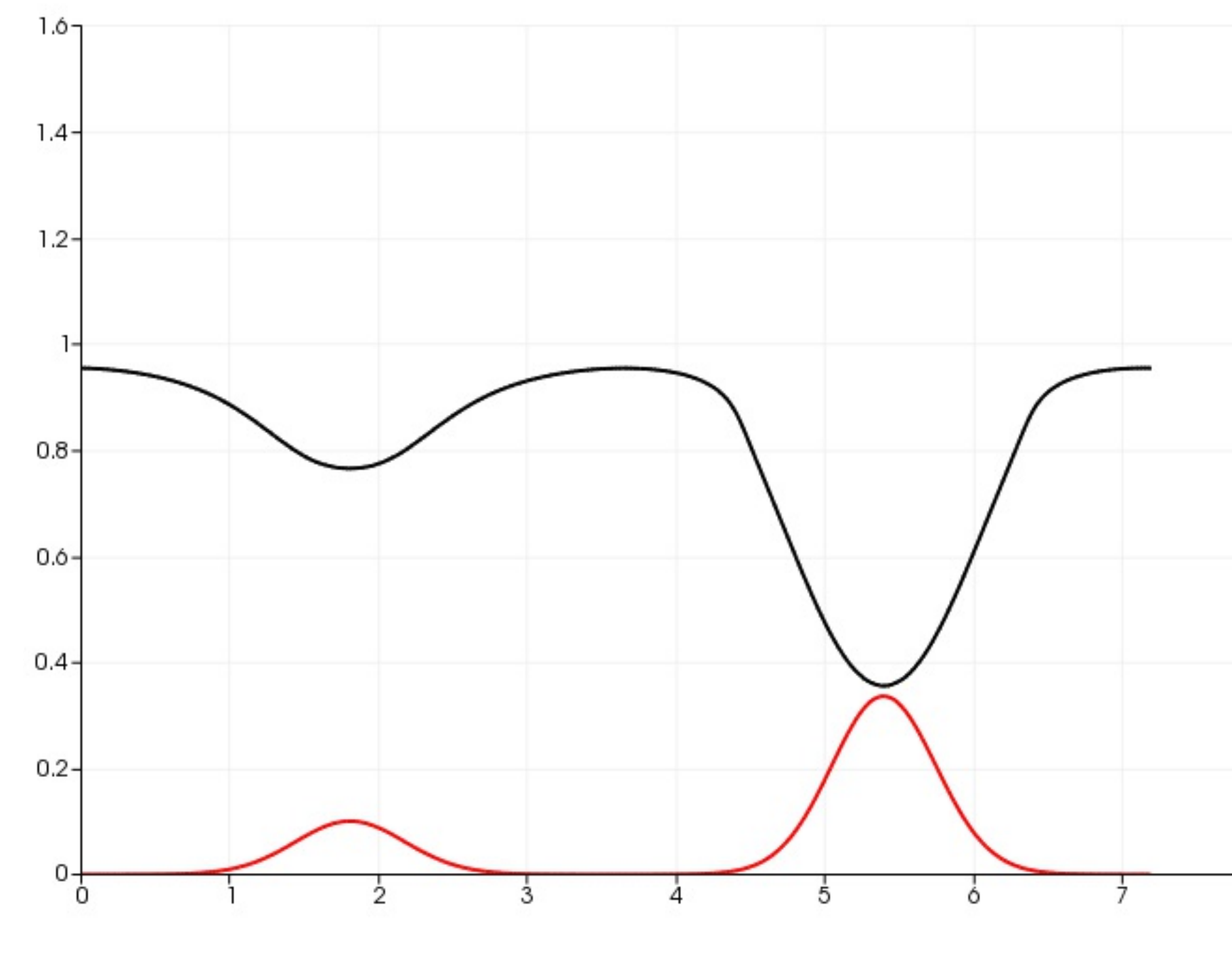}
}
\subfigure[][{$\eps=10^{-2}$}]{
\includegraphics[ trim = 5mm 140mm 20mm 10mm,  clip,  width=.24\textwidth
]{./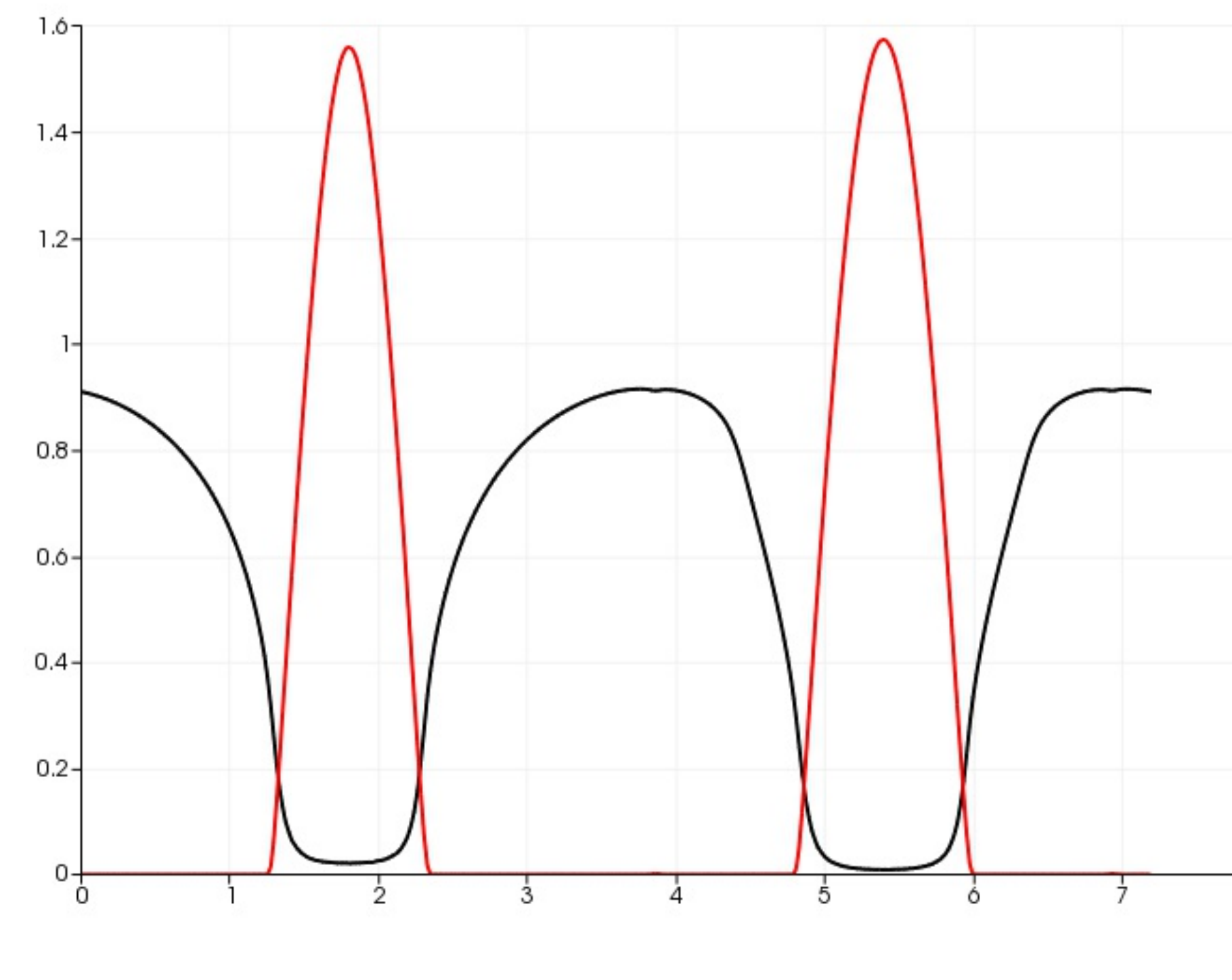}
\includegraphics[ trim = 5mm 140mm 20mm 10mm,  clip,  width=.24\textwidth
]{./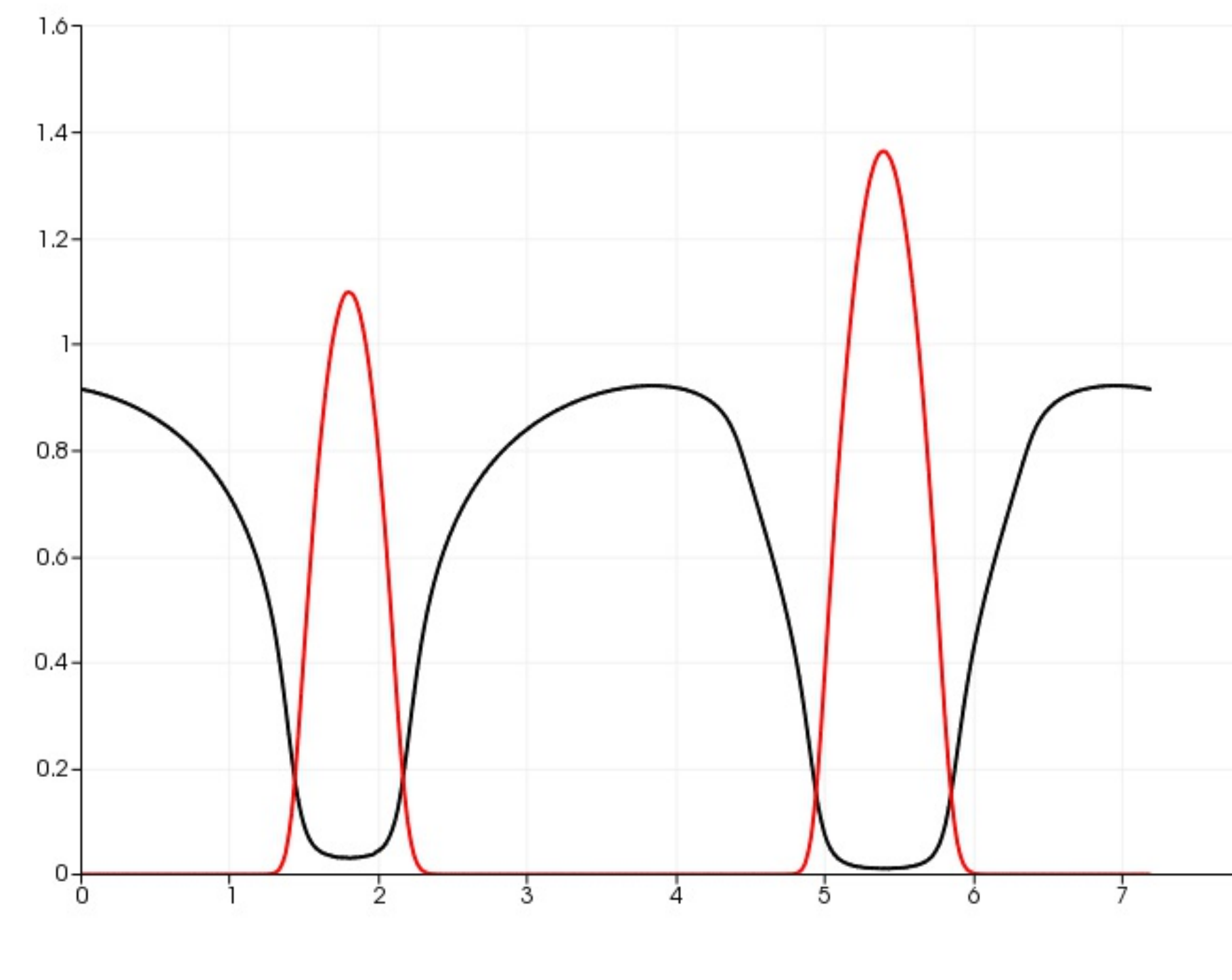}
\includegraphics[ trim = 5mm 140mm 20mm 10mm,  clip,  width=.24\textwidth
]{./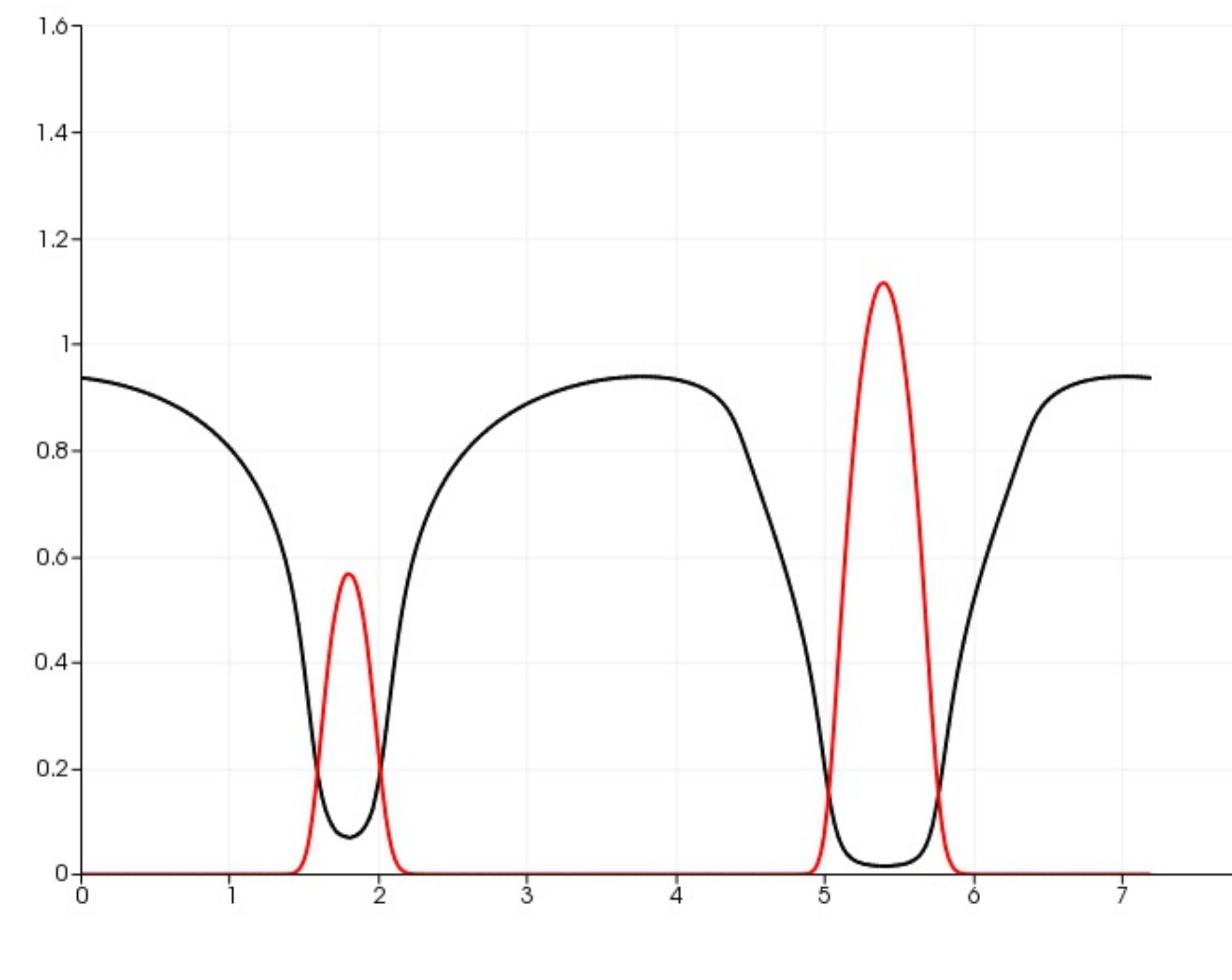}
\includegraphics[ trim = 5mm 140mm 20mm 10mm,  clip,  width=.24\textwidth
]{./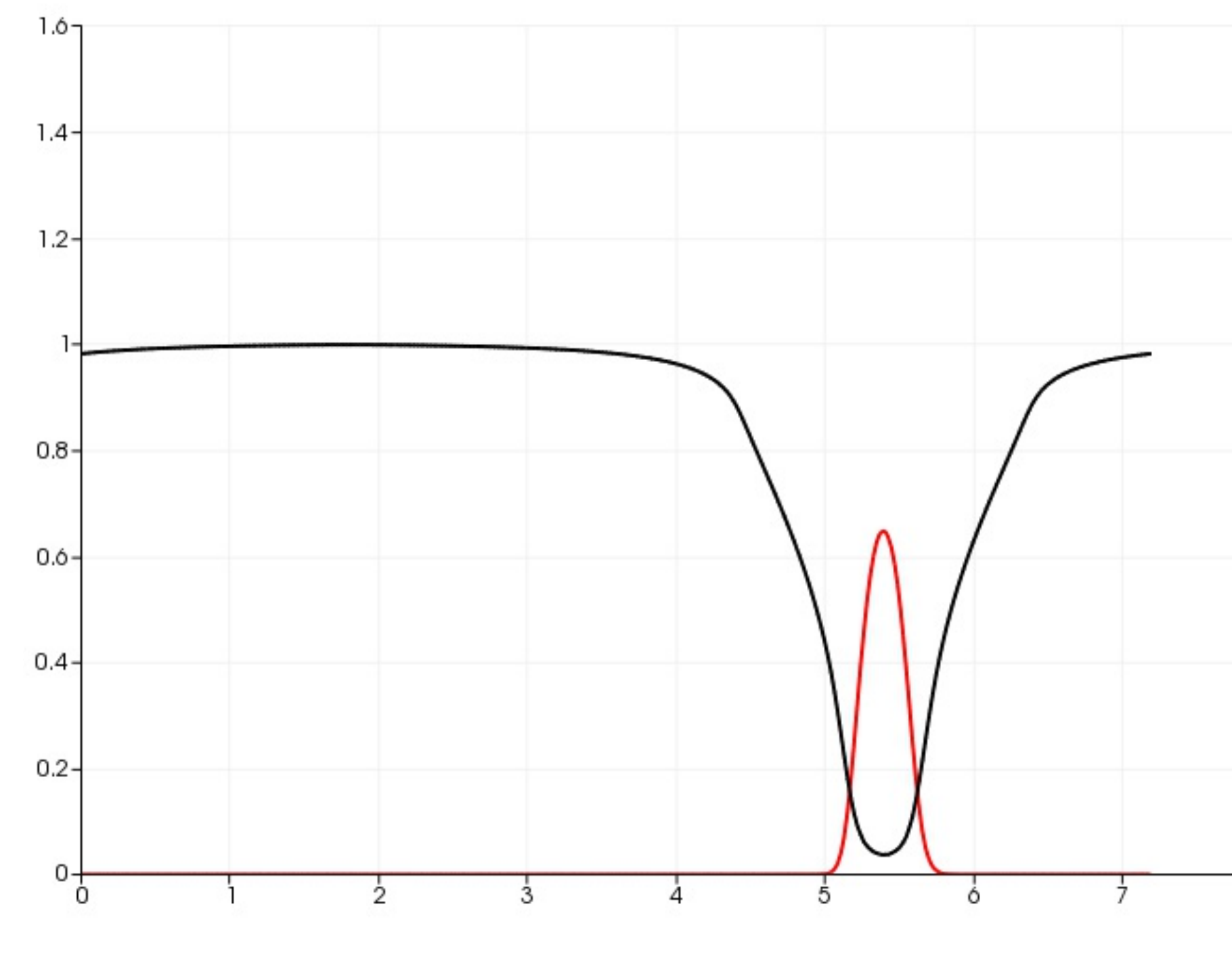}
}
\subfigure[][{$\eps=10^{-3}$}]{
\includegraphics[ trim = 5mm 140mm 20mm 10mm,  clip,  width=.24\textwidth
]{./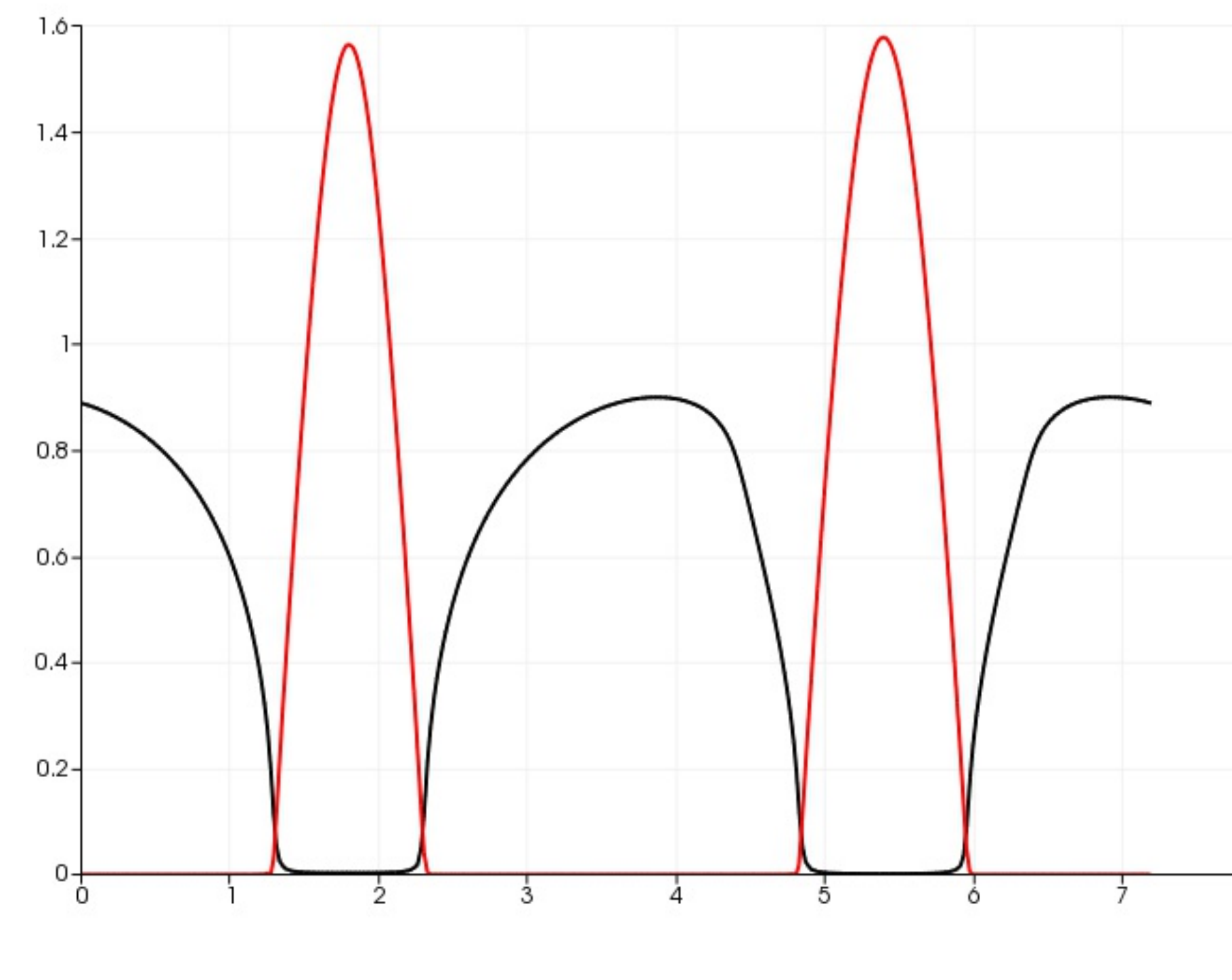}
\includegraphics[ trim = 5mm 140mm 20mm 10mm,  clip,  width=.24\textwidth
]{./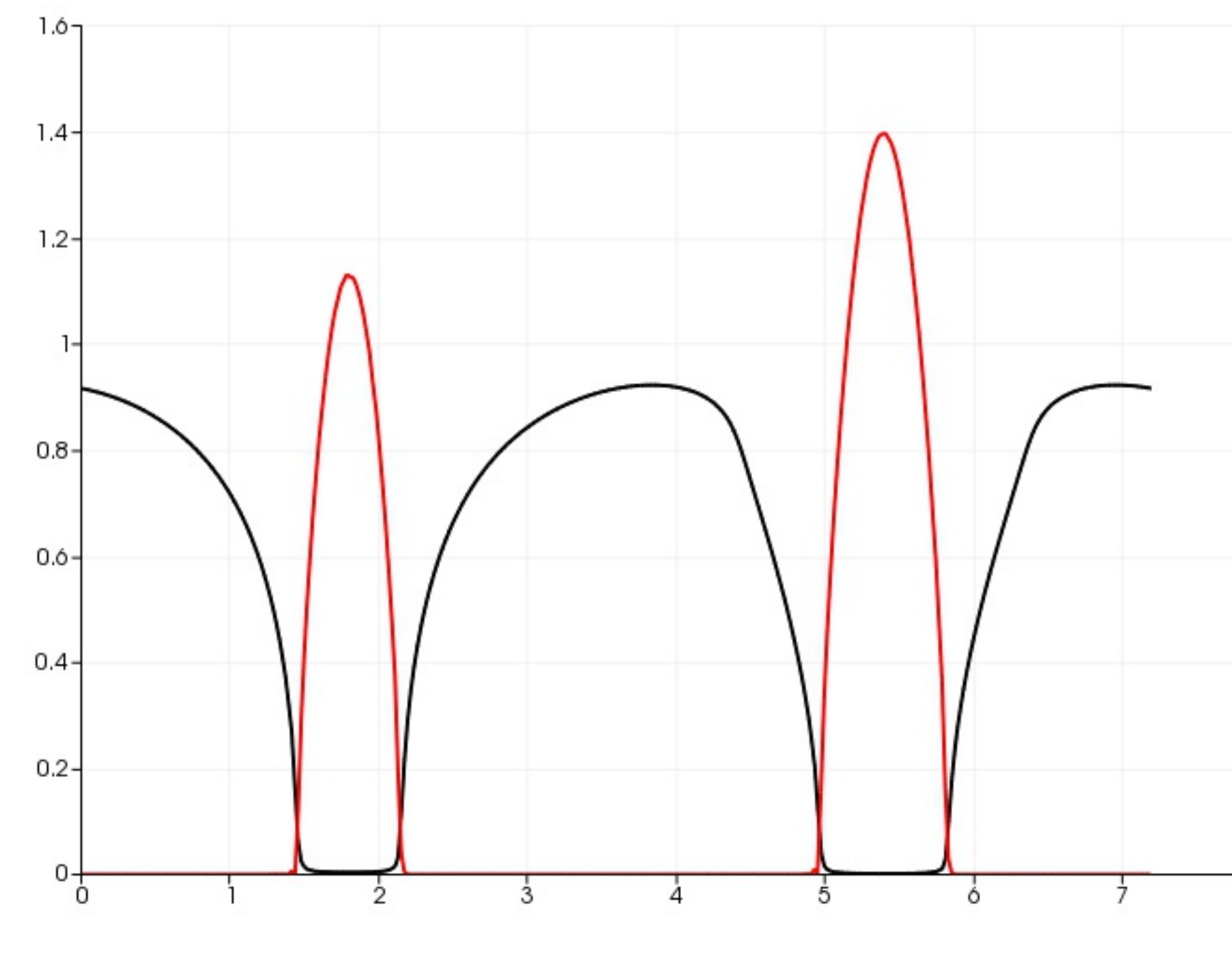}
\includegraphics[ trim = 5mm 140mm 20mm 10mm,  clip,  width=.24\textwidth
]{./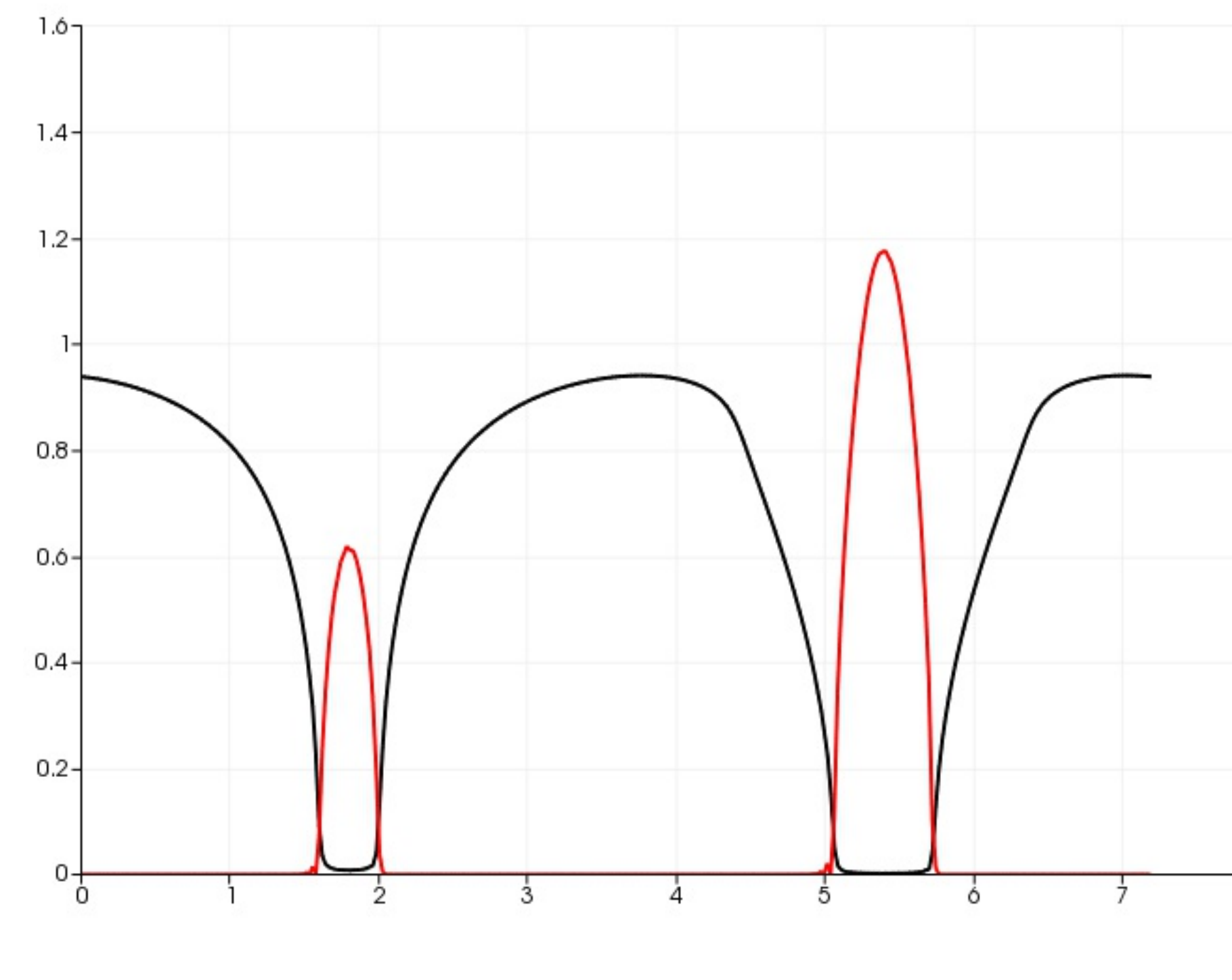}
\includegraphics[ trim = 5mm 140mm 20mm 10mm,  clip,  width=.24\textwidth
]{./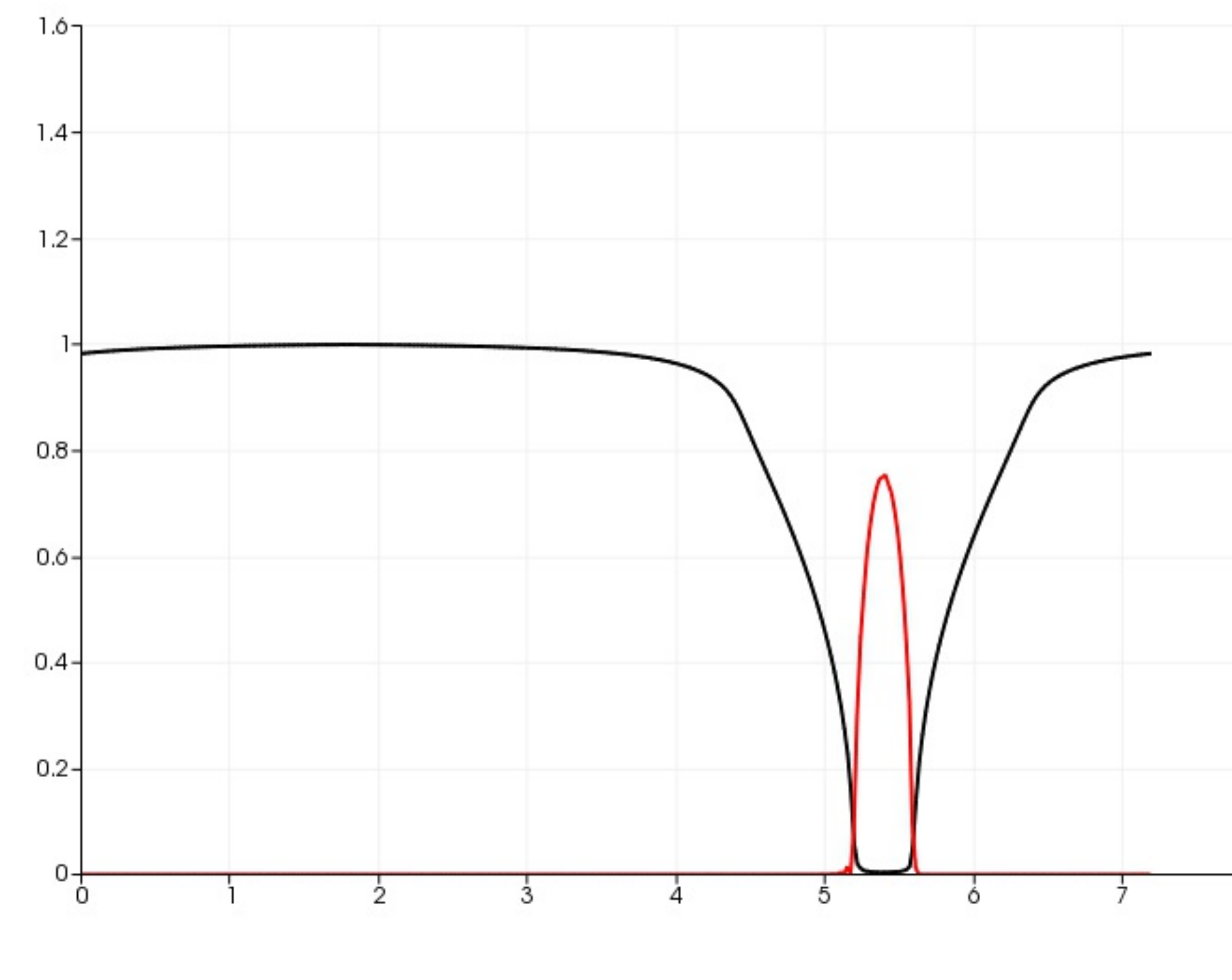}
}
\subfigure[][{$u^n=(z^{t_n}-z^{t_n-0.01})/0.01, \quad w^n=w^0+\nabla z^n\cdot\normal$}]{
\includegraphics[ trim = 5mm 120mm 20mm 10mm,  clip,  width=.24\textwidth
]{./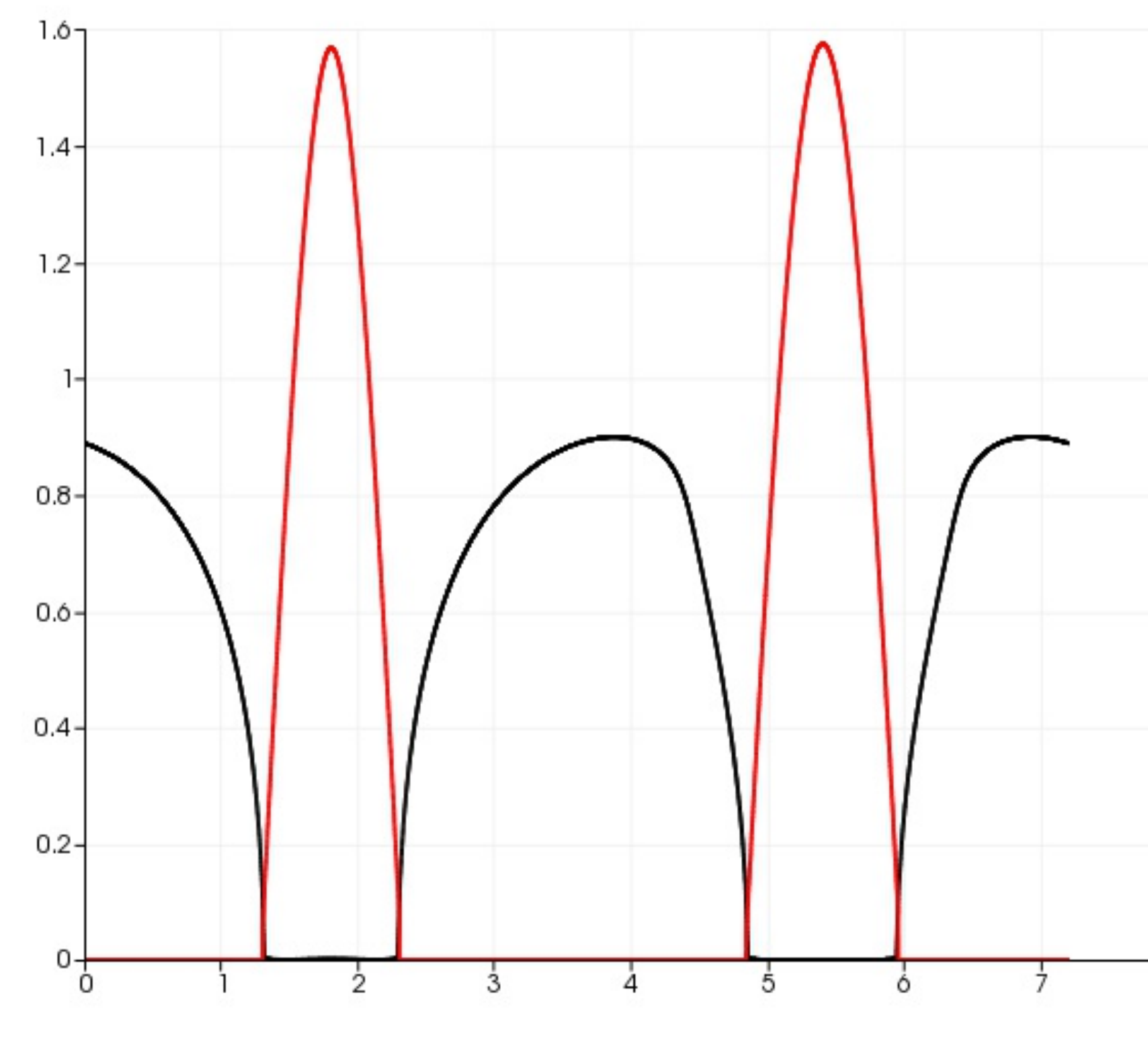}
\includegraphics[ trim = 5mm 120mm 20mm 10mm,  clip,  width=.24\textwidth
]{./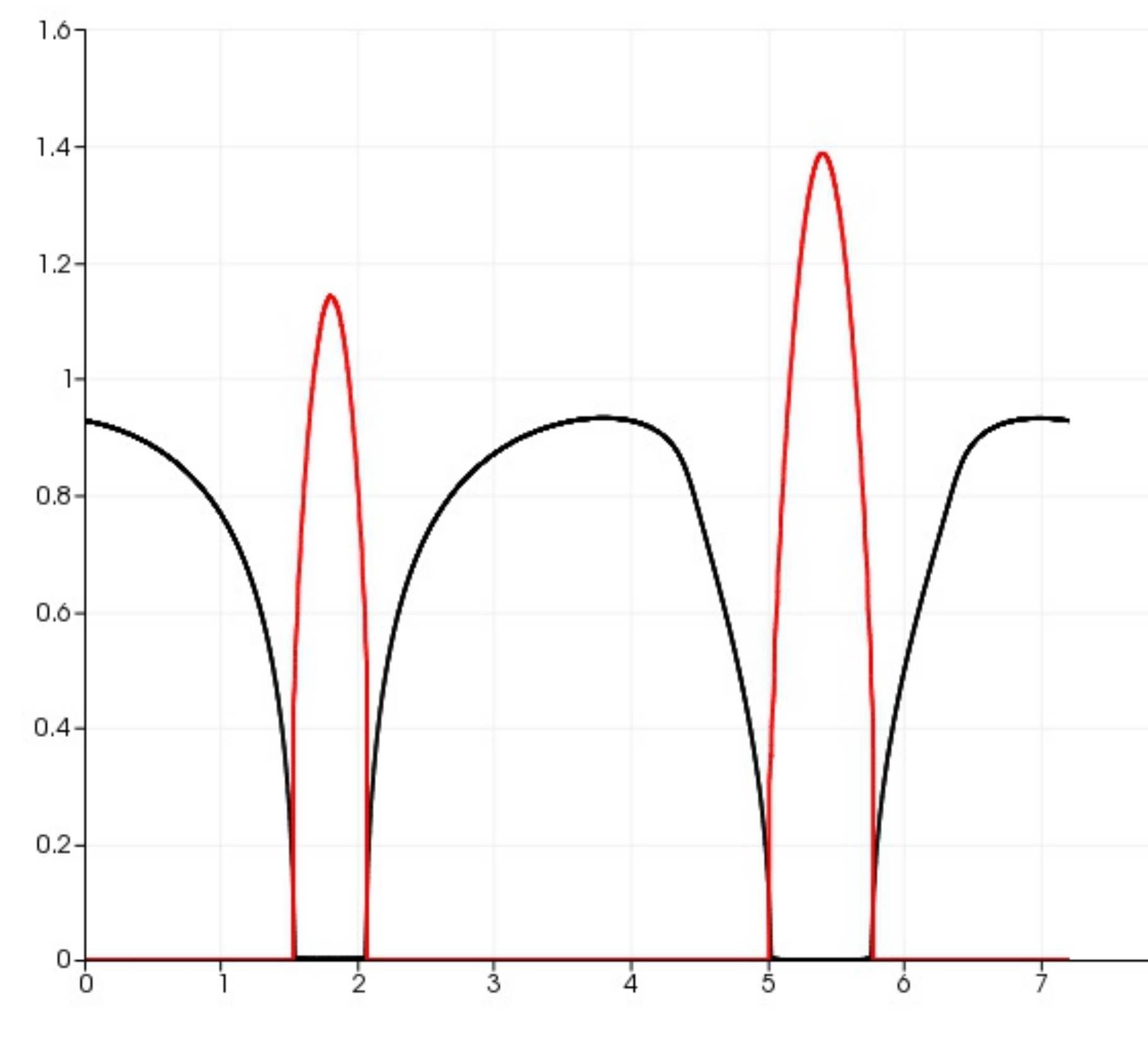}
\includegraphics[ trim = 5mm 120mm 20mm 10mm,  clip,  width=.24\textwidth
]{./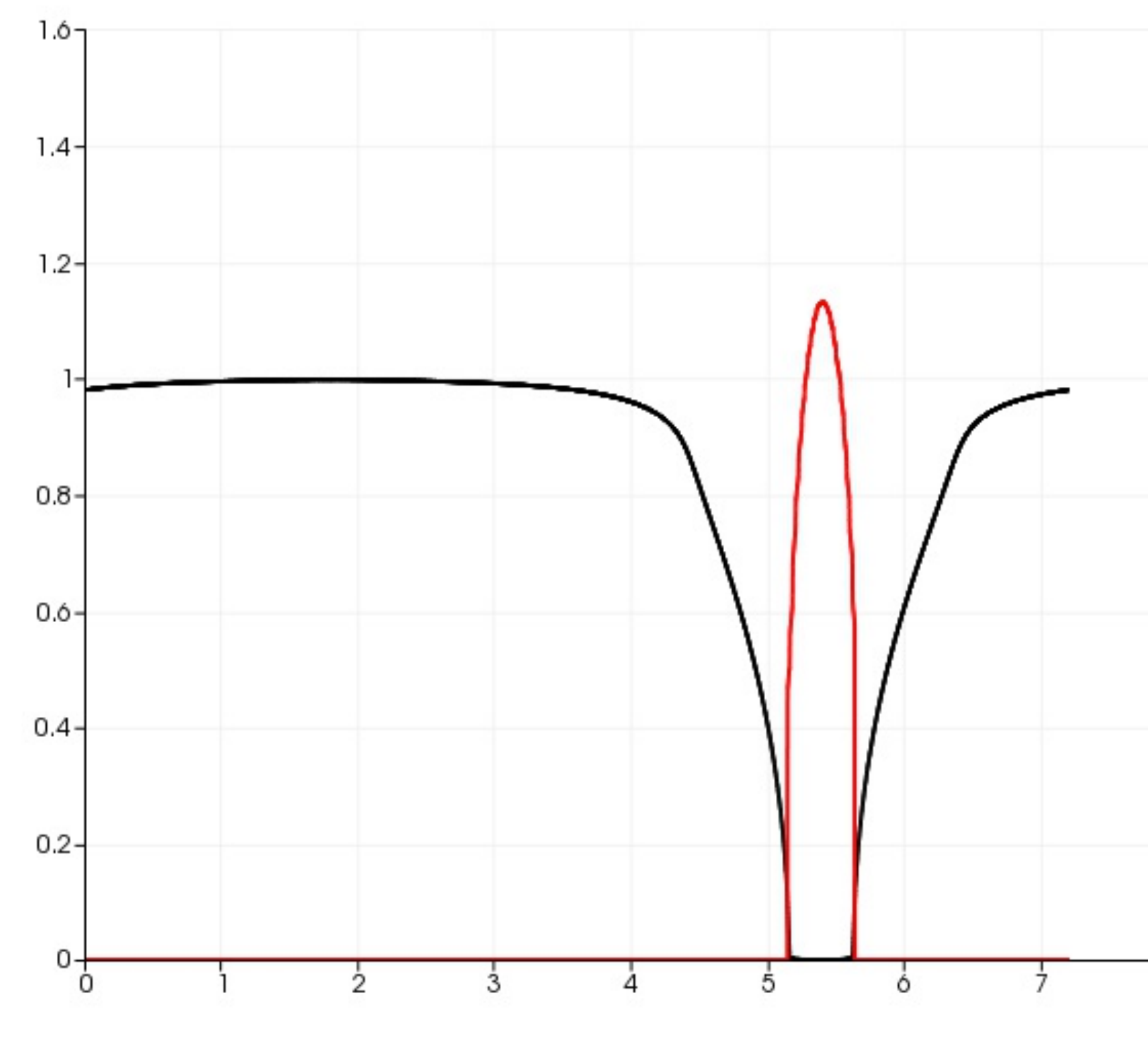}
\includegraphics[ trim = 5mm 120mm 20mm 10mm,  clip,  width=.24\textwidth
]{./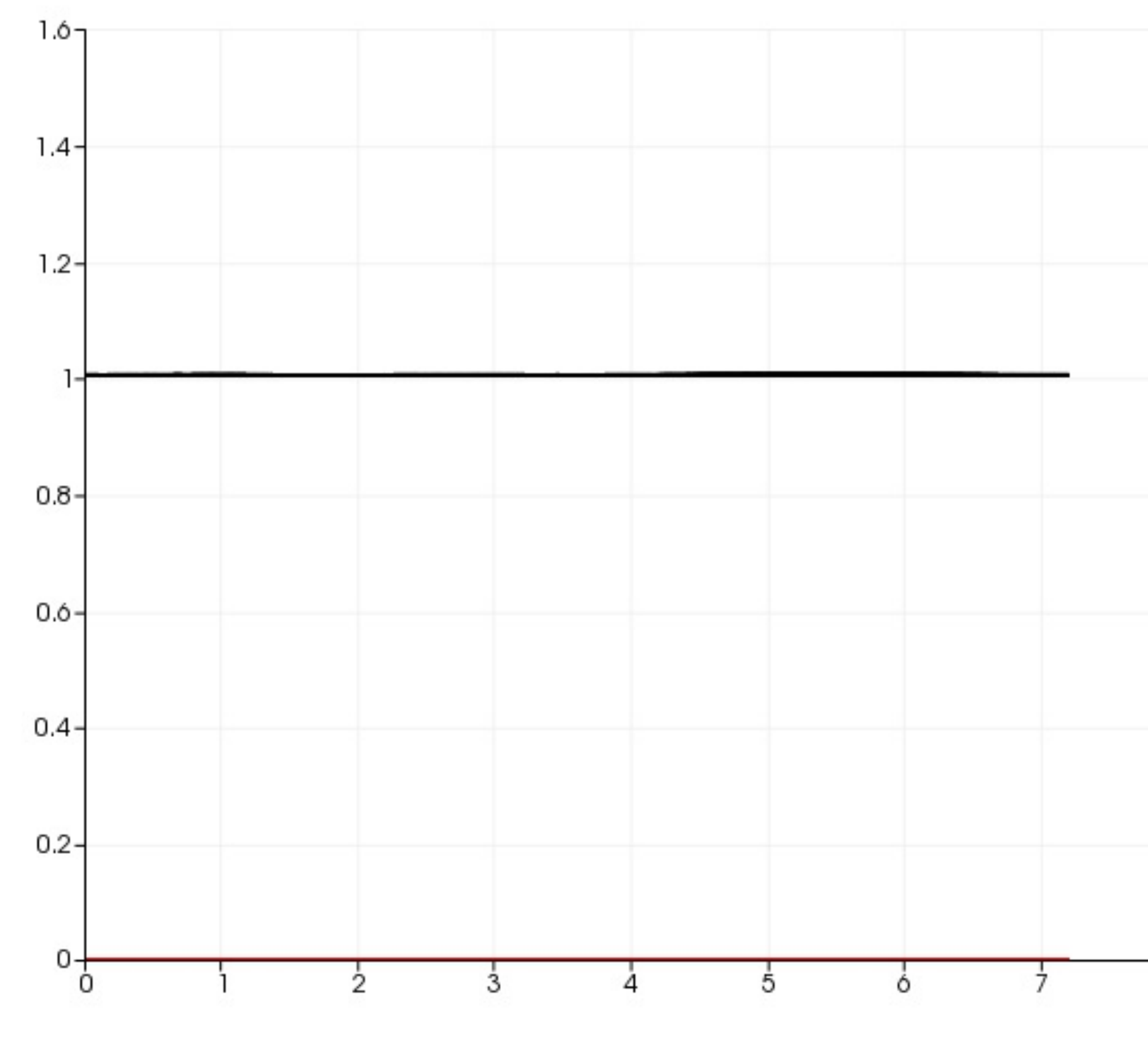}
}
\caption{Simulation results of \S \ref{sec:sim-2d}. (First three rows)  Plots of the trace of $U$ (black) and $W$ (red) of (\ref{eqn:wf_eps_problem}) over $\G_h$ at times $0.01, 0.2, 0.4$ and $0.7$ (reading from left to right) for different values of $\eps=\delO=\delG$. The fourth row shows plots of the trace of $U$ (black)  and $W=-V$ (red) post-processed from solving the elliptic variational inequality (\ref{eqn:EVI}) at times $0.01, 0.2, 0.4$ and $0.7$ reading from left to right.}\label{fig:2d-plot-uv}
\end{figure}

{In order to support our assertion that the changes observed in Figures \ref{fig:2d-res-uv} and \ref{fig:2d-plot-uv} are due to the changes in $\eps$ and not due to insufficient numerical resolution,  in Appendix \ref{sec:appendix} we investigate numerically the effect of 
the discretisation parameters, specifically the mesh-szie and the timestep, on the numerical solution. The results of   Appendix \ref{sec:appendix} illustrate that the large qualitative changes observed on reducing $\eps$ are due to the changing parameter rather than issues with numerical resolution.
}

\subsection{3D simulations}\label{sec:sim-3d}
We conclude this section with some 3D simulations. We set $\partial_0\O=\{\vec x\in\Reals^3\vert \lv \vec x\rv=2\}$, i.e., the surface of the sphere of radius two centred at the origin  and define the surface of the cell $\G$ by the level set function $\G=\{\vec x\in\Reals^3\vert (x_1+0.2-x_2^2)^2+4x_3^2+x_2^2-1=0\}$.  We generated a triangulation of the bulk domain (and the corresponding induced surface triangulation) using \Program{CGAL} \citep{cgal:ry-smg-13b}. We used a bulk mesh with 11167 DOFs and the induced surface triangulation had 2449 DOFs for the simulation of (\ref{eqn:wf_eps_problem}) whilst for the simulation of (\ref{eqn:EVI}) we used a finer mesh with 60583 bulk DOFs and  15169 surface DOFs. Figure \ref{fig:3d_triang} shows the computational domain used for all the simulation of (\ref{eqn:wf_eps_problem}). 

\begin{figure}
\includegraphics[trim = 10mm 150mm 10mm 10mm,  clip, width=0.32\textwidth]{./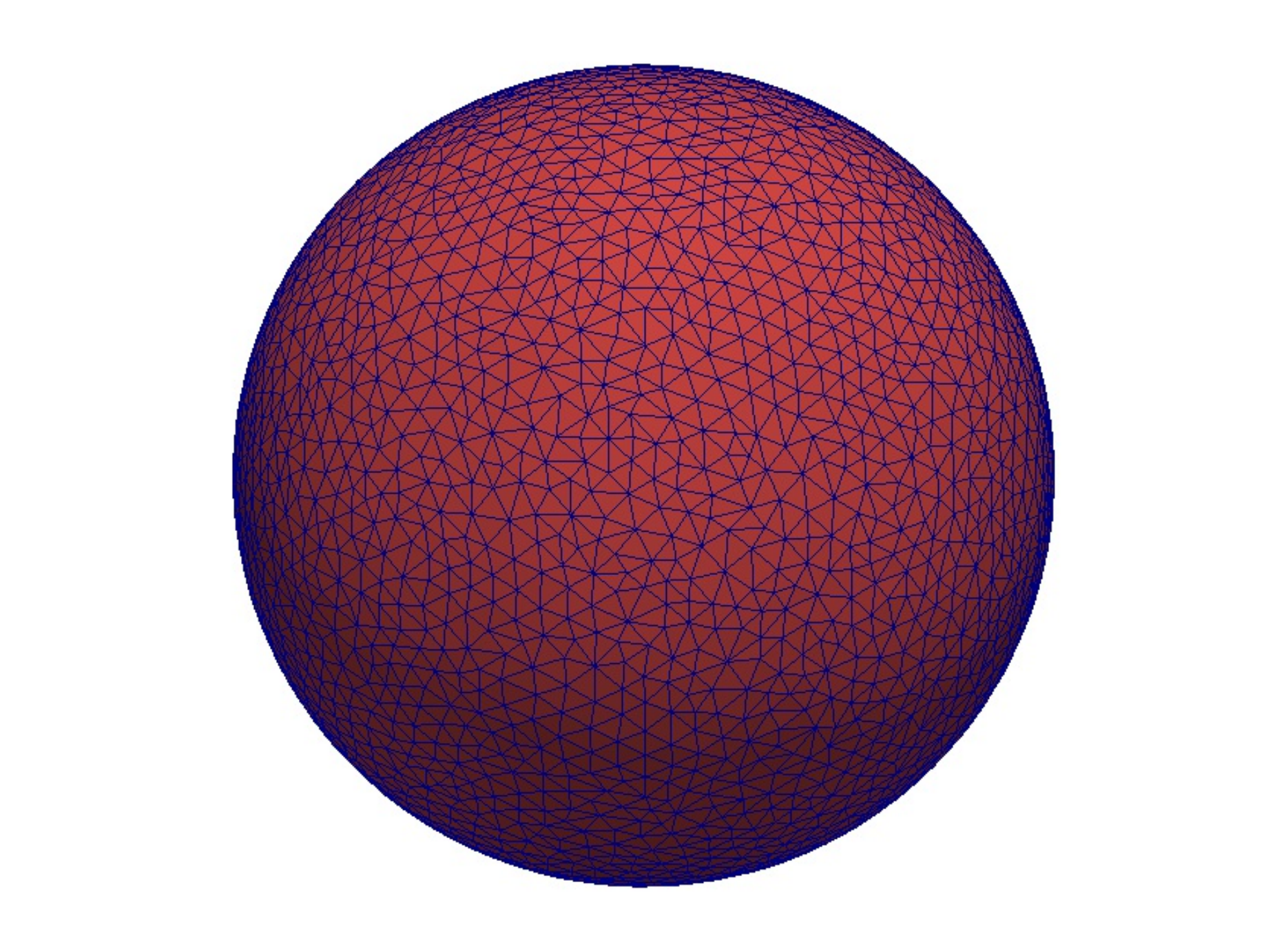}
\includegraphics[trim = 10mm 150mm 10mm 10mm,  clip, width=0.32\textwidth]{./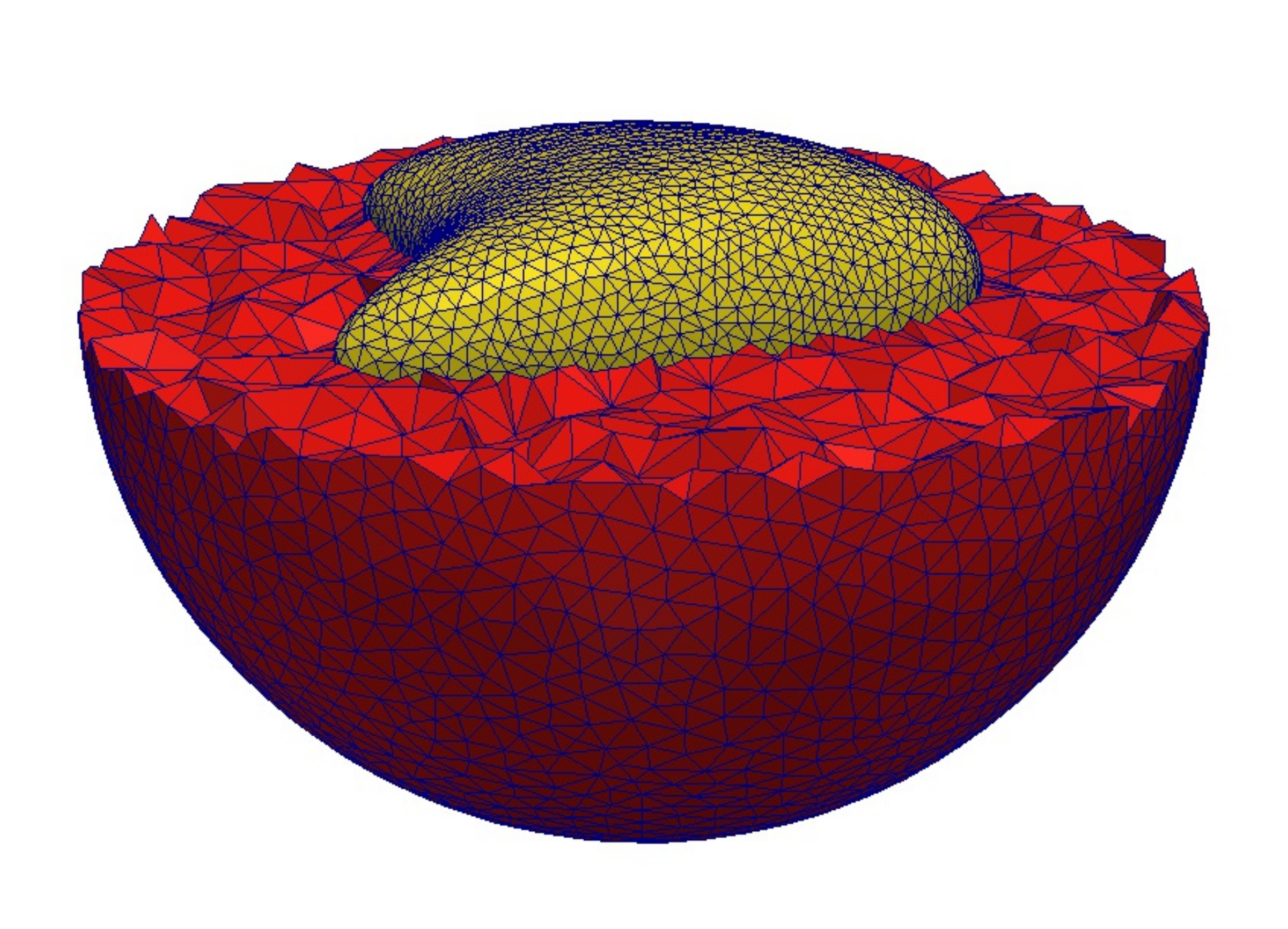}
\includegraphics[trim = 10mm 150mm 10mm 10mm,  clip, width=0.32\textwidth]{./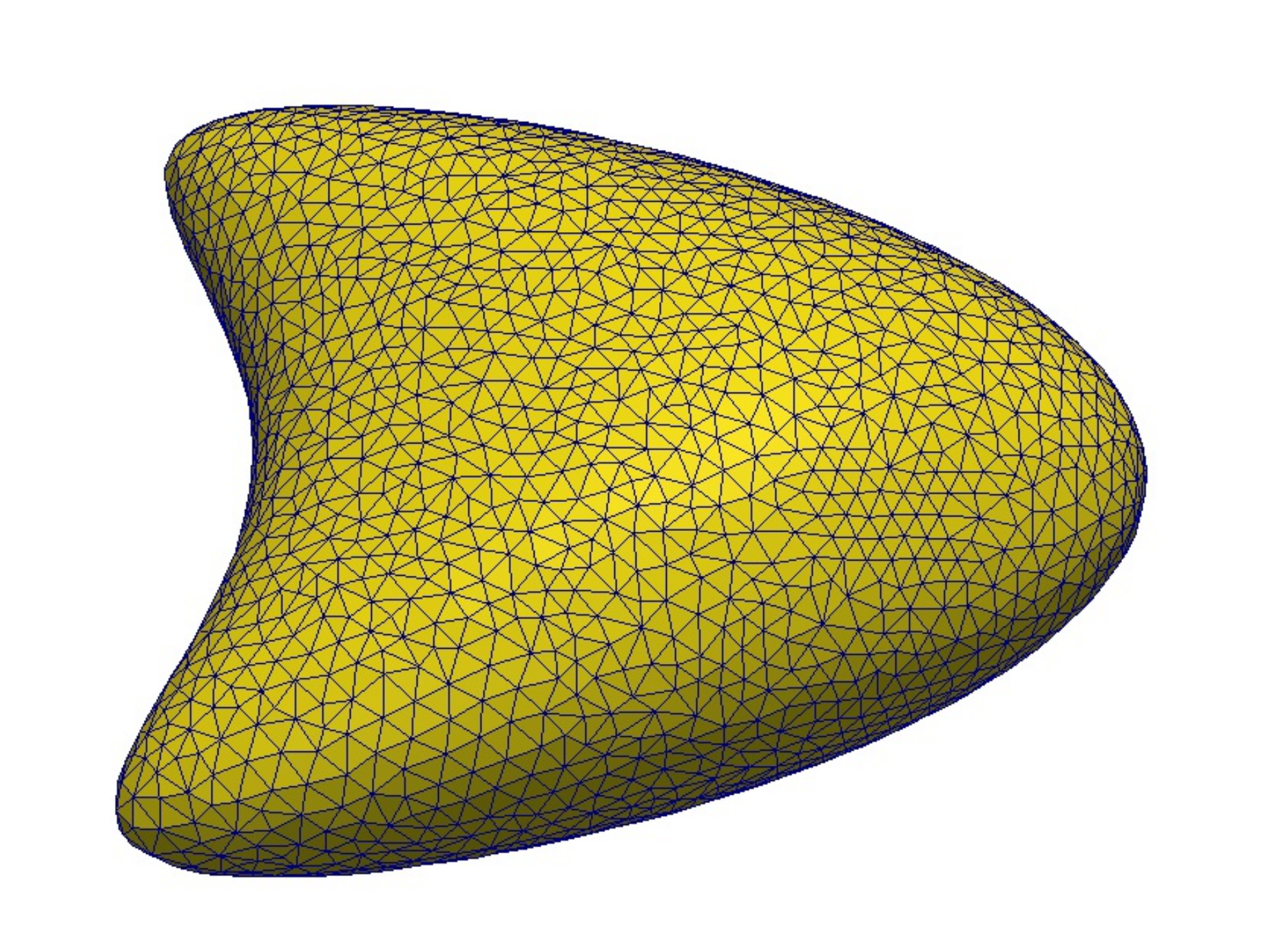}
\caption{The coarser computational domain used for the simulations in $3d$ of \S  \ref{sec:sim-3d}, generated using \Program{CGAL} \citep{cgal:ry-smg-13b}. The left figure shows the outer boundary of the bulk triangulation, the middle figure shows a the bulk triangulation with elements with their barycenters in the top half ($x_3>0$) removed together with the surface triangulation of the interior surface $\G_h$ and the right figure shows the triangulation of the surface $\G_h$.}
\label{fig:3d_triang}
\end{figure}

We report on the results of two simulations. We consider the approximation of (\ref{eqn:wf_eps_problem}) with $\eps=\delO=\delG=1\times10^{-2}$ and for the problem data we set $T=0.6$, $u_D=u^0=1$ and $w^0=\max(\cos(\pi x_2)+\sin(\pi x_1),0)$,  $\vec x\in\G$ and similarly to \S \ref{sec:sim-2d} we compare these results with those obtained from post-processing the solution to the elliptic variational inequality (\ref{eqn:EVI}) with $v^0=-w^0$. For the simulation of (\ref{eqn:wf_eps_problem}) we used a fixed uniform time step of $1\times10^{-6}$.  Snapshots of the solution $Z$ to (\ref{eqn:EVI}) at a series of distinct times is shown in Figure \ref{fig:3d-res-z}. As previously, to post-process $U^{t_m}:=(Z^{t_m}-Z^{t_m-\tau})/\tau$ we solve (\ref{eqn:EVI}) at  $t_m$ and $t_m-\tau$ fixing $\tau=0.01$. Figure \ref{fig:3d-res-uv} shows snapshots of the simulated $U$ and $W$. Analogous behaviour to the 2D case of \S \ref{sec:sim-2d} is observed.  We note that the solution of $Z$  shown in Figure \ref{fig:3d-res-z} appears quite smooth and the rough nature of the post-processed $U$ and $W$ may be an artefact of the post-processing together with the slice through the bulk triangulation taken for visualisation purposes. As noted in \S \ref{sec:var-ineq}, the elliptic variational inequality is a reformulation of the Hele-Shaw free boundary problem 
on the surface $\G$ with the differential operator now the half-Laplacian rather than the usual Laplacian  (Laplace-Beltrami).
We therefore conclude the numerical results section with Figure \ref{fig:3d-res-FB} which shows the evolution of the approximated free boundary on the surface $\G_h$. We approximate the position of the free boundary by plotting the level curve of the set where the trace of $Z=5\times10^{-3}$ at a series of times. 
\begin{figure}[htbp!]
\begin{minipage}{0.06\linewidth}
\includegraphics[ trim = 0mm 0mm 0mm 0mm,  clip,  width=\textwidth
]{./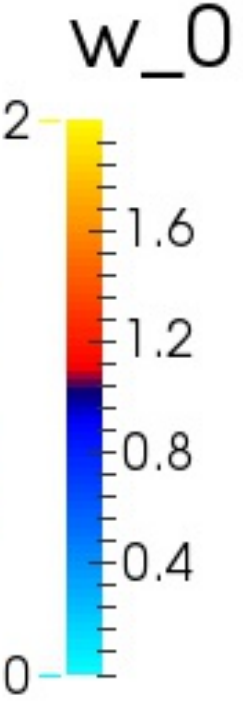}
\end{minipage}
\hskip .2em
\centering
\begin{minipage}{0.92\linewidth}
\includegraphics[ trim = 0mm 20mm 0mm 10mm,  clip,  width=0.24\textwidth
]{./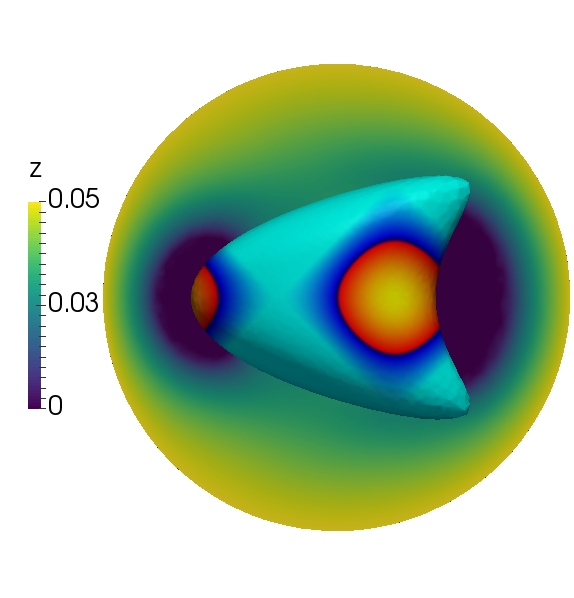}
\includegraphics[ trim = 0mm 20mm 0mm 10mm,  clip,  width=0.24\textwidth
]{./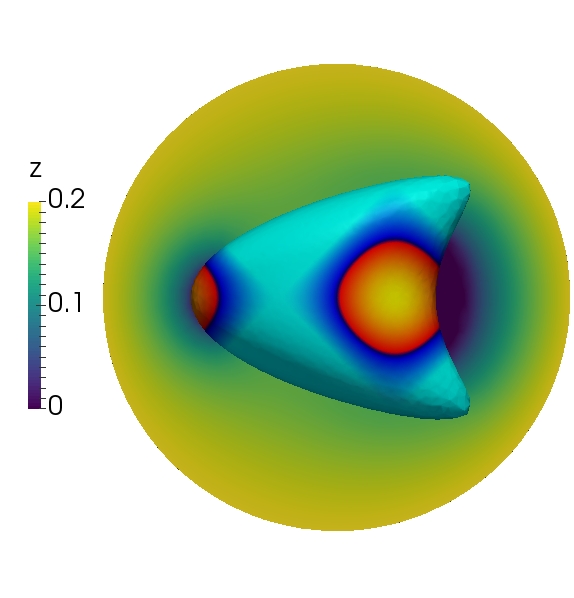}
\includegraphics[ trim = 0mm 20mm 0mm 10mm,  clip,  width=0.24\textwidth
]{./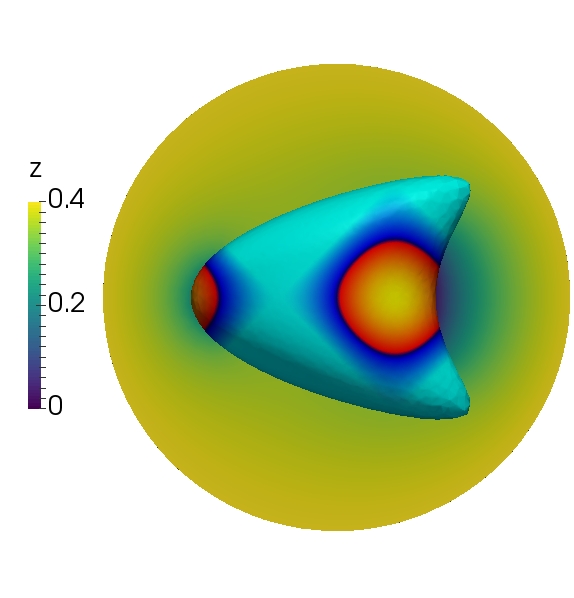}
\includegraphics[ trim = 0mm 20mm 0mm 10mm,  clip,  width=0.24\textwidth
]{./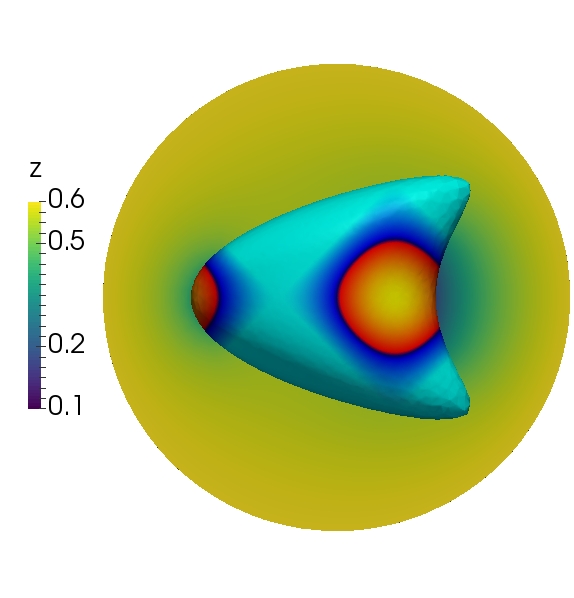}
\end{minipage}
\caption{Simulation results of \S \ref{sec:sim-3d}. Snapshots of the computed solution $Z$ together with the initial data  $W^0$ of the elliptic variational inequality (\ref{eqn:EVI}) at times $0.05, 0.2, 0.4$ and $0.6$ reading from left to right. The colour scale for $W^0$ is fixed in every figure. For visualisation, we have hidden the top half of the bulk domain (points with $x_3>0$).  }\label{fig:3d-res-z}
\end{figure}
\begin{figure}[htbp!]
\begin{minipage}{0.1\linewidth}
\includegraphics[ trim = 0mm 0mm 0mm 0mm,  clip,  width=\textwidth
]{./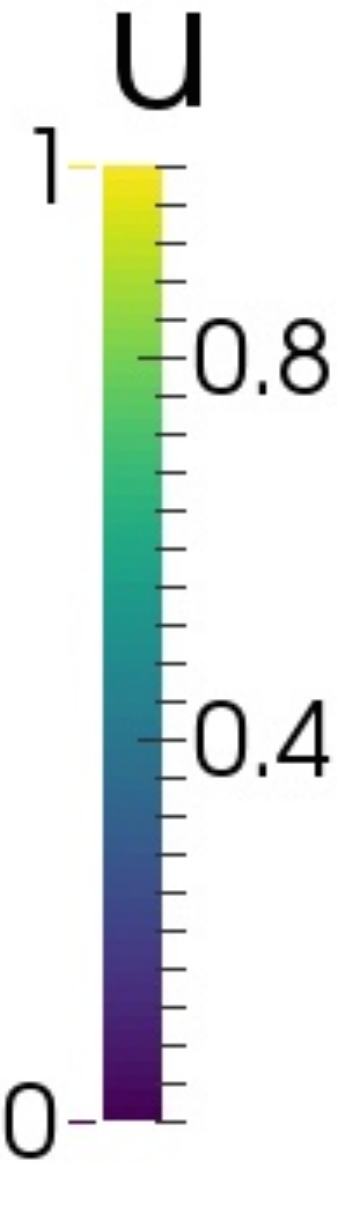}
\end{minipage}
\hskip .5em
\begin{minipage}{0.75\linewidth}
\centering
\subfigure[][{$\eps=0.01$}]{
\includegraphics[ trim = 0mm 0mm 0mm 0mm,  clip,  width=0.24\textwidth
]{./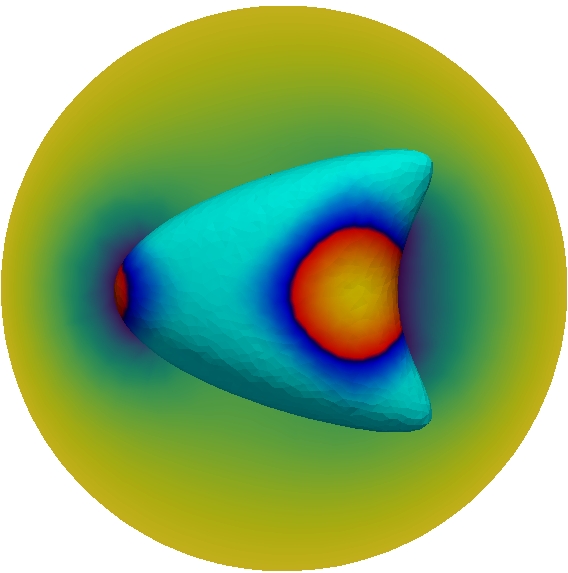}
\includegraphics[ trim = 0mm 0mm 0mm 0mm,  clip,  width=0.24\textwidth
]{./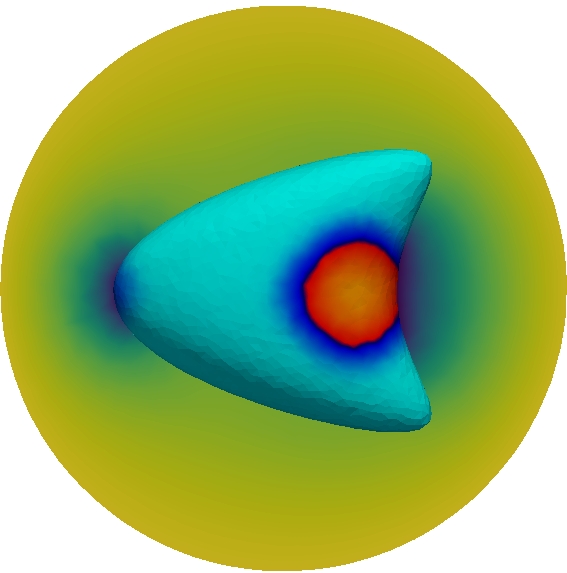}
\includegraphics[ trim = 0mm 0mm 0mm 0mm,  clip,  width=0.24\textwidth
]{./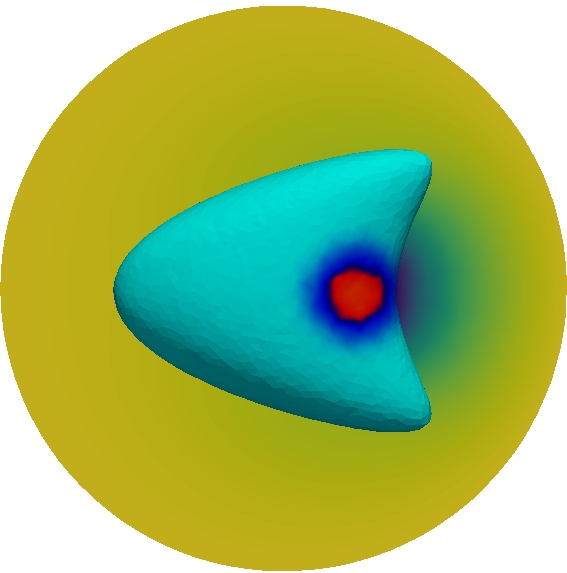}
\includegraphics[ trim = 0mm 0mm 0mm 0mm,  clip,  width=0.24\textwidth
]{./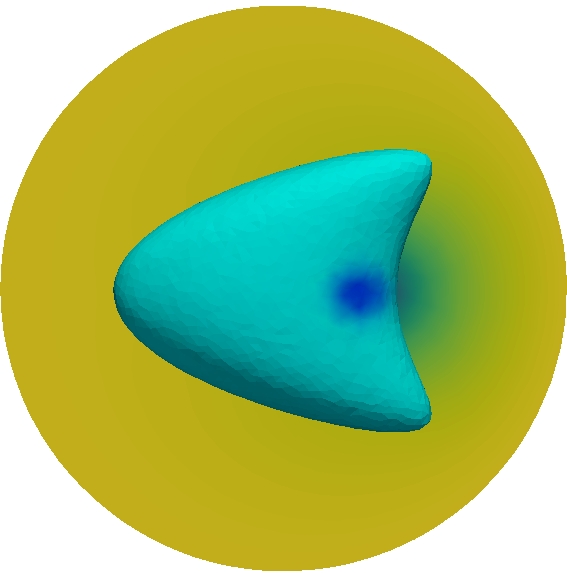}
}
\subfigure[][{$u^m=(z^{t_m}-z^{t_m-0.01})/0.01, \quad w^m=w^0+\nabla z^m\cdot\normal$}]{
\includegraphics[ trim = 0mm 0mm 0mm 0mm,  clip,  width=0.24\textwidth
]{./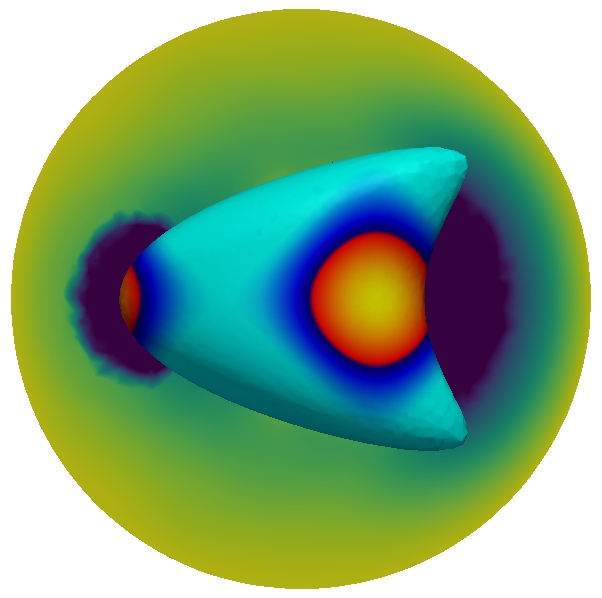}
\includegraphics[ trim = 0mm 0mm 0mm 0mm,  clip,  width=0.24\textwidth
]{./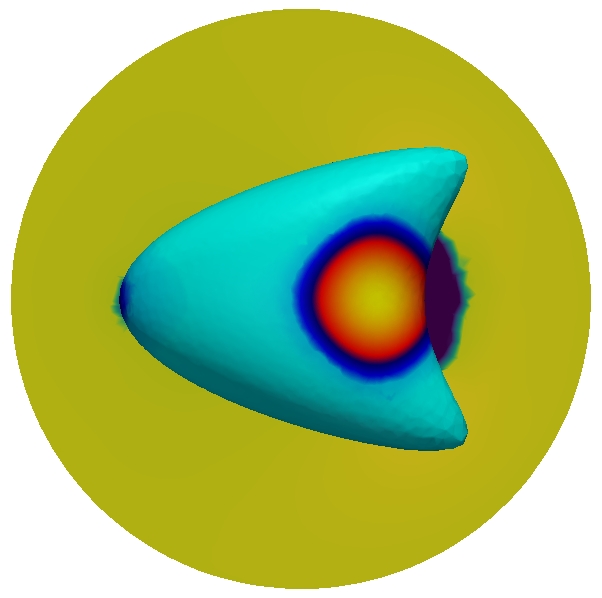}
\includegraphics[ trim = 0mm 0mm 0mm 0mm,  clip,  width=0.24\textwidth
]{./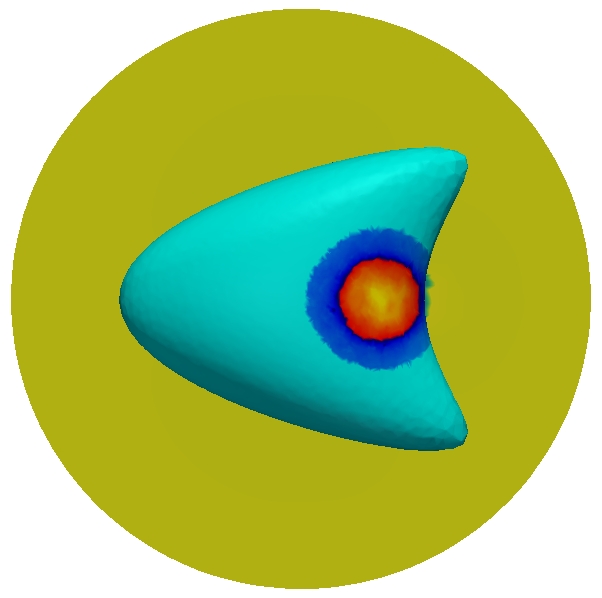}
\includegraphics[ trim = 0mm 0mm 0mm 0mm,  clip,  width=0.24\textwidth
]{./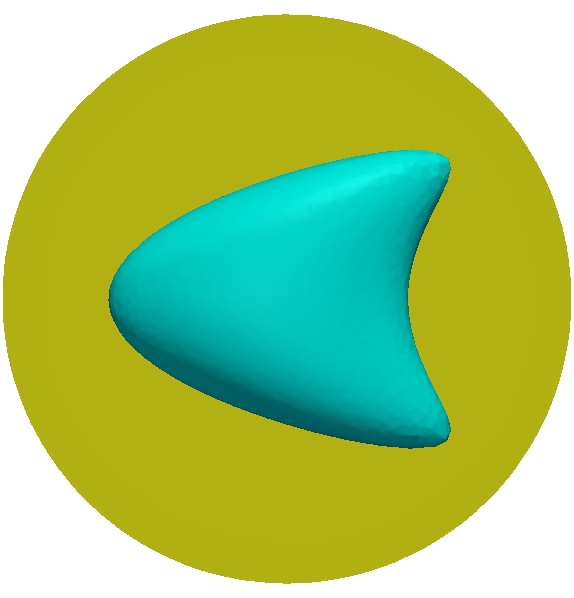}
}
\end{minipage}
\hskip .5em
\begin{minipage}{0.1\linewidth}
\includegraphics[ trim = 0mm 0mm 0mm 0mm,  clip,  width=\textwidth
]{./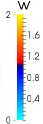}
\end{minipage}
\caption{Simulation results of \S \ref{sec:sim-3d}. Top row, snapshots of the computed solutions $U$ and $W$ of (\ref{eqn:wf_eps_problem}) in 3D at times $0.05, 0.2, 0.4$ and $0.6$ (reading from left to right) for $\eps=\delO=\delG=0.01$ on a coarser mesh. Bottom row, the computed solutions $U$ and $W=-V$ post-processed from solving the elliptic variational inequality (\ref{eqn:EVI}) at times $0.05, 0.2, 0.4$ and $0.6$ reading from left to right on a finer mesh. For visualisation, we have hidden the top half of the bulk domain (points with $x_3>0$).  }\label{fig:3d-res-uv}
\end{figure}
\begin{figure}[htbp!]
\centering
\includegraphics[ trim = 10mm 120mm 10mm 0mm,  clip,  width=0.5\textwidth
]{./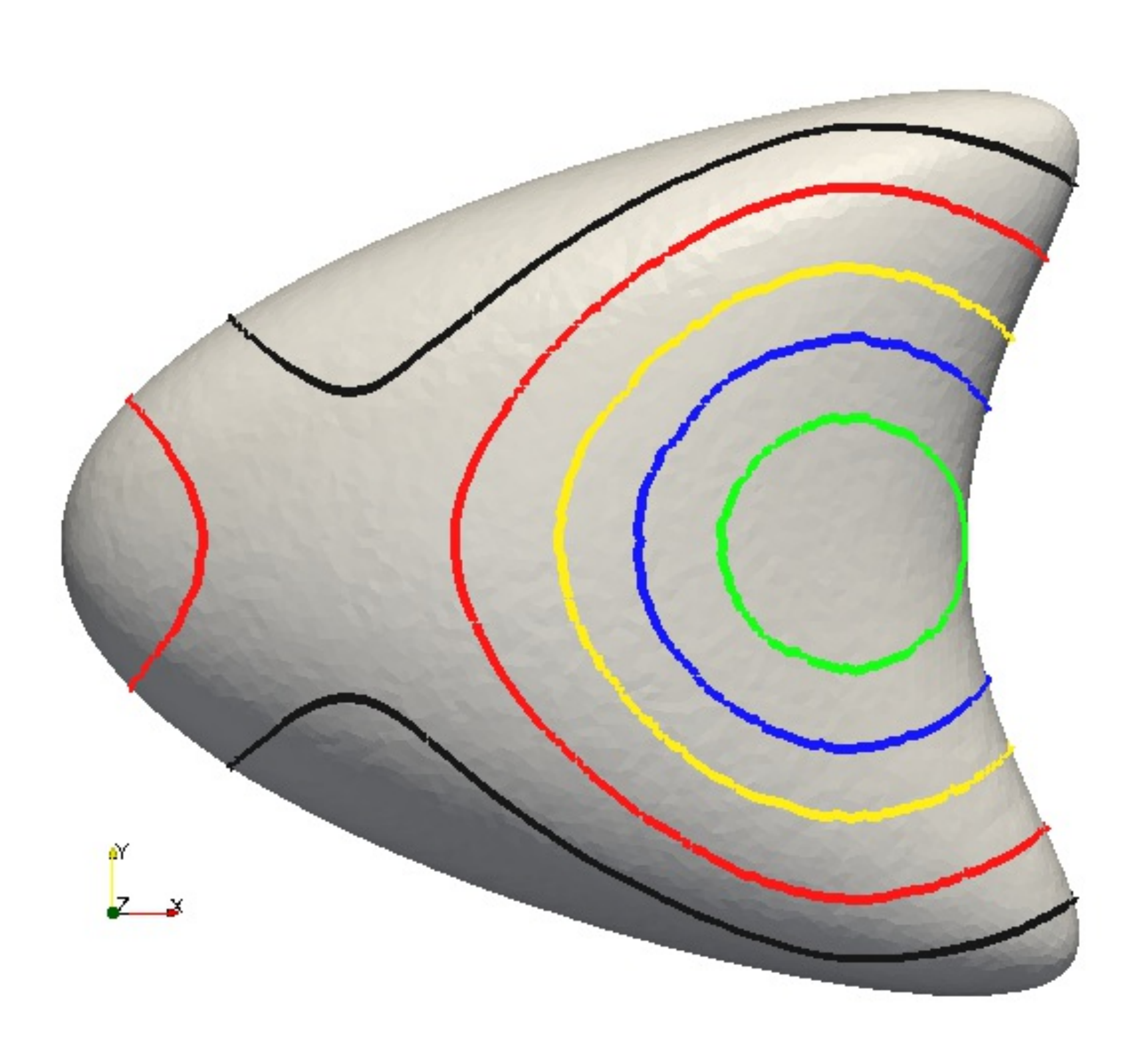}
\caption{Simulation results of \S \ref{sec:sim-3d}. Snapshots of the level curve on which the trace of $Z=5\times 10^{-3}$
that approximates the free boundary in the elliptic variational inequality (\ref{eqn:EVI}) and thus the surface Hele-Shaw problem (\ref{eqn:ell_VI_DtN}) at times $0.05$ (black), $0.15$ (red), $0.25$ (yellow), $0.35$ (blue) and $0.45$ (green).}\label{fig:3d-res-FB}
\end{figure}

\section{Conclusion}\label{sec:conc}
In this work we developed a well-posedness theory for a system of coupled bulk-surface PDEs with nonlinear coupling. The system under consideration arises naturally as a simplification of models for receptor-ligand dynamics in cell biology and hence developing a rigorous mathematical framework for the treatment of such systems is an important task due to their widespread use in modelling and computational studies, e.g.,  \citep{garcia2013mathematical, levine2005membrane, MadChuVen14,bao2014well}. Whilst the model we consider (\ref{eq:eps-problem}) is a simplified model problem,  the nonlinear coupling between the bulk and surface species is preserved and this is expected to be the main difficulty in the mathematical understanding of more biologically complex models of  receptor-ligand interactions. Thus our techniques should be applicable to many of the models derived and simulated in the literature. 

On non-dimensionalisation of the model using experimentally estimated parameter values, we identified three biologically meaningful asymptotic (small-parameter) limits of the model. We present a rigorous derivation of the limiting problems which correspond to free boundary problems on the surface of the cell and we demonstrated the well-posedness of the free boundary problems. Moreover, we discussed connections between the different free boundary problems and classical free boundary problems, namely the one-phase Stefan problem and the Hele-Shaw  problem. This perspective gives rise to the possibility of using these ideas when constructing receptor-ligand models with other mechanisms.

Finally, we reported on numerical simulations of the original problem (\ref{eq:eps-problem}) and a suitable reformulation of the elliptic limiting problem obtained when one considers fast reaction, slow surface diffusion and fast bulk diffusion. The simulation results illustrated the convergence towards the limiting problem thereby supporting our theoretical findings. We note that the reformulated problem is considerably cheaper to solve computationally. Hence in a biological setting where one is in a parameter regime in which the limiting problem provides a good approximation to the original problem it may be preferable to solve the limiting free boundary problem rather than the original coupled system of parabolic equations.

\section*{Acknowledgements}

This work was started whilst the authors were participants in the Isaac Newton Institute programme: ``Free Boundary Problems and Related Topics'' and finalised whilst the authors were participants in the Isaac Newton Institute programme: ``Coupling Geometric PDEs with Physics for Cell Morphology, Motility and Pattern Formation'' supported by EPSRC Grant Number EP/K032208/1. The work of CV received support from the Leverhulme Trust Research Project Grant (RPG-2014-149). {The authors would like to acknowledge the anonymous reviewers whose comments improved the manuscript.}


\bibliography{./ERV_RL.bib}

{{
\appendix
\section{Numerical investigation of the influence of the mesh-size and timestep}\label{sec:appendix}
In order to verify that the results of \S \ref{sec:sim-2d} are due to changes in the parameter $\eps$ rather than the discretisation parameters,
we now carry out the numerical experiment of \S   \ref{sec:sim-2d} on a series of different meshes with different timesteps. Specifically, we consider a coarse triangulation of the domain considered in \S \ref{sec:sim-2d} and two finer triangulations generated by refining the coarse triangulation. 
The triangulations had $376, 1369$ and $5206$ bulk degrees of freedom respectively and the corresponding surface triangulations
had $106, 212$ and $424$ degrees of freedom. Figure \ref{fig:3_triangs} shows the three meshes.

\begin{figure}
\includegraphics[trim = 0mm 0mm 0mm 0mm,  clip, width=0.32\textwidth]{./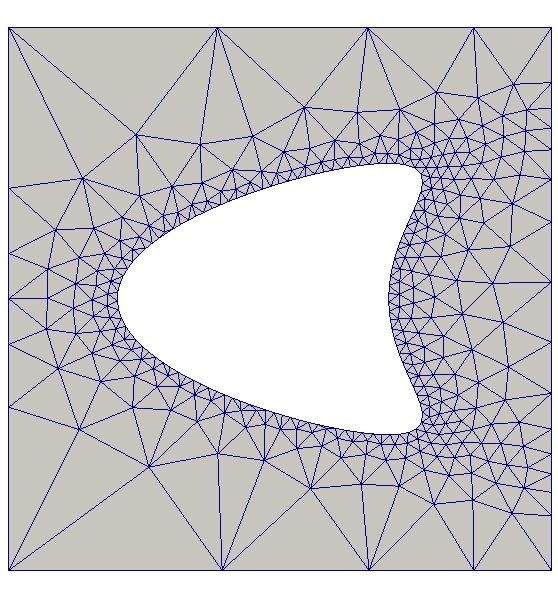}
\includegraphics[trim = 0mm 0mm 0mm 0mm,  clip, width=0.32\textwidth]{./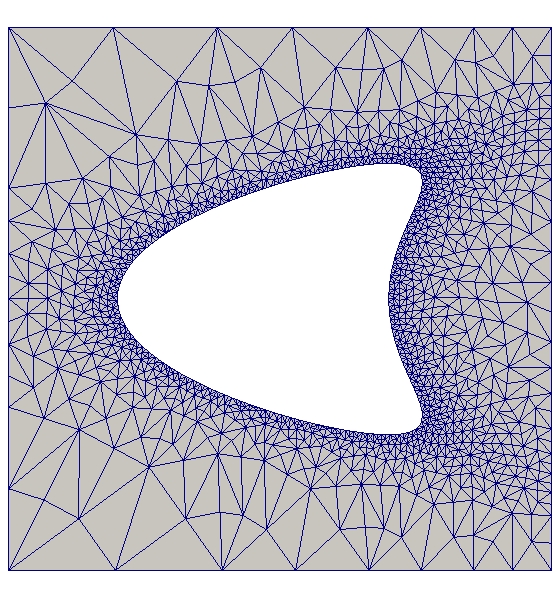}
\includegraphics[trim = 0mm 0mm 0mm 0mm,  clip, width=0.32\textwidth]{./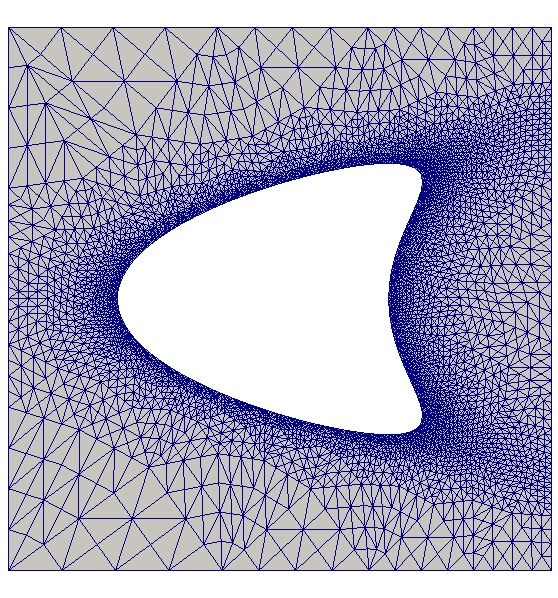}
\caption{Meshes used for the simulations of $\S$ \ref{sec:appendix}. A coarse mesh (left) and two finer meshes generated by globally bisecting  the elements of the coarse mesh twice (middle) and four times (right).}
\label{fig:3_triangs}
\end{figure}

For the simulations we solved \eqref{eqn:wf_eps_problem} with the same initial conditions and final time of \S \ref{sec:sim-2d} with $\eps=\delk=\delO=\delG=0.1$ and $0.01$. For the smaller value of $\eps=0.001$ considered in \S \ref{sec:sim-2d} the numerical scheme was unstable for significantly larger timesteps than that employed in  \S \ref{sec:sim-2d}. We set the timestep to be $2\times 10^{-6}$, $1\times 10^{-6}$ and  
$5\times 10^{-7}$ for the coarse, medium and fine mesh simulations respectively. 

\begin{figure}
\begin{minipage}{0.1\linewidth}
\includegraphics[ trim = 0mm 0mm 0mm 0mm,  clip,  width=\textwidth
]{./Figures/Paper_Figures/3d/uscale.pdf}
\end{minipage}
\begin{minipage}{0.75\linewidth}
\subfigure[][{$\eps=0.1$}]{
\includegraphics[trim = 0mm 0mm 0mm 0mm,  clip, width=\textwidth]{./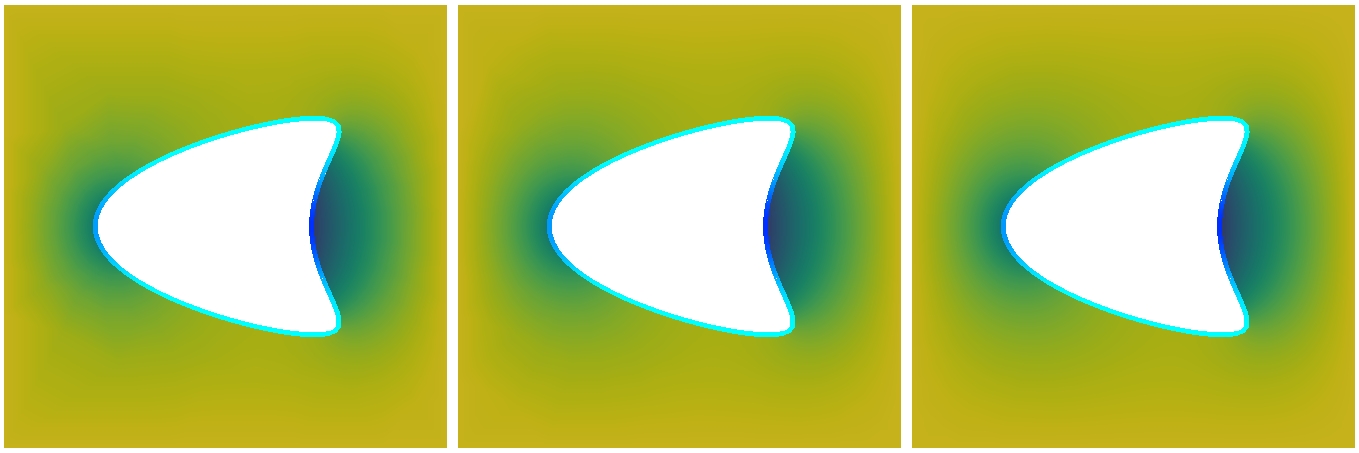}
}
\subfigure[][{$\eps=0.01$}]{
\includegraphics[trim = 0mm 0mm 0mm 0mm,  clip, width=\textwidth]{./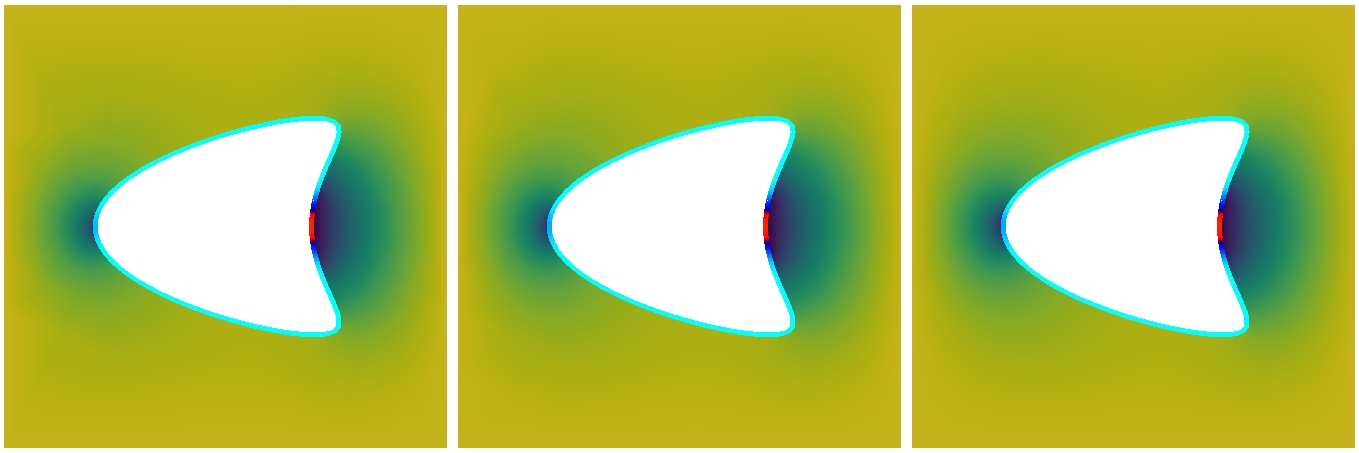}
}
\end{minipage}
\begin{minipage}{0.1\linewidth}
\includegraphics[ trim = 0mm 0mm 0mm 0mm,  clip,  width=\textwidth
]{./Figures/Paper_Figures/3d/wscale.pdf}
\end{minipage}
\caption{Snapshots of the numerical solution of $U$ and $W$ for the experiments of Appendix \ref{sec:appendix} at $t=0.5$ on the coarse mesh with large timestep (left), the twice globally refined mesh with medium timestep (middle) and the fine mesh (four times globally refined) with small timestep (right). }
\label{fig:3_sims}
\end{figure}

\begin{figure}
\centering
\subfigure[][$\Lp{2}(\O)$ norm of the difference between fine mesh $U$ and coarse mesh $U$ (blue) and fine mesh $U$ and medium mesh $U$ (purple), for $\eps=0.1$.]{
\includegraphics[trim = 0mm 0mm 0mm 0mm,  clip, width=0.45\textwidth]{./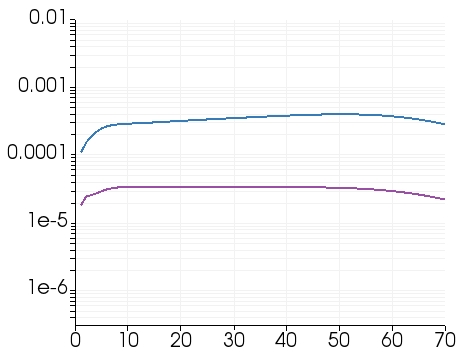}
}
\hskip 1em
\subfigure[][$\Lp{2}(\O)$ norm of the difference between fine mesh $U$ and coarse mesh $U$ (green) and fine mesh $U$ and medium mesh $U$ (red), for $\eps=0.01$.]{
\includegraphics[trim = 0mm 0mm 0mm 0mm,  clip, width=0.45\textwidth]{./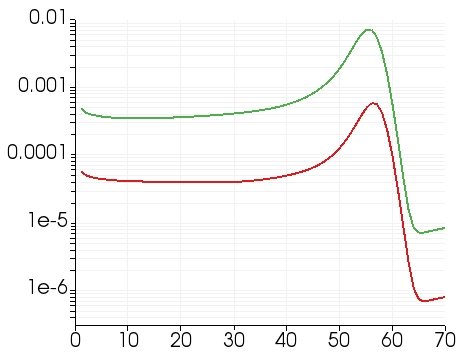}
}
\subfigure[][$\Lp{2}(\G)$ norm of the difference between fine mesh $W$ and coarse mesh $W$ (blue) and fine mesh $W$ and medium mesh $W$ (purple), for $\eps=0.1$.]{
\includegraphics[trim = 0mm 0mm 0mm 0mm,  clip, width=0.45\textwidth]{./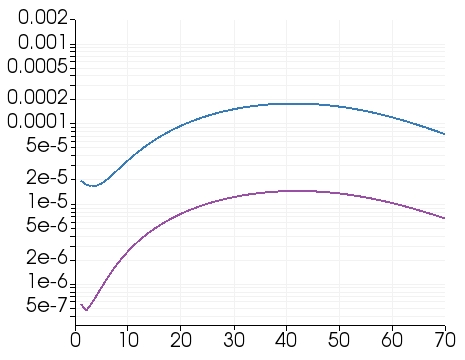}
}
\hskip 1em
\subfigure[][$\Lp{2}(\G)$ norm of the difference between fine mesh $W$ and coarse mesh $W$ (green) and fine mesh $W$ and medium mesh $W$ (red), for $\eps=0.01$.]{
\includegraphics[trim = 0mm 0mm 0mm 0mm,  clip, width=0.45\textwidth]{./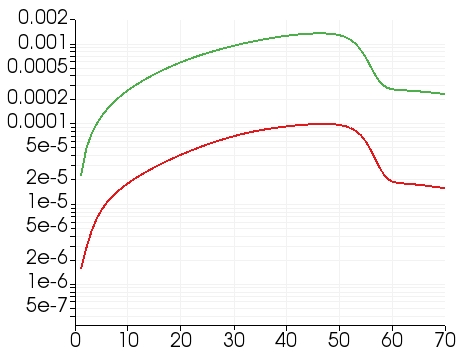}
}
\caption{Plots of the difference between the fine and coarse mesh solution and the fine and medium mesh solution.}
\label{fig:3_plots}
\end{figure}

Figure \ref{fig:3_sims} shows a snapshot of the numerical solutions at $t=0.5$ for the cases $\eps=0.1$ and $\eps=0.01$ for the three  
different numerical experiments. We observe that whilst for a fixed value of $\eps$, the qualitative features of the simulation are similar for all the different discretisation parameters under consideration, there are clear differences between the simulation results for the two different values of $\eps$.

In order to provide quantitative evidence for the convergence of the numerical solutions as the discretisation parameters are reduced, in Figure~\ref{fig:3_plots}, we plot the $\Lp{2}$ difference between the solution on the finest mesh and the solutions on the coarser meshes against time. We observe that the numerical solutions appear to converge as the discretisation parameters are refined for a fixed value of $\eps$.

}
\end{document}